\newtheorem{theorem}{Theorem}[section]
\newtheorem{lemma}[theorem]{Lemma}
\newtheorem{corollary}[theorem]{Corollary}
\newtheorem{proposition}[theorem]{Proposition}
\begin{document}
	\setlength{\parindent}{0cm}	
	\title[Ruelle's inequality and Pesin's formula]{Ruelle's inequality and Pesin's formula for Anosov geodesic flows in non-compact manifolds}
	
	\thanks{{\bf Keywords}: Anosov geodesic flow, Jacobi field, Lyapunov exponents, Ruelle's inequality, Pesin's formula.}
	
	\thanks{{\bf Mathematics Subject Classification (2010)}: 37D40, 53C20. }
	
	\author{Alexander Cantoral}
	\address{Instituto de Matemática, Universidade Federal do Rio de Janeiro, CEP 21941-909, Rio de Janeiro, Brazil}
	\email{alexander.vidal@im.ufrj.br}
	
	\author{Sergio Romaña}
	\address{Instituto de Matemática, Universidade Federal do Rio de Janeiro, CEP 21941-909, Rio de Janeiro, Brazil}
	\email{sergiori@im.ufrj.br}
	\begin{abstract} In this paper we prove Ruelle's inequality for the geodesic flow in non-compact manifolds with Anosov geodesic flow and some assumptions on the curvature. In the same way, we obtain the Pesin's formula for Anosov geodesic flow in non-compact manifolds with finite volume.
	\end{abstract}
	
	\maketitle
	
	\section{Introduction}\noindent
Ruelle in \cite{ruelle} proved an important result in ergodic theory relating entropy and Lyapunov exponents. More precisely, if $f:M\rightarrow M$ is a $C^1$-diffeomorphism on a compact manifold and $\mu$ is an $f$-invariant probability measure on $M$, then 
\begin{align}\label{intro}
h_\mu(f)\le \int \sum_{\mathcal{X}_i(x)>0}\mathcal{X}_i(x)\cdot \dim(H_i(x))d\mu(x),
\end{align}
where $h_\mu(f)$ is the entropy, $\left\lbrace \mathcal{X}_i(x)\right\rbrace$ is the set of Lyapunov exponents at $x\in M$ and $\dim(H_i(x))$ is the multiplicity of $\mathcal{X}_i(x)$.
In situations involving non-compact manifolds, Ruelle's inequality may be compromised. For example, Riquelme in \cite{contra} constructed diffeomorphisms defined on non-compact manifolds with an invariant measure with positive entropy and the sum of the positive Lyapunov exponents was equal to zero. However, in recent years, certain findings have been achieved that, in particular situations, offer the possibility of verifying Ruelle's inequality in non-compact contexts. Liao and Qiu in \cite{liao} showed Ruelle's inequality for general Riemannian manifolds under an integrable condition. Riquelme in \cite{riquelme} showed Ruelle's inequality for the geodesic flow in manifolds with pinched negative sectional curvature with some condition about the derivatives of the sectional curvature.\\
The main goal of this work is to prove Ruelle's inequality for the geodesic flow on the unit tangent bundle of a non-compact manifold with Anosov geodesic flow and some assumptions on the curvature. More precisely,
\begin{theorem}
Let $M$ be a complete Riemannian manifold with Anosov geodesic flow. Assume that the curvature tensor and the derivative of the curvature tensor are both uniformly bounded. Then, for every $\phi^t$-invariant probability measure $\mu$ on $SM$, we have
\begin{align*}
	h_\mu(\phi)\le \int\limits_{SM} \sum_{\mathcal{X}_i(\theta)>0}\mathcal{X}_i(\theta)\cdot\dim (H_i(\theta))d\mu(\theta).
\end{align*}
\end{theorem}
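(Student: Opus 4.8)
The plan is to follow the scheme of Ruelle's original argument, replacing the compactness of the ambient manifold by the uniform estimates that the curvature hypotheses impose on the differential of the geodesic flow. First I would pass to the time-one map $f=\phi^1$, for which $h_\mu(\phi)=h_\mu(\phi^1)$ and the Lyapunov spectrum coincides with that of the flow. Identifying $T_\theta SM$ with the space of Jacobi fields along the geodesic determined by $\theta$ and writing, in the horizontal/vertical splitting of the Sasaki metric, $d_\theta\phi^t(\xi)=(J_\xi(t),J'_\xi(t))$ with $J_\xi$ a solution of $J''+R(J,\dot\gamma)\dot\gamma=0$, a Gronwall estimate for $\|J_\xi(t)\|^2+\|J'_\xi(t)\|^2$ and the uniform bound on $\|R\|$ give a constant $L$ with $\|d_\theta\phi^{\pm1}\|\le L$ for every $\theta\in SM$. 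Differentiating the Jacobi equation once more and using that $\|\nabla R\|$ is uniformly bounded, the same method controls the second derivatives, so that for each fixed $n$ the map $\phi^n$ is uniformly $C^{1,1}$ on $SM$, with a constant that may depend on $n$ but not on $\theta$. In particular $\log\|d\phi^{\pm1}\|$ is bounded, hence $\mu$-integrable, and the Oseledets theorem applies: for $\mu$-a.e. $\theta$ the exponents $\mathcal{X}_i(\theta)$ and multiplicities $\dim H_i(\theta)$ are defined and
\[
\frac1n\log\prod_i\max\bigl(1,\sigma_i(d_\theta\phi^n)\bigr)\xrightarrow[n\to\infty]{}\sum_{\mathcal{X}_i(\theta)>0}\mathcal{X}_i(\theta)\,\dim H_i(\theta),
\]
where $\sigma_i(\cdot)$ denotes the singular values.

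For the entropy side, since $SM$ need not be compact I would fix a countable partition $\mathcal{P}$ with atoms of diameter at most $\varepsilon$ and finite entropy $H_\mu(\mathcal{P})<\infty$; this is possible because $\mu$ is a probability measure, by letting the atoms shrink sufficiently fast towards the ends of $SM$. Writing $P(\theta)$ for the atom containing $\theta$, the standard chain $h_\mu(\phi^n,\mathcal{P})\le H_\mu(\mathcal{P}\mid \phi^n\mathcal{P})\le\int_{SM}\log N_n(\theta)\,d\mu(\theta)$, where $N_n(\theta)=\#\{Q\in\mathcal{P}:\phi^n(P(\theta))\cap Q\ne\emptyset\}$ is the number of atoms met by the image of $P(\theta)$, together with $h_\mu(\phi)=\tfrac1n h_\mu(\phi^n)$ and the fact that $h_\mu(\phi^n)$ is approximated by $h_\mu(\phi^n,\mathcal{P})$ as $\mathrm{diam}(\mathcal{P})\to0$, reduces the estimate to controlling $\tfrac1n\int\log N_n\,d\mu$.

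The core step is the geometric estimate of $N_n(\theta)$. For $n$ fixed, $\phi^n$ is uniformly $C^{1,1}$, so letting $\varepsilon\to0$ first the image $\phi^n(P(\theta))$ is contained in an arbitrarily small neighbourhood of the ellipsoid $d_\theta\phi^n\bigl(B(0,\varepsilon)\bigr)$, uniformly in $\theta$; the number of $\varepsilon$-atoms it can meet is therefore at most $C\prod_i\max\bigl(1,\sigma_i(d_\theta\phi^n)\bigr)$, with $C$ depending only on $\dim SM$. This yields, for every $n$,
\[
h_\mu(\phi)=\frac1n h_\mu(\phi^n)\le \frac{\log C}{n}+\frac1n\int_{SM}\log\prod_i\max\bigl(1,\sigma_i(d_\theta\phi^n)\bigr)\,d\mu(\theta).
\]
The uniform bound $\|d\phi^{\pm1}\|\le L$ gives $0\le\tfrac1n\log\prod_i\max(1,\sigma_i(d_\theta\phi^n))\le\dim(SM)\log L$, so the integrand is dominated; combining the Oseledets limit of the first step with the dominated convergence theorem and letting $n\to\infty$ produces
\[
h_\mu(\phi)\le\int_{SM}\sum_{\mathcal{X}_i(\theta)>0}\mathcal{X}_i(\theta)\,\dim H_i(\theta)\,d\mu(\theta),
\]
as desired.

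I expect the main obstacle to be exactly the transition from the compact to the non-compact setting: guaranteeing that the covering count $N_n(\theta)$ is governed by the linearization $d_\theta\phi^n$ with geometric constants that do not degenerate along the ends of $SM$. This is where both curvature hypotheses are essential --- the bound on $R$ controls the size of $d\phi^{\pm1}$ and the bound on $\nabla R$ controls its variation, together furnishing the uniform $C^{1,1}$-control that makes the nonlinear remainder negligible after sending $\varepsilon\to0$ and that dominates the integrand uniformly in $\theta$. The remaining care lies in constructing the finite-entropy partition with uniformly small atoms and in justifying the interchange of limit and integral, both of which rest on $\mu$ being a probability measure and on the uniform bounds established at the outset.
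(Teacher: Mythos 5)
Your opening steps are fine, and in one place simpler than the paper: under the hypothesis that $R$ is uniformly bounded, a Gronwall estimate on $\|J_\xi\|^2+\|J'_\xi\|^2$ does give a uniform bound on $\|d\phi^{\pm1}_\theta\|$, hence integrability of $\log\|d\phi^{\pm1}\|$ and Oseledec (the paper instead derives this from the Anosov splitting and the uniform angle bound in Section~3). Your replacement of the paper's linearization step (Theorem~5.1, proved there by contradiction from the quasi-conformality bound $\|d\phi^m_\theta\|\le\kappa\|d\phi^m_\theta\|^*$ of Proposition~4.3) by a uniform $C^{1,1}$ bound on $\phi^n$ coming from bounded $\nabla R$ is plausible but only asserted; the second variation equation does involve $\nabla R$, but you would need to actually carry out that estimate in a coordinate-free way, and the paper never does (it uses the $\nabla R$ bound only through the Sasaki curvature formula and Proposition~2.2, to control $\det d(\exp_{\phi^m(\theta)})$).

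The genuine gap is in the counting step, and it is exactly where non-compactness bites. Your bound
$N_n(\theta)\le C\prod_i\max\bigl(1,\sigma_i(d_\theta\phi^n)\bigr)$ with $C$ depending only on $\dim SM$ rests on a packing argument: every atom met by $\phi^n(P(\theta))$ must contain a definite amount of volume, comparable to $\varepsilon^{\dim SM}$. But you construct $\mathcal{P}$ by ``letting the atoms shrink sufficiently fast towards the ends'' in order to secure $H_\mu(\mathcal{P})<\infty$; once the image ellipsoid reaches a region where atoms are much smaller than $\varepsilon$, it can meet arbitrarily many of them and the claimed bound is simply false. Even if you insisted on atoms with inner radius comparable to $\varepsilon$ everywhere, the lower volume bound $\mathrm{vol}(B(x,\varepsilon))\ge c\,\varepsilon^{\dim SM}$ fails on a non-compact manifold of bounded curvature when the injectivity radius collapses (cusps, thin ends), so the packing count still degenerates along the ends, and you would separately have to justify finite entropy for such a partition. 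This is precisely why the paper does not use a single global partition: it fixes a compact set $K$ with $\mu(K)>1-\varepsilon$, builds a \emph{finite} partition $\mathcal{P}_K\cup\{SM\setminus K\}$ whose atoms in $K$ satisfy $B(x,r')\subset X\subset B(x,r)$ with $r<2r'$ (so the volume comparison is legitimate on the compact set), proves the fine exponential count only for atoms meeting the good set $\Lambda_m$ (Lemma~5.3), and controls all remaining atoms by the crude bound of Lemma~5.2 weighted by $\mu(SM\setminus K)+\mu(SM\setminus\Lambda_m)$, which is killed in the limit $m\to\infty$, $\varepsilon\to 0$. Without some version of this localization your final inequality does not follow; the dominated convergence argument at the end is fine, but it is fed by a pointwise bound you have not established.
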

We can see that this result generalizes what Riquelme demonstrated in \cite{riquelme} since the Anosov geodesic flows encompass the manifolds with pinched negative curvature.\\
The question arises as to under what conditions equality can be achieved in \eqref{intro}. For example, when the manifold is compact, the diffeomorphism is $C^{1+\alpha}$ and the measure is absolutely continuous with respect	to the Lebesgue measure, Pesin showed in \cite{igual} that \eqref{intro} is actually an equality, called Pesin's formula. Our second result deals with the equality case of Theorem 1.1. In this case, we suppose that the manifold has finite volume.
\begin{theorem}
Let $M$ be a complete Riemannian manifold with finite volume and Anosov geodesic flow, where the flow is $C^1$-Hölder. Assume that the curvature tensor and the derivative of the curvature tensor are both uniformly bounded. Then, for every $\phi^t$-invariant probability measure $\mu$ on $SM$ which is absolutely continuous relative to the Lebesgue measure, we have
\begin{align*}
	h_\mu(\phi)=  \int\limits_{SM} \sum_{\mathcal{X}_i(\theta)>0}\mathcal{X}_i(\theta)\cdot\dim (H_i(\theta))d\mu(\theta).
\end{align*}
\end{theorem}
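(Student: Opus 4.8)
The plan is to obtain the equality by pairing the Ruelle inequality of Theorem 1.1 with its reverse, so that the whole task reduces to proving
\[
h_\mu(\phi)\ge \int_{SM}\sum_{\mathcal{X}_i(\theta)>0}\mathcal{X}_i(\theta)\cdot\dim(H_i(\theta))\,d\mu(\theta).
\]
First I would pass to the time-one map $f=\phi^1$. The entropy of the flow coincides with that of its time-one map, and the Oseledets splitting of $f$ agrees with that of the flow, the flow direction $E^0$ carrying the zero exponent; hence it is equivalent to prove the reverse Pesin inequality for the $C^{1+\alpha}$ diffeomorphism $f$ with respect to the absolutely continuous invariant measure $\mu$. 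The uniform bounds on the curvature tensor and on its covariant derivative enter already here: through the Jacobi equation they give bounded geometry of $SM$ and uniform control of $\log^+\|Df^{\pm1}\|$, so that the positive exponents are bounded and the right-hand integral is finite.

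Next I would invoke Pesin theory for $f$. By the Anosov property $TSM=E^s\oplus E^0\oplus E^u$, the positive exponents being carried by $E^u$, and integrating $E^u$ yields local unstable manifolds $W^u_{\mathrm{loc}}(\theta)$ whose sizes are uniform on sets of large measure by bounded geometry. The decisive structural input is the absolute continuity of the unstable foliation: since $f$ is $C^1$ with H\"older derivative and the geometry is bounded, the unstable holonomies are absolutely continuous with uniformly controlled Jacobians. Combined with the hypothesis $\mu\ll\mathrm{vol}$, a Fubini-type argument then shows that the conditional measures of $\mu$ on unstable leaves are absolutely continuous with respect to the induced Riemannian volume, i.e. $\mu$ has the SRB property along $W^u$.

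It remains to convert this into an entropy lower bound. Following the Ledrappier--Strelcyn/Ma\~n\'e scheme, I would build a measurable partition $\xi$ subordinate to the unstable foliation and increasing under $f$ (so that $f\xi\le\xi$) with finite entropy $H_\mu(\xi)$; this is where finiteness of the volume is essential, since it permits a Ma\~n\'e-type construction of such a partition in the non-compact setting. Then
\[
h_\mu(f)\ge h_\mu(f,\xi)=H_\mu(\xi\mid f\xi)=\int_{SM}-\log\mu^{f\xi}_\theta(\xi(\theta))\,d\mu(\theta),
\]
and the absolute continuity of the conditionals identifies the integrand with the logarithm of the unstable Jacobian $\log|\det Df|_{E^u(\theta)}|$. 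Integrating and applying Oseledets' theorem recovers exactly $\int_{SM}\sum_{\mathcal{X}_i(\theta)>0}\mathcal{X}_i(\theta)\dim(H_i(\theta))\,d\mu(\theta)$, which is the inequality sought.

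I expect the main obstacle to be the non-compactness. Two points require genuine care beyond the classical compact argument: first, establishing the absolute continuity of the unstable foliation with \emph{uniform} distortion estimates, which is precisely where the uniform bounds on the curvature and its derivative (hence bounded geometry and H\"older continuity of $E^u$) must be exploited; and second, producing a subordinate partition of finite entropy together with the integrability of $\log|\det Df|_{E^u}|$, which leans on finiteness of the volume and on the curvature bounds. Once these uniform estimates are secured, the conditional-entropy computation proceeds as in the compact case.
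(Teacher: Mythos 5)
Your reduction to the reverse inequality via Theorem 1.1 matches the paper, but from there you take a genuinely different route. You follow the Ledrappier--Strelcyn scheme: local unstable manifolds, absolute continuity of the unstable lamination, SRB conditionals along leaves, and a measurable partition subordinate to $W^u$ giving $h_\mu(f)\ge H_\mu(\xi\mid f\xi)=\int\log\bigl|\det df|_{E^u}\bigr|\,d\mu$. The paper instead deliberately follows Mañé's argument, which avoids stable-manifold theory altogether: it bounds from above the Lebesgue measure of the Bowen-type sets $S_n(\phi^N,\rho,\theta)$ by slicing them along translates of $E^u(\theta)$ and pushing forward $(E^{cs},E^u)$-graphs of controlled dispersion (Proposition 6.2), builds a countable finite-entropy partition of $SM$ with $\mathrm{diam}\,\mathcal{P}(\theta)\le\rho(\theta)$ (Lemma 6.4, where finite volume and the curvature bounds substitute for compactness), and then passes from the Lebesgue measure $\nu$ to $\mu$ by Radon--Nikodym and Shannon--McMillan--Breiman (Proposition 6.5). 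The payoff of the paper's choice is that the only hyperbolicity inputs needed are the graph-transform Lemma 6.1 and the determinant-continuity estimate \eqref{determinante}, both of which survive non-compactness with uniform constants.

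As written, however, your proposal has genuine gaps at exactly the two places you flag. First, absolute continuity of the unstable lamination with uniform Jacobian control on a non-compact $SM$ is not something you can cite: the classical proofs are for compact manifolds, and establishing it here from bounded geometry and a $C^1$-Hölder flow would be a substantial piece of work, arguably comparable to the theorem itself. Second, a finite-entropy partition \emph{subordinate to} $W^u$ is strictly more than what the paper's Lemma 6.4 produces (its atoms are merely small, not adapted to unstable leaves), and the ``Mañé-type construction'' you invoke for it does not exist off the shelf in the non-compact setting: Mañé's own partition lemma uses compactness, which is precisely why the paper devotes Lemma 6.4 to rebuilding a (non-subordinate) finite-entropy partition from finite volume and the curvature bounds. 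Until these two steps are actually carried out, your argument is an outline of a known strategy rather than a proof; the Mañé-style route the paper takes exists precisely to sidestep them.
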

\subsection*{Structure of the Paper:} 
In section 2, we introduce the notations and geometric tools that we use in the paper. In section 3, we prove the existence of Oseledec's decomposition for the flow at time $t=1$. In section 4, we explore certain results that will allow us to deal with the challenge of non-compactness of the manifold. Using the strategies exhibited in \cite{pesin} to prove the Ruelle's inequality for diffeomorphisms in the compact case, we prove Theorem 1.1 in section 5. Finally, in section 6 we prove Theorem 1.2 using techniques applied by Mañe in \cite{mane}.
	
\section{Preliminaries and notation}
\noindent
Throughout this paper, $M=(M,g)$ will denote a complete Riemannian manifold without boundary of dimension $n\ge 2$, $TM$ is the tangent bundle, $SM$ its unit tangent bundle and $\pi:TM\rightarrow M$ will denote the canonical projection, that is, $\pi(x,v)=x$ for $(x,v)\in TM$.
	\subsection{Geodesic flow}
	Given $\theta=(x,v)\in TM$, we denote by $\gamma_\theta$ the unique geodesic with initial conditions $\gamma_\theta(0)=x$ and $\gamma'_\theta(0)=v$. The geodesic flow is a family of $C^\infty$-diffeomorphisms $\phi^t:TM\rightarrow TM$, where $t\in \mathbb{R}$, given by
	\begin{align*}
		\phi^t(\theta)=(\gamma_\theta(t), \gamma'_\theta(t)).
	\end{align*}
	Since geodesics travel with constant speed, we have that $\phi^t$ leaves $SM$ invariant. The geodesic flow generates a vector field $G$ on $TM$ given by
	\begin{align*}
		G(\theta)=\left. \dfrac{d}{dt}\right|_{t=0} \phi^t(\theta)=\left. \dfrac{d}{dt}\right|_{t=0}\left( \gamma_\theta(t),\gamma'_\theta(t)\right) . 
	\end{align*}
	For each $\theta=(x,v)\in TM$, let $V$ be the vertical subbundle of $TM$ whose fiber at $\theta$ is given by $V_\theta=\ker d\pi_\theta$. Let $K: TTM\rightarrow TM$ be the connection map induced by the Riemannian metric (see \cite{paternain}) and denotes by $H$ the horizontal subbundle of $TM$ whose fiber at $\theta$ is given by $H_\theta=\ker K_\theta$. The maps $\left. d \pi_\theta\right|_{H_\theta}:H_\theta\rightarrow T_xM$ and $\left. K_\theta\right|_{V_\theta}: V_\theta\rightarrow T_xM$ are linear isomorphisms. This implies that $T_\theta TM=H_\theta\oplus V_\theta$ and the map $j_\theta:T_\theta TM\rightarrow T_xM\times T_xM$ given by
\begin{align}\label{jota}
		j_\theta(\xi)=(d\pi_\theta(\xi), K_\theta(\xi))
	\end{align}
	is a linear isomorphism. Furthermore, we can identify every element $\xi\in T_\theta TM$ with the pair $j_\theta(\xi)$. Using the decomposition $T_\theta TM=H_\theta \oplus V_\theta$, we endow the tangent bundle $TM$ with a special Riemannian metric that makes $H_\theta$ and $V_\theta$ orthogonal. This metric is called the Sasaki metric and it's given by
	\begin{align*}
		\left\langle \xi, \eta \right\rangle_\theta=\left\langle d\pi_\theta(\xi), d\pi_\theta(\eta)\right\rangle_x + \left\langle K_\theta(\xi),K_\theta(\eta)\right\rangle_x . 
	\end{align*}
From now on, we work with the Sasaki metric restricted to the unit tangent bundle $SM$. To begin with, it is valid to ask if $SM$ is a complete Riemannian manifold with this metric. 
\begin{lemma} Let $M$ be a complete Riemannian manifold. Then $SM$ is a complete metric space with the Sasaki metric.
\end{lemma}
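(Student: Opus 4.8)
The plan is to reduce the completeness of $SM$ to the completeness of $M$ together with the Hopf--Rinow theorem, by trapping an arbitrary Cauchy sequence inside a compact sphere bundle. The starting point is the observation that the canonical projection $\pi\colon SM\to M$ is distance non-increasing. Indeed, directly from the definition of the Sasaki metric, for every $\theta=(x,v)\in SM$ and every $\xi\in T_\theta SM$ one has $\|d\pi_\theta(\xi)\|_x^2\le \|d\pi_\theta(\xi)\|_x^2+\|K_\theta(\xi)\|_x^2=\|\xi\|_\theta^2$, so that $\|d\pi_\theta(\xi)\|_x\le\|\xi\|_\theta$. Integrating this inequality along any piecewise smooth curve in $SM$ shows that the $g$-length of its projection to $M$ is at most its Sasaki length, and taking the infimum over curves joining two points yields $d_M(\pi(\theta),\pi(\theta'))\le d_S(\theta,\theta')$, where $d_M$ and $d_S$ denote the distances induced by $g$ and by the Sasaki metric, respectively.

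Next I would take a Cauchy sequence $(\theta_k)$, with $\theta_k=(x_k,v_k)$, in $(SM,d_S)$. By the previous step the sequence $(x_k)$ is Cauchy in $(M,d_M)$, and since $M$ is complete there is $x\in M$ with $x_k\to x$. By the Hopf--Rinow theorem the closed ball $\overline{B}(x,1)\subset M$ is compact, and hence so is the restricted sphere bundle $\mathcal{K}:=\pi^{-1}(\overline{B}(x,1))\cap SM$, being an $S^{n-1}$-bundle over a compact base. Choosing $N$ so that $d_M(x_k,x)\le 1$ for all $k\ge N$, the tail $(\theta_k)_{k\ge N}$ is contained in $\mathcal{K}$. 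Since the metric topology induced by $d_S$ coincides with the manifold topology of $SM$, the compact set $\mathcal{K}$ is sequentially compact for $d_S$, so $(\theta_k)$ has a $d_S$-convergent subsequence; as a Cauchy sequence possessing a convergent subsequence is itself convergent, we conclude $\theta_k\to\theta$ for some $\theta\in\mathcal{K}\subset SM$, which proves completeness.

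The main subtlety to keep in mind is that the vectors $v_k$ live in distinct fibers $S_{x_k}M$, so there is no naive way to pass to their limit; the argument circumvents this entirely by never comparing the $v_k$ directly, and instead exploiting the ambient compactness of the sphere bundle over a closed ball, which is available precisely because $M$ is complete. The only auxiliary facts used are standard ones: that the Sasaki distance induces the manifold topology on $SM$ (true for the distance of any Riemannian metric), and the Hopf--Rinow theorem, whose hypotheses are met by the geodesic completeness of $M$ assumed throughout.
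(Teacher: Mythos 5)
Your proposal is correct and follows essentially the same route as the paper: both establish that $\pi$ is distance non-increasing from the definition of the Sasaki metric, project a Cauchy sequence to $M$, and trap its tail in the compact sphere bundle over a closed ball about the limit point. Your write-up is slightly more explicit about why that set is compact (Hopf--Rinow) and about the final ``Cauchy plus convergent subsequence'' step, but the argument is the same.
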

\begin{proof}
Let $\theta, \omega\in SM$ and $\gamma:[0,1]\rightarrow SM$ be a curve joining $\theta$ and $\omega$. By the identification \eqref{jota} we can write
\begin{align*}
	l(\gamma)&=\int_0^1 \left\| \gamma'(t)\right\|dt \\
	&=\int_0^1 \left( \left\| d\pi_{\gamma(t)}( \gamma'(t))\right\|^2 + \left\| K_{\gamma(t)}( \gamma'(t))\right\|^2\right)^{1/2}  dt\\
	&\ge \int_0^1  \left\| d\pi_{\gamma(t)}( \gamma'(t))\right\| dt\\
	&=\int_0^1 \left\| \left( \pi\circ \gamma\right)'(t) \right\|dt\\
	&=l(\pi\circ \gamma).
\end{align*}
This implies that
\begin{align}\label{comple}
 d(\theta,\omega)\ge d(\pi(\theta),\pi(\omega))   
\end{align}
for any two points $\theta,\omega\in SM$. Let $\left\lbrace (p_n,v_n)\right\rbrace_{n\in \mathbb{N}}$ be a Cauchy sequence in $SM$. By \eqref{comple} we have that $\left\lbrace p_n\right\rbrace_{n\in \mathbb{N}}$ is a Cauchy sequence in $M$. Since $M$ is complete, there is $p\in M$ such that $\displaystyle\lim_{n\rightarrow +\infty}p_n=p$. If we consider the compact set $X=\left\lbrace (q,v)\in SM: d(q,p)\le 1\right\rbrace$, for $n\ge n_0$ we have that $(p_n,v_n)\in X$ and therefore the Cauchy sequence converges in $SM$.
\end{proof}
\noindent
The sectional curvature of $SM$ with the Sasaki metric can be calculated from the curvature tensor and the derivative of the curvature tensor of $M$ as explained in \cite{kowalski}: Let $\Pi$ be a plane in $T_{(x,v)}SM$ and choose an orthonormal basis $\left\lbrace (v_1,w_1), (v_2,w_2)\right\rbrace$ for $\Pi$ satisfying $\left\| v_i\right\|^2+\left\| w_i\right\|^2=1 $, for $i=1,2$, and $\left\langle v_1,v_2 \right\rangle =\left\langle w_1,w_2\right\rangle=0$. Then the Sasaki sectional curvature of $\Pi$ is given by
\begin{align}\label{curvaturaSM}
	K_{Sas}(\Pi)=&\left\langle R_x(v_1,v_2)v_1,v_2 \right\rangle  +3\left\langle R_x(v_1,v_2)w_1,w_2 \right\rangle 
	+\left\| w_1\right\|^2 \left\| w_2\right\|^2 \nonumber \\ &-\dfrac{3}{4}\left\| R_x(v_1,v_2)v\right\|^2+\dfrac{1}{4}\left\|R_x(v,w_2)v_1 \right\|^2+\dfrac{1}{4}\left\|R_x(v,w_1)v_2 \right\|^2 \nonumber \\
	&+\dfrac{1}{2}\left\langle R_x(v,w_1)w_2,R_x(v,w_2)v_1 \right\rangle - \left\langle R_x(v,w_1)v_1,R_x(v,w_2)v_2 \right\rangle \nonumber \\
	&+\left\langle (\nabla_{v_1}R)_x(v,w_2)v_2,v_1 \right\rangle + \left\langle (\nabla_{v_2}R)_x(v,w_1)v_1,v_2 \right\rangle   .  
\end{align}
This equality shows that if the curvature tensor of $M$ and its derivatives are bounded, then the sectional curvature of $SM$ with the Sasaki metric is also bounded. This property is crucial as it allows us to compare volumes between subsets of $TSM$ and subsets of $SM$ using the exponential map of $SM$ (see Lemma 5.3).\\

The types of geodesic flows discussed in this paper are the Anosov geodesic flows, whose definition follows below.\\
We say that the geodesic flow $\phi^t:SM\rightarrow SM$ is of Anosov type if $T(SM)$ has a continuous splitting $T(SM)=E^s\oplus \left\langle G\right\rangle \oplus E^u$ such that
 \begin{eqnarray*}
d\phi^t_{\theta} (E^{s(u)}(\theta)) &=& E^{s(u)}(\phi^t(\theta)),\\
\left\|d\phi^t_{\theta}\big{|}_{E^s} \right\| &\leq& C \lambda^{t},\\
\left\|d\phi^{-t}_{\theta}\big{|}_{E^u}\right\| &\leq& C \lambda^{t},
\end{eqnarray*}
	for all $t\ge 0$ with $C>0$ and $\lambda\in (0,1)$, where $G$ is the geodesic vector field. It's known that if the geodesic flow is Anosov, then the subspaces $E^s(\theta)$ and $E^u(\theta)$ are Lagrangian for every $\theta\in SM$ (see \cite{paternain} for more details).
\subsection{Jacobi fields} To study the differential of the geodesic flow with geometric arguments, let us recall the definition of a Jacobi field. A vector field $J$ along a geodesic $\gamma$ of $M$ is a Jacobi field if it satisfies the Jacobi equation
\begin{align}\label{jacobi}
J''(t)+R(\gamma'(t),J(t))\gamma'(t)=0,
\end{align}
where $R$ denotes the curvature tensor of $M$ and $"'"$ denotes the covariant derivative along $\gamma$. A Jacobi field is determined by the initial values $J(t_0)$ and $J'(t_0)$, for any given $t_0\in \mathbb{R}$. If we denote by $S$ the orthogonal complement of the subspace spanned by $G$, for every $\theta\in SM$, the map $\xi\rightarrow J_\xi$ defines an isomorphism between $S(\theta)$ and the space of perpendicular Jacobi fields along $\gamma_\theta$, where $J_\xi(0)=d\pi_\theta(\xi)$ and $J'_\xi(0)=K_\theta(\xi)$.
The differential of the geodesic flow is determined by the behavior of the Jacobi fields and, therefore, by the curvature. More precisely, for $\theta\in SM$ and $\xi\in T_\theta SM$ we have (in the horizontal and vertical coordinates)
\begin{align*}
		d\phi^t_\theta(\xi)=(J_\xi(t), J'_\xi(t)), \hspace{0.5cm} t\in \mathbb{R}.
	\end{align*}
In the context of an Anosov geodesic flow, if $\xi\in E^s(\theta)$ (respectively, $\xi\in E^u(\theta)$), the Jacobi field associated $J_\xi(t)$ is called a stable (respectively, unstable) Jacobi field along $\gamma_\theta(t)$.\\
The following proposition allows us to uniformly limit the derivative of the exponential map from certain conditions on the curvature of the manifold.
\begin{proposition}
Let $N$ be a complete Riemannian manifold and suppose that the curvature tensor is uniformly bounded. Then there exists $t_0>0$ such that for all $x\in N$ and for all $v,w\in T_xN$ with $\left\| v\right\| =\left\| w\right\| =1$ we have
	\begin{align*}
		\left\| d(exp_x)_{tv}w\right\| \le \dfrac{5}{2}, \hspace{0.5cm} \forall \left| t\right| \le t_0.
	\end{align*}
\end{proposition}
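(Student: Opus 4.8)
The plan is to express the differential of the exponential map through a Jacobi field and then control that field by an elementary scalar comparison argument that exploits the uniform bound on the curvature tensor.

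First I would recall the classical identification of $d(\exp_x)_{tv}$ with Jacobi fields. Fix $x\in N$ and unit vectors $v,w\in T_xN$, and let $\gamma(s)=\exp_x(sv)$ be the unit-speed geodesic with $\gamma(0)=x$ and $\gamma'(0)=v$. Differentiating the two-parameter variation $(u,r)\mapsto \exp_x\big(r(tv+uw)\big)$ at $r=1$, $u=0$, one sees that $s\mapsto\tfrac{\partial}{\partial u}\exp_x(s(tv+uw))|_{u=0}$ is a Jacobi field along the corresponding geodesic with prescribed initial data, and after rescaling to unit speed this gives
$$d(\exp_x)_{tv}(w)=\frac{1}{t}\,J(t),\qquad t\neq 0,$$
where $J$ is the unique Jacobi field along $\gamma$ with $J(0)=0$ and $J'(0)=w$ (and the expression extends continuously to the value $w$ at $t=0$). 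Thus the estimate reduces to bounding $\|J(t)\|/|t|$ uniformly for small $|t|$, independently of $x$, $v$, and $w$.

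Next I would convert the Jacobi equation into an integral inequality. Writing $C=\sup_{x\in N}\|R_x\|<\infty$ for the uniform curvature bound and using $\|\gamma'\|=1$, the Jacobi equation \eqref{jacobi} gives $\|J''(s)\|=\|R(\gamma'(s),J(s))\gamma'(s)\|\le C\,\|J(s)\|$. Integrating twice from $0$, with $J(0)=0$, $J'(0)=w$, and $\|w\|=1$, yields for $t>0$
$$\|J(t)\|\le t+C\int_0^t (t-s)\,\|J(s)\|\,ds.$$
Since the kernel $C(t-s)$ is nonnegative, the standard Volterra (Gronwall-type) comparison bounds $\|J(t)\|$ by the solution $z$ of the associated equality, i.e. $z''=Cz$ with $z(0)=0$, $z'(0)=1$, namely $z(t)=\sinh(\sqrt{C}\,t)/\sqrt{C}$. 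Hence
$$\frac{\|J(t)\|}{t}\le \frac{\sinh(\sqrt{C}\,t)}{\sqrt{C}\,t}.$$

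Finally, the function $u\mapsto \sinh(u)/u$ is increasing and tends to $1$ as $u\to 0^+$, and $1<5/2$, so there is $u^\ast>0$ with $\sinh(u^\ast)/u^\ast\le 5/2$. Setting $t_0=u^\ast/\sqrt{C}$ (and taking $t_0$ arbitrary if $C=0$) gives $\|d(\exp_x)_{tv}(w)\|=\|J(t)\|/|t|\le 5/2$ for all $0<|t|\le t_0$, the case $t<0$ following by applying the same argument to the reversed geodesic, and the case $t=0$ being trivial. The decisive point is that $C$ is a single constant valid on all of $N$, so $t_0$ is genuinely uniform in $x$, $v$, $w$. I expect the only delicate step to be the comparison itself: one must justify passing from the vector integral inequality to the scalar ODE bound with the correct uniform constant, but the nonnegativity of the kernel makes this routine, and working with the integral form (rather than differentiating $\|J\|$) avoids any issue at zeros of $J$.
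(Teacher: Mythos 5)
Your proof is correct, and it takes a genuinely more self-contained route than the paper's. The paper splits $w$ into its components along $v$ and orthogonal to $v$: for the radial part the Gauss Lemma gives $\left\| d(\exp_x)_{tv}v\right\| =1$ exactly, while for the orthogonal part it invokes an external result (Lemma 8.3 of the cited reference of Burns) to get the uniform bound $3/2$ on $\left\| J(t)\right\| /\left| t\right|$, and then adds the two contributions to reach $5/2$. You instead treat an arbitrary unit $w$ in one stroke via the identity $d(\exp_x)_{tv}(w)=J(t)/t$ with $J(0)=0$, $J'(0)=w$, and replace the citation by an explicit Volterra--Gronwall comparison of $\left\| J\right\|$ against $\sinh(\sqrt{C}\,t)/\sqrt{C}$; working with the twice-integrated form (after parallel transport to $T_xN$, which you should perhaps say explicitly so that the integration of $J''$ makes sense) correctly sidesteps the non-differentiability of $\left\| J\right\|$ at zeros. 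Your argument buys three things: it needs no orthogonal decomposition, it makes the dependence $t_0\sim 1/\sqrt{C}$ on the curvature bound explicit, and it shows the constant $5/2$ is far from sharp, since $\sinh(u)/u\to 1$ as $u\to 0^+$ so the bound can be taken as close to $1$ as desired. The paper's version is shorter on the page only because the analytic work is outsourced to the quoted lemma; the substance of both proofs is the same principle that a uniform curvature bound controls the growth of Jacobi fields vanishing at the origin.
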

\begin{proof}
If $w\in \left\langle v\right\rangle$, then $w=v$ or $w=-v$. In both cases, by Gauss Lemma (see \cite{lee}) we have that
\begin{align*}
	\left\| d(exp_x)_{tv}w\right\|^2&=\left\langle d(exp_x)_{tv}w, d(exp_x)_{tv}w\right\rangle \\
	&=\left\langle d(exp_x)_{tv}v, d(exp_x)_{tv}v\right\rangle\\
	&=\dfrac{1}{t^2}\left\langle d(exp_x)_{tv}tv, d(exp_x)_{tv}tv\right\rangle\\
	&=\dfrac{1}{t^2}\left\langle tv,tv\right\rangle \\
	&=1.
\end{align*}
Now assume that $w\in \left\langle v\right\rangle^\perp$. Consider the Jacobi field
\begin{align*}
	J(t)=d(exp_x)_{tv}tw, \hspace{0.3cm} t\in [-1,1]
\end{align*}
with initial conditions $J(0)=0$ and $J'(0)=w$. By Lemma 8.3 of \cite{burns} there exists $t_0>0$, independent of the point $x$, such that
\begin{align*}
	\left\| d(exp_x)_{tv}w\right\| =\dfrac{\left\|J(t) \right\|}{\left| t\right| }\le \dfrac{3}{2}, \hspace{0.5cm} \forall t\in(-t_0,t_0)\setminus \left\lbrace 0 \right\rbrace.
\end{align*}
As $T_xN=\left\langle v\right\rangle + \left\langle v\right\rangle^\perp$, the last inequality completes the proof.
\end{proof}
\subsection{No conjugate points}
Let $\gamma$ be a geodesic joining $p,q\in M$, $p\neq q$. We say that $p,q$ are conjugate along $\gamma$ if there exists a non-zero Jacobi field along $\gamma$ vanishing at $p$ and $q$. A manifold $M$ has no conjugate points if any pair of points are not conjugate. This is equivalent to the fact that the exponential map is non-singular at every point of $M$. There are examples of manifolds without conjugate points obtained from the hyperbolic behavior of the geodesic flow. In \cite{klin}, Klingenberg proved that a compact Riemannian manifold with Anosov geodesic flow has no conjugate points. Years later, Mañé (see \cite{mane2}) generalized this result to the case of manifolds of finite volume. In the case of infinite volume, Melo and Romaña in \cite{nocon} extended the result of Mañé over the assumption of sectional curvature bounded below and above. These results show the relationship that exists between the geometry and dynamic of an Anosov geodesic flow.\\
	
Let $M$ be a complete Riemannian manifold without conjugate points and sectional curvature bounded below by $-c^2$, for some $c>0$. When the geodesic flow $\phi^t:SM\rightarrow SM$ is of Anosov type, Bolton in \cite{bolton} showed that there is a positive constant $\delta$ such that, for every $\theta\in SM$, the angle between $E^s(\theta)$ and $E^u(\theta)$ is greater than $\delta$. Moreover, Eberlein in \cite{eber} showed that 
\begin{itemize}
	\item [1.] $\left\| K_\theta(\xi)\right\| \le c\left\| d\pi_\theta(\xi)\right\| $ for every $\xi\in E^s(\theta)$ or $E^u(\theta)$, where $K:TTM \rightarrow TM$ is the connection map.
	\item [2.] If $\xi\in E^s(\theta)$ or $E^u(\theta)$, then 	$J_\xi(t)\neq 0$ for every $t\in \mathbb{R}$.
\end{itemize}
\subsection{Lyapunov exponents}
Let $(M.g)$ be a Riemannian manifold and $f:M\rightarrow M$ a $C^1$-diffeomorphism. The point $x$ is said to be (Lyapunov-Perron) regular if there exist numbers $\left\lbrace\mathcal{X}_i(x) \right\rbrace_{i=1}^{l(x)}$, called Lyapunov exponents, and a decomposition of the tangent space at $x$ into $T_xM=\bigoplus_{i=1}^{l(x)}H_i$ such that for every vector $v\in H_i(x)\setminus \left\lbrace 0\right\rbrace $, we have  
\begin{align*}
	\lim_{n\rightarrow \pm \infty} \dfrac{1}{n}\log \left\|df^n_xv \right\|=\mathcal{X}_i(x) 
\end{align*}
and
\begin{align*}
	\lim_{n\rightarrow \pm \infty} \dfrac{1}{n}\log\left| \det\left( df^n_x\right) \right|=\sum_{i=1}^{l(x)}  \mathcal{X}_i(x)\cdot\dim (H_i(x)). 
\end{align*}
Let $\Lambda$ be the set of regular points. By Oseledec's Theorem (see \cite{oseledec}), if $\mu$ is an $f$-invariant probability measure on $M$ such that $\log^+ \left\| df^{\pm 1} \right\|$ is $\mu$-integrable, then the set $\Lambda$ has full $\mu$-measure. Moreover, the functions $x\rightarrow \mathcal{X}_i(x)$ and $x\rightarrow \dim(H_i(x))$ are $\mu$-measurable and $f$-invariant. In particular, if $\mu$ is ergodic, they are $\mu$-almost everywhere constant.
\section{Existence of Lyapunov exponents}
In this section, we will prove that when the geodesic flow is Anosov and the sectional curvature is bounded below, the norm $\left\| d\phi^{\pm 1}_\theta\right\| $ is bounded by a positive constant independent of $\theta$. This boundedness is crucial as it ensures, for a given probability measure, the existence of Lyapunov exponents by Oseledec's Theorem. More precisely,
\begin{theorem}
Let $M$ be a complete Riemannian manifold without conjugate points, sectional curvature bounded below by $-c^2$, for some $c>0$, and $\mu$ an $\phi^t$-invariant probability measure in $SM$. If the geodesic flow is of Anosov type, then $\log\left\| d\phi^{\pm 1}\right\|\in L^1(\mu)$.
\end{theorem}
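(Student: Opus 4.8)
The plan is to prove the sharper statement that $\|d\phi^{\pm 1}_\theta\|$ is bounded above by a constant $K$ independent of $\theta$. Since $\mu$ is a probability measure, and since the geodesic vector field satisfies $d\phi^t_\theta(G(\theta)) = G(\phi^t(\theta))$ with $\|G\| \equiv 1$ in the Sasaki metric, one has $\|d\phi^{\pm 1}_\theta\| \ge 1$, so that $0 \le \log\|d\phi^{\pm 1}_\theta\| \le \log K$; a bounded measurable function is integrable against a probability measure, and the conclusion follows at once.

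First I would work on the invariant splitting $T_\theta SM = E^s(\theta) \oplus \langle G(\theta)\rangle \oplus E^u(\theta)$ and estimate $d\phi^t$ on each summand for $|t| \le 1$. On $\langle G(\theta)\rangle$ the norm is preserved. For $\xi \in E^s(\theta)$ or $\xi \in E^u(\theta)$, recall that in horizontal-vertical coordinates $d\phi^t_\theta(\xi) = (J_\xi(t), J'_\xi(t))$, so that $\|d\phi^t_\theta(\xi)\|^2 = \|J_\xi(t)\|^2 + \|J'_\xi(t)\|^2$. The decisive input is Eberlein's estimate (item 1 above): applied at the point $\phi^t(\theta)$ and using the invariance of $E^{s}$ and $E^u$, it gives $\|J'_\xi(t)\| = \|K_{\phi^t(\theta)}(d\phi^t_\theta\xi)\| \le c\,\|d\pi_{\phi^t(\theta)}(d\phi^t_\theta\xi)\| = c\,\|J_\xi(t)\|$ for every $t$.

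From here the growth of $\|J_\xi(t)\|$ is controlled by a Gronwall argument. Since $J_\xi$ never vanishes (item 2 above), the function $t \mapsto \log\|J_\xi(t)\|$ is differentiable and $\left|\tfrac{d}{dt}\log\|J_\xi(t)\|\right| = \tfrac{|\langle J'_\xi(t), J_\xi(t)\rangle|}{\|J_\xi(t)\|^2} \le \tfrac{\|J'_\xi(t)\|}{\|J_\xi(t)\|} \le c$, whence $\|J_\xi(t)\| \le e^{c|t|}\|J_\xi(0)\| \le e^{c}\|\xi\|$ for $|t| \le 1$, using $\|J_\xi(0)\| = \|d\pi_\theta(\xi)\| \le \|\xi\|$. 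Combining this with the previous estimate yields $\|d\phi^t_\theta(\xi)\| \le \sqrt{1+c^2}\,e^{c}\,\|\xi\|$ for every $\xi \in E^s(\theta) \cup E^u(\theta)$ and $|t| \le 1$, a bound independent of $\theta$.

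Finally I would assemble the three estimates into a bound for a general $\xi \in T_\theta SM$. Writing $\xi = \xi_s + \xi_0 + \xi_u$ with $\xi_s \in E^s(\theta)$, $\xi_0 \in \langle G(\theta)\rangle$ and $\xi_u \in E^u(\theta)$, the triangle inequality reduces the problem to bounding $\|\xi_s\|, \|\xi_0\|, \|\xi_u\|$ by a uniform multiple of $\|\xi\|$. Since $G$ is orthogonal to $S(\theta) = E^s(\theta) \oplus E^u(\theta)$, this is immediate for $\xi_0$; for $\xi_s$ and $\xi_u$ it follows from Bolton's theorem that the angle between $E^s(\theta)$ and $E^u(\theta)$ is bounded below by a constant $\delta > 0$ independent of $\theta$, so the oblique projections onto the two bundles have norm bounded in terms of $\delta$. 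This produces $\|d\phi^{\pm 1}_\theta\| \le K$ for some $K = K(c,\delta)$. I expect the main obstacle to be exactly this uniformity over the \emph{expanding} directions: the Anosov condition by itself bounds only the contracted subspaces from above, so the upper bound on the expansion over unit time must come entirely from the curvature hypothesis via Eberlein's estimate and the Gronwall inequality, while the non-orthogonality of $E^s$ and $E^u$ has to be absorbed through the uniform angle bound.
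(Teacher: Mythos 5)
Your proposal is correct, and its overall skeleton matches the paper's: split $T_\theta SM=E^s\oplus\langle G\rangle\oplus E^u$, bound $d\phi^{\pm1}$ on each summand, and use the uniform lower bound on the angle between $E^s$ and $E^u$ (Bolton) to control the oblique projections — the paper does exactly this last step via Lemma 3.3 and the bounded-ellipse argument, which is your ``projections have norm bounded in terms of $\delta$'' statement in coordinates. Where you genuinely diverge is in the key estimate on the expanding directions. The paper's Lemma 3.2 bounds $\|J_\eta(1)\|$ for unit $\eta\in E^u(\theta)$ by introducing an auxiliary stable Jacobi field $J_s$ with $J_s(0)=J_\eta(0)$, applying Rauch's comparison theorem to the difference $J_\eta-J_s$ (which vanishes at $t=0$), and using the Anosov contraction constants $C,\lambda$ on the stable piece; a symmetric argument handles $J_\xi(-1)$ for $\xi\in E^s$. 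You instead propagate Eberlein's inequality $\|J'_\xi(t)\|\le c\|J_\xi(t)\|$ along the orbit (legitimate, by invariance of the bundles) and run a Gronwall estimate on $\log\|J_\xi(t)\|$, which is well defined since stable and unstable Jacobi fields never vanish. This yields the single uniform bound $\|d\phi^t_\theta(\xi)\|\le\sqrt{1+c^2}\,e^{c}\|\xi\|$ valid for both bundles and both time directions at once, avoids Rauch entirely, and makes no use of the constants $C,\lambda$; it is arguably cleaner and is in fact the same log-derivative device the paper itself deploys later (the functions $A(t),B(t)\in[-c,c]$ in the proof of Proposition 4.3). Your explicit remark that $\|d\phi^{\pm1}_\theta\|\ge 1$ (via the flow direction), so that $\log\|d\phi^{\pm1}\|$ is nonnegative and bounded, also settles integrability slightly more carefully than the paper, which only records the upper bound. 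The one point you should make explicit is that $E^s(\theta)\oplus E^u(\theta)$ is orthogonal to $G(\theta)$ (these subspaces consist of perpendicular Jacobi fields); the paper uses this tacitly in the identity $1=\|s\xi_1+r\xi_2\|^2+\|\xi_3\|^2$, and your assembly step needs it for the same reason.
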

Before giving a proof of Theorem 3.1, it is essential to establish the following two lemmas.
\begin{lemma}
Let $M$ be a complete Riemannian manifold without conjugate points, sectional curvature bounded below by $-c^2$, for some $c>0$, and geodesic flow of Anosov type. For every $\theta\in SM$, there exists a constant $P>0$ such that for every $\xi\in E^s(\theta)$, $\eta\in E^u(\theta)$ with $\left\| \xi\right\|=\left\| \eta\right\| =1$, we have
\begin{align*}
\left\|J_\eta(1) \right\|\le P \hspace{0.2cm}\text{ and } \hspace{0.2cm} \left\|J_\xi(-1) \right\|\le P.
\end{align*}
\end{lemma}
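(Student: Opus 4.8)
The plan is to reduce the vector bound to a one–dimensional Grönwall estimate for the scalar function $u(t)=\left\|J_\eta(t)\right\|$, exploiting the fact that Eberlein's first property propagates along the entire orbit. By the time–reversal symmetry $(x,v)\mapsto(x,-v)$, which conjugates $\phi^t$ with $\phi^{-t}$ and interchanges $E^s$ and $E^u$, it suffices to treat the unstable case $\eta\in E^u(\theta)$ with $\left\|\eta\right\|=1$ and prove $\left\|J_\eta(1)\right\|\le P$; the stable estimate $\left\|J_\xi(-1)\right\|\le P$ then follows verbatim.

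First I would record two consequences of the hypotheses. Since $d\phi^t_\theta(\eta)=(J_\eta(t),J'_\eta(t))$ and $d\phi^t_\theta(\eta)\in E^u(\phi^t(\theta))$ for every $t$ by invariance of the unstable bundle, Eberlein's first property applied at the point $\phi^t(\theta)$ gives $\left\|J'_\eta(t)\right\|=\left\|K_{\phi^t\theta}(d\phi^t_\theta\eta)\right\|\le c\,\left\|d\pi_{\phi^t\theta}(d\phi^t_\theta\eta)\right\|=c\,\left\|J_\eta(t)\right\|$ for all $t\in\mathbb{R}$. Moreover, Eberlein's second property guarantees $J_\eta(t)\neq 0$ for every $t$, so $u(t)=\left\|J_\eta(t)\right\|$ is smooth and strictly positive.

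Next I would differentiate $u$. Where $J_\eta\neq 0$ one has $u'(t)=\langle J'_\eta(t),J_\eta(t)\rangle/\left\|J_\eta(t)\right\|$, so Cauchy–Schwarz followed by the inequality above yields $|u'(t)|\le\left\|J'_\eta(t)\right\|\le c\,u(t)$. Integrating this differential inequality gives $u(t)\le u(0)\,e^{c|t|}$. Finally, since the Sasaki norm satisfies $\left\|d\pi_\theta(\eta)\right\|\le\left\|\eta\right\|=1$, we have $u(0)=\left\|J_\eta(0)\right\|=\left\|d\pi_\theta(\eta)\right\|\le 1$, whence $\left\|J_\eta(1)\right\|=u(1)\le e^{c}$. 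Thus $P=e^{c}$ works, and in particular the constant is uniform in $\theta$, which is exactly what is needed to feed into Theorem 3.1.

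The only subtlety — and the step I would be most careful about — is the justification that $\left\|J'_\eta(t)\right\|\le c\,\left\|J_\eta(t)\right\|$ holds for every $t$ and not merely at $t=0$; this is precisely where the invariance of $E^u$ under $d\phi^t$ and the pointwise character of Eberlein's estimate are used, and it is what makes $c$ (hence $P$) independent of $\theta$. I would emphasize that one cannot instead appeal to a curvature–comparison bound on the forward growth of $J_\eta$, since here the sectional curvature is only assumed bounded below and the curvature tensor itself need not be bounded (so Proposition 2.2 is unavailable); the argument deliberately avoids any two–sided curvature control and relies solely on the no–conjugate–point/Anosov structure encoded in Eberlein's two properties.
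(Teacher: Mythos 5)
Your proof is correct, but it takes a genuinely different route from the paper's. The paper bounds $\left\|J_\eta(1)\right\|$ by splitting $J_\eta=(J_\eta-J_s)+J_s$, where $J_s$ is the stable Jacobi field with $J_s(0)=J_\eta(0)$: the difference vanishes at $t=0$ and is controlled by Rauch's comparison theorem (which, for fields vanishing at a point, needs only the lower curvature bound), while $J_s(1)$ is controlled by the Anosov contraction $\left\|d\phi^1_\theta|_{E^s}\right\|\le C\lambda$; the resulting constant $P_1=\left(\tfrac{1+c}{c}\right)\sinh c+C\lambda\sqrt{1+c^2}$ therefore involves the Anosov constants. You instead push Eberlein's inequality along the whole orbit via the invariance of $E^u$, obtaining $\left\|J'_\eta(t)\right\|\le c\left\|J_\eta(t)\right\|$ for all $t$, and integrate the resulting logarithmic-derivative bound to get $P=e^{c}$, uniform in $\theta$ and independent of $C$ and $\lambda$. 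This is essentially the same mechanism the paper itself deploys later (in Proposition 4.3, where $\langle J'_\xi,J_\xi\rangle/\langle J_\xi,J_\xi\rangle\in[-c,c]$), so your argument is shorter, more elementary, and arguably more in the spirit of the rest of the paper; it also makes transparent why the constant is uniform in $\theta$, which the paper's phrasing ("for every $\theta$ there exists $P$") slightly obscures even though its constant is uniform too. Two minor remarks: your appeal to Eberlein's second property is not strictly needed, since $\left\|J'_\eta(t)\right\|\le c\left\|J_\eta(t)\right\|$ together with the injectivity of $d\phi^t_\theta$ already forces $J_\eta(t)\neq 0$; and your closing claim that a curvature-comparison bound is unavailable is slightly overstated --- the paper does use one validly, precisely by arranging for the compared field to vanish at $t=0$ so that only the lower curvature bound is required. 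Neither remark affects the correctness of your argument.
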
	
\begin{proof}
Fix $\theta\in SM$ and let $\eta\in E^u(\theta)$ with $\left\| \eta\right\| =1$. Consider a stable Jacobi field $J_s$ along $\gamma_{\theta}$ such that $J_\eta(0)=J_s(0)$ and put $\omega=(J_s(0),J'_s(0))$. By item 1 of section 2.3 we have
\begin{align*}
\left\| J'_s(0)\right\|\le c \left\|J_s(0) \right\|=  c \left\|J_\eta(0) \right\|\le c 
\end{align*}
and
\begin{align*}
	\left\| \omega\right\|^2=\left\| J_s(0)\right\|^2  +\left\|J_s'(0) \right\|^2\le 1+c^2.
\end{align*}
Define the Jacobi field $J(t)= J_\eta(t)-J_s(t)$. We can see that $J$ is a perpendicular Jacobi field along $\gamma_{\theta}$ satisfying $J(0)=0$. By Rauch's comparison Theorem (see \cite{lee}) we have that
\begin{align}\label{unstable}
\left\| J(1)\right\|\le \dfrac{\sinh c}{c} \left\|J'(0)\right\| . 
\end{align} 	
Since the geodesic flow is Anosov and $\omega\in E^s(\theta)$,
\begin{align}\label{anosov}
\left\| J_s(1)\right\|\le \left\|d\phi^1_\theta (\omega) \right\|\le C\lambda\left\|\omega \right\|\le C\lambda\sqrt{1+c^2}.   
\end{align}
From \eqref{unstable} and \eqref{anosov} we have that
\begin{align*}
	\left\| J_\eta(1)\right\|&\le \left\| J(1)\right\| + \left\|J_s(1) \right\|\\
	&\le \dfrac{\sinh c}{c} \left\| J'(0)\right\| + C\lambda\sqrt{1+c^2}\\
	&\le \dfrac{\sinh c}{c}\left(\left\| J'_\eta(0)\right\| + \left\|J'_s(0) \right\|   \right)  +C\lambda\sqrt{1+c^2}\\
	&\le \left( \dfrac{1+c}{c}\right) \sinh c +C\lambda\sqrt{1+c^2}:=P_1. 
\end{align*}
Using the same technique for the stable case, there exists $P_2>0$ such that 
\begin{align*}
\left\| J_\xi(-1)\right\| \le P_2
\end{align*}
for every $\xi\in E^s(\theta)$ with $\left\| \xi\right\| =1$. Considering $P=\max\left\lbrace P_1,P_2\right\rbrace $, the conclusion of the lemma follows.
\end{proof}
We know that, with the hypothesis of Theorem 3.1, there exists a constant $\delta>0$ such that the angle between the stable and unstable subspaces is uniformly bounded below by $\delta$. As a direct consequence of this result, we have the following lemma.
\begin{lemma}
Let $M$ be a complete Riemannian manifold without conjugate points, sectional curvature bounded below by $-c^2$, for some $c>0$, and geodesic flow of Anosov type. Define the function $f:SM\rightarrow \mathbb{R}$ as
\begin{align*}
	f(\theta)=\sup \left\lbrace \left|\left\langle \xi,\eta \right\rangle  \right|: \xi\in E^s(\theta), \eta\in E^u(\theta), \left\| \xi\right\|=\left\| \eta\right\| =1  \right\rbrace .
\end{align*}
Then there exists $Q>0$ such that
\begin{align*}
\sup_{\theta\in SM} f(\theta)\le Q<1.
\end{align*}
\end{lemma}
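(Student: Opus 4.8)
The plan is to read off the conclusion directly from the uniform transversality of the stable and unstable bundles recalled just before the statement. By Bolton's theorem (\cite{bolton}) there is a constant $\delta>0$, independent of $\theta$, such that the angle between $E^s(\theta)$ and $E^u(\theta)$ is at least $\delta$ for every $\theta\in SM$. The entire content of the lemma is to translate this angle bound into a bound on the supremum of the inner products $|\langle\xi,\eta\rangle|$ that defines $f$.

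First I would fix the convention for the angle between two subspaces $V,W$ of a Euclidean space as the smallest principal angle, namely
\begin{align*}
\angle(V,W)=\arccos\Big(\sup\{|\langle\xi,\eta\rangle|:\xi\in V,\ \eta\in W,\ \|\xi\|=\|\eta\|=1\}\Big),
\end{align*}
so that $\angle(V,W)\in[0,\pi/2]$ and $\cos(\angle(V,W))$ is exactly the supremum appearing in the definition of $f$. With this convention one has the identity $f(\theta)=\cos(\angle(E^s(\theta),E^u(\theta)))$ for every $\theta\in SM$, simply because $\arccos$ is decreasing and passing to $|\langle\cdot,\cdot\rangle|$ already accounts for the sign of the vectors.

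Next I would combine this identity with the uniform bound $\angle(E^s(\theta),E^u(\theta))\ge\delta$. Since the cosine is strictly decreasing on $[0,\pi/2]$ and $\delta>0$, we obtain
\begin{align*}
f(\theta)=\cos(\angle(E^s(\theta),E^u(\theta)))\le\cos\delta<1
\end{align*}
for every $\theta\in SM$. Taking the supremum over $\theta$ and setting $Q=\cos\delta$ then yields $\sup_{\theta\in SM}f(\theta)\le Q<1$, as required.

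I do not expect a serious obstacle here: the argument is essentially a restatement of Bolton's uniform angle estimate. The only point that needs a little care is the passage from ``the angle between the subspaces is at least $\delta$'' to a bound valid for \emph{every} pair of unit vectors $\xi\in E^s(\theta)$ and $\eta\in E^u(\theta)$. This is handled cleanly once one records that $f(\theta)$ is, by definition, the cosine of the minimal principal angle, so that the worst case over all such pairs is precisely $\angle(E^s(\theta),E^u(\theta))$; consequently $|\langle\xi,\eta\rangle|\le\cos\delta$ holds for all admissible $\xi,\eta$. The uniformity in $\theta$ is immediate, as $\delta$ does not depend on $\theta$.
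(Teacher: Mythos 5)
Your proof is correct and takes exactly the route the paper intends: the paper states this lemma without proof, declaring it a ``direct consequence'' of Bolton's uniform lower bound $\delta>0$ on the angle between $E^s(\theta)$ and $E^u(\theta)$, which is precisely the estimate $f(\theta)\le\cos\delta<1$ you spell out. Your explicit convention identifying $f(\theta)$ with the cosine of the smallest principal angle is a reasonable way to make the omitted step rigorous.
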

\textbf{Proof of Theorem 3.1.} Fix $\theta \in SM$ and consider $\xi\in T_\theta SM$ with $\left\| \xi\right\| =1$. Since the geodesic flow is of Anosov type, we can write
\begin{align*}
\xi=s\xi_1+r\xi_2+\xi_3,
\end{align*}
where $\xi_1\in E^s(\theta)$, $\xi_2\in E^u(\theta)$ and $\xi_3\in \left\langle G(\theta)\right\rangle $ with $\left\|\xi_1 \right\| =\left\|\xi_2 \right\|=1$. Then
\begin{align*}
1=\left\|s\xi_1+r\xi_2 \right\|^2 + \left\| \xi_3\right\|^2.  
\end{align*} 
This implies that $\left\| \xi_3\right\| \le 1$ and $\left\| s\xi_1+r\xi_2\right\| \le 1$. We have
\begin{align*}
\left\| s\xi_1+r\xi_2\right\|^2=s^2+r^2+2sr\left\langle \xi_1,\xi_2 \right\rangle \le 1.
\end{align*}
It follows from Lemma 3.3 that the regions 
\begin{align*}
E_\beta=\left\lbrace (s,r):s^2+r^2+2sr\beta\le 1\right\rbrace 
\end{align*}
with $-Q\le \beta \le Q$ are bounded ellipses. If we consider $L=\frac{\text{diam}(E_Q)}{2}+1>0$, the ball $B$ centered in $0$ and radius $L$ contains these ellipses (see Figure 1). In particular, the parameters $s,r$ are bounded, that is $\left| s\right|, \left| r\right|\le L$. By Lemma 3.2 we have that
\begin{align*}
	\left\|d\phi^1_\theta(\xi_2) \right\|&=\sqrt{\left\| J_{\xi_2}(1)\right\|^2 + \left\|J'_{\xi_2}(1) \right\|^2  }\\
	&\le \sqrt{1+c^2}\left\| J_{\xi_2}(1)\right\|\\
	&\le \sqrt{1+c^2}P.
\end{align*}
\begin{figure}[h]
	\centering
\includegraphics[width=0.5\textwidth]{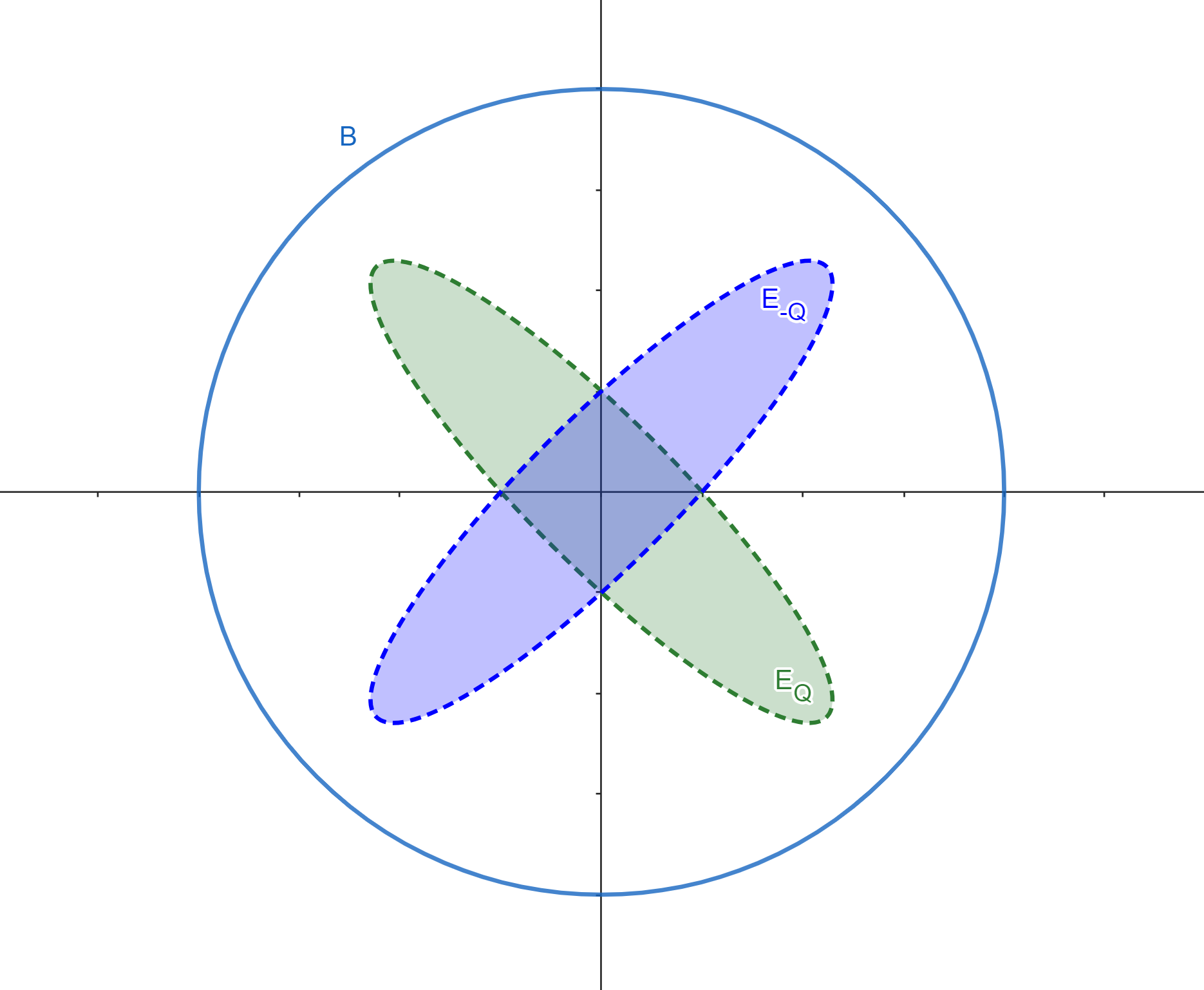}
\caption{Bounded ellipses for $Q<1$.}
\end{figure}\\
Then
\begin{align}\label{cotasup}
\left\|d\phi^1_\theta(\xi) \right\| &\le \left| s\right|\left\|d\phi^1_\theta(\xi_1) \right\| + \left| r\right|\left\|d\phi^1_\theta(\xi_2) \right\| + \left\|d\phi^1_\theta(\xi_3) \right\| \nonumber \\ 
&\le \left| s\right| C\lambda + \left| r\right|\sqrt{1+c^2}P+1 \nonumber \\ 
&\le LC\lambda + L\sqrt{1+c^2}P+1
\end{align}
for every $\xi\in T_\theta SM$ with $\left\| \xi\right\|  =1$. This implies that $\left\|d\phi^1_\theta \right\|$ is bounded and therefore the function $\log\left\|d\phi^1 \right\| $ is $\mu$-integrable, since the constants $L$ and $P$ are independent of the point $\theta$. Using the second inequality of Lemma 3.2 we obtain that $\log\left\| d\phi^{-1}\right\| $ is $\mu$-integrable.$\hfill\square$
\section{Consequences of a geodesic flow being of Anosov type}
In this section, we explore some results, based on the hyperbolicity of a geodesic flow, that will allow us to address the challenge of the non-compactness of the manifold in the proof of Ruelle's inequality. \\

From now on, let us assume that $M$ is a complete Riemannian manifold without conjugate points, sectional curvature bounded below by $-c^2$, for some $c>0$, and the geodesic flow $\phi^t:SM\rightarrow SM$ is of Anosov type. For every $\omega\in T_\theta SM$ we can write
\begin{align*}
\omega=\omega^s+\omega^u+\omega^c,
\end{align*}
where $\omega^s\in E^s(\theta)$, $\omega^u\in E^u(\theta)$ and $\omega^c\in \left\langle G(\theta)\right\rangle$.
\begin{lemma}
For $m\in \mathbb{N}$ large enough, there is $\tau_1>1$ such that for every $\theta\in SM$
\begin{align*}
\left\| d\phi^m_\theta\right\| \le \tau_1\left\| d\phi^m_\theta(\eta)\right\| 
\end{align*}
for some $\eta\in E^u(\theta)$ with $\left\| \eta\right\| =1$.
\end{lemma}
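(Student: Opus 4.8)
The plan is to exploit the fact that, under the Anosov splitting, the operator norm of $d\phi^m_\theta$ is governed by the exponential expansion along the unstable subbundle, while the stable and central contributions stay uniformly bounded. Since $T_\theta SM$ is finite-dimensional, the supremum defining $\left\| d\phi^m_\theta\right\|$ is attained at some unit vector $\xi$. I would decompose it as $\xi=s\xi_1+r\xi_2+\xi_3$ with $\xi_1\in E^s(\theta)$, $\xi_2\in E^u(\theta)$ unit vectors and $\xi_3\in \left\langle G(\theta)\right\rangle$, exactly as in the proof of Theorem 3.1, and propose $\eta=\xi_2$ as the unstable direction (once we check $r\neq 0$).

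First I would record the two estimates that drive the argument. On the one hand, the ellipse argument of Theorem 3.1, which rests on the uniform angle bound of Lemma 3.3, supplies a constant $L>1$ independent of $\theta$ with $\left| s\right|,\left| r\right|\le L$; together with $\left\| d\phi^m_\theta(\xi_1)\right\|\le C\lambda^m$ (stable contraction) and $\left\| d\phi^m_\theta(\xi_3)\right\|=\left\| \xi_3\right\|\le 1$ (the geodesic field is preserved and has unit norm), this yields the upper bound
\begin{align*}
\left\| d\phi^m_\theta\right\|=\left\| d\phi^m_\theta(\xi)\right\|\le LC\lambda^m+L\left\| d\phi^m_\theta(\eta)\right\|+1.
\end{align*}
On the other hand, applying the Anosov estimate $\left\| d\phi^{-m}\big{|}_{E^u}\right\|\le C\lambda^m$ to the vector $d\phi^m_\theta(\eta)\in E^u(\phi^m(\theta))$ produces the uniform lower bound $\left\| d\phi^m_\theta(\eta)\right\|\ge C^{-1}\lambda^{-m}$ valid for every unstable unit vector $\eta$.

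Combining the two inequalities, the ratio $\left\| d\phi^m_\theta\right\|/\left\| d\phi^m_\theta(\eta)\right\|$ is at most $L+(LC\lambda^m+1)\,C\lambda^m$, whose second summand tends to $0$ as $m\to\infty$ uniformly in $\theta$. Hence for $m$ large we obtain $\left\| d\phi^m_\theta\right\|\le (L+1)\left\| d\phi^m_\theta(\eta)\right\|$, so $\tau_1=L+1>1$ works. The one point I would settle beforehand is that the maximizing $\xi$ really has $r\neq 0$: if its unstable component vanished, then $\left\| d\phi^m_\theta(\xi)\right\|$ would be bounded above by $LC\lambda^m+1$, contradicting $\left\| d\phi^m_\theta\right\|\ge C^{-1}\lambda^{-m}$ once $m$ is large.

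I expect the main subtlety to be uniformity rather than any single estimate: both $L$ (through Lemma 3.3) and the Anosov constants $C,\lambda$ must be independent of the base point, so that a single $m$ and a single $\tau_1$ serve simultaneously for all $\theta\in SM$; this is precisely what the standing hypotheses of this section guarantee. Everything else reduces to the routine comparison of a quantity that stays bounded against one that grows like $\lambda^{-m}$.
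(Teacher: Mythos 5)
Your argument is correct and follows essentially the same route as the paper: decompose a unit vector into stable, unstable, and flow components, bound the coefficients uniformly by the constant $L$ coming from the angle estimate of Lemma 3.3, contract the stable part by $C\lambda^m$, and compare against the uniform expansion $\left\| d\phi^m_\theta(\eta)\right\|\ge C^{-1}\lambda^{-m}$ on $E^u(\theta)$. The only cosmetic differences are that the paper splits into the cases $\omega^u=0$ and $\omega^u\neq 0$ (arriving at $\tau_1=2L+1$) rather than arguing directly with the maximizing vector and ruling out $r=0$ by contradiction as you do.
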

\begin{proof}
Fix $\theta\in SM$ and let $\omega=\omega^s+\omega^u+\omega^c\in T_\theta SM$ with $\left\| \omega\right\| =1$. This implies that $\left\|\omega^s+\omega^u \right\|\le 1 $ and $\left\|\omega^c \right\|\le 1 $. Moreover, we know that $\left\| \omega^s\right\| \le L$ and $\left\| \omega^u\right\| \le L$ (see Section 3). Consider $m\in \mathbb{N}$ large enough such that $C\lambda^m<1/2$.\\
Case 1: $\omega^u=0$.\\
Since the geodesic flow is Anosov we have that
\begin{align*}
\left\|d\phi^m_\theta(\omega) \right\|&\le \left\| d\phi^m_\theta(\omega^s)\right\| +\left\|d\phi^m_\theta(\omega^c) \right\|  \\
&\le  C\lambda^m + 1\\
&<C^{-1}\lambda^{-m}\\
&\le \left\| d\phi^m_\theta(\eta)\right\| 
\end{align*}
for every $\eta\in E^u(\theta)$ with $\left\| \eta\right\| =1$.\\
Case 2: $\omega^u\neq 0$.\\
Since the geodesic flow is Anosov we have that
\begin{align*}
\left\| d\phi^m_\theta(\omega^s)\right\|\le C\lambda^m L<C^{-1}\lambda^{-m}L\le L \dfrac{\left\| d\phi^m_\theta(\omega^u)\right\|}{\left\|\omega^u \right\| }.
\end{align*}
Then
\begin{align*}
\left\| d\phi^m_\theta(\omega)\right\|&\le \left\| d\phi^m_\theta(\omega^s)\right\| + \left\| d\phi^m_\theta(\omega^u)\right\|+\left\| d\phi^m_\theta(\omega^c)\right\|\\
&\le L \dfrac{\left\| d\phi^m_\theta(\omega^u)\right\|}{\left\|\omega^u \right\| } +  L \dfrac{\left\| d\phi^m_\theta(\omega^u)\right\|}{\left\|\omega^u \right\| }+1\\
&< (2L+1)\dfrac{\left\| d\phi^m_\theta(\omega^u)\right\|}{\left\|\omega^u \right\| }.
\end{align*}
If we consider $\tau_1=2L+1$, in both cases we have that
\begin{align*}
	\left\| d\phi^m_\theta(\omega)\right\|\le \tau_1 \left\| \left. d \phi^m_\theta\right|_{E^u(\theta)} \right\| 
\end{align*}
for every $\omega\in T_\theta SM$ with $\left\| \omega\right\| =1$. Since the norm is always attained in a finite-dimensional space, we conclude the proof of the lemma.
\end{proof}
\begin{lemma}
For $m\in \mathbb{N}$ large enough, there is $\tau_2\in (0,1)$ such that for every $\theta\in SM$
\begin{align*}
	\left\|d\phi^m_\theta \right\|^*\ge \tau_2 \left\| d\phi^m_\theta(\xi)  \right\|  
\end{align*}
for some $\xi\in E^s(\theta)$ with $\left\| \xi\right\| =1$, where $\displaystyle \left\| d\phi^m_\theta\right\|^*=\inf_{\left\|v \right\| =1}\left\|d\phi^m_\theta(v) \right\|$.
\end{lemma}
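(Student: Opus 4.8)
The plan is to deduce this statement from Lemma 4.1 by a duality argument, trading the co-norm of $d\phi^m_\theta$ for the operator norm of its inverse. The starting point is the elementary identity, valid for any linear isomorphism $A$ of inner product spaces,
\begin{align*}
\left\| A\right\|^*=\inf_{\left\| v\right\|=1}\left\| Av\right\|=\frac{1}{\left\| A^{-1}\right\|},
\end{align*}
obtained by the substitution $w=Av$ in $\left\| A^{-1}\right\|=\sup_{w\neq 0}\left\| A^{-1}w\right\|/\left\| w\right\|$. Since $d\phi^{-m}_{\phi^m(\theta)}=(d\phi^m_\theta)^{-1}$, this gives $\left\| d\phi^m_\theta\right\|^*=\left\| d\phi^{-m}_{\phi^m(\theta)}\right\|^{-1}$, so it suffices to bound $\left\| d\phi^{-m}_{\omega}\right\|$ from above at the point $\omega=\phi^m(\theta)$, and this is exactly the type of estimate provided by Lemma 4.1.

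The key step is to observe that Lemma 4.1 has a time-reversed counterpart. The reversed flow $\phi^{-t}$ is again Anosov, with the roles of $E^s$ and $E^u$ interchanged: the Anosov bounds give $\left\| d\phi^{-m}_\theta|_{E^u}\right\|\le C\lambda^m$ and $\left\| d\phi^{-m}_\theta(\xi)\right\|\ge C^{-1}\lambda^{-m}\left\| \xi\right\|$ for $\xi\in E^s(\theta)$, while the bound $\left\| \omega^s\right\|,\left\| \omega^u\right\|\le L$ from Section 3 and the identity $\left\| d\phi^{-m}_\theta(\omega^c)\right\|=\left\| \omega^c\right\|$ persist. Repeating the two-case argument of Lemma 4.1 verbatim with $E^s$ and $E^u$ exchanged (or, more invariantly, conjugating by the flip $\mathcal{F}(x,v)=(x,-v)$, which satisfies $\phi^{-t}=\mathcal{F}\circ\phi^t\circ\mathcal{F}$ and $d\mathcal{F}(E^s)=E^u$), I obtain a constant $\tilde\tau_1>1$, independent of the point, such that for $m$ large and every $\omega\in SM$ one has $\left\| d\phi^{-m}_\omega\right\|\le \tilde\tau_1\left\| d\phi^{-m}_\omega(\zeta)\right\|$ for some unit $\zeta\in E^s(\omega)$.

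Finally I would assemble the pieces. Applying the reversed estimate at $\omega=\phi^m(\theta)$ and using flow-invariance $E^s(\phi^m(\theta))=d\phi^m_\theta(E^s(\theta))$, write $\zeta=d\phi^m_\theta(\xi')$ with $\xi'\in E^s(\theta)$; then $d\phi^{-m}_\omega(\zeta)=\xi'$ and, setting $\xi=\xi'/\left\| \xi'\right\|$, the unit vector $\xi\in E^s(\theta)$ satisfies $\left\| d\phi^m_\theta(\xi)\right\|=\left\| \xi'\right\|^{-1}=\left\| d\phi^{-m}_\omega(\zeta)\right\|^{-1}$. Combining with the first paragraph,
\begin{align*}
\left\| d\phi^m_\theta\right\|^*=\frac{1}{\left\| d\phi^{-m}_\omega\right\|}\ge \frac{1}{\tilde\tau_1\left\| d\phi^{-m}_\omega(\zeta)\right\|}=\frac{1}{\tilde\tau_1}\left\| d\phi^m_\theta(\xi)\right\|,
\end{align*}
so the lemma holds with $\tau_2=\tilde\tau_1^{-1}\in(0,1)$. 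I expect the only real obstacle to be a careful verification of the time-reversed Lemma 4.1: one must check that every inequality in the original proof survives the exchange of the stable and unstable bundles, in particular that the lower expansion estimate now required on $E^s$ is the correct symmetric analogue; the remaining bookkeeping identifying $\zeta$ with $\xi$ is routine. As an alternative avoiding the reversed lemma, one could argue directly as in Lemma 4.1, taking $\xi$ to realize $\left\| d\phi^m_\theta|_{E^s}\right\|^*$ and using the uniform lower angle bound between $E^s$, $E^u$ and $\langle G\rangle$ to show that any unit vector with a non-negligible unstable or central component is expanded far more than this $\xi$.
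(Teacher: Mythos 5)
Your proof is correct, but it follows a genuinely different route from the paper's. The paper proves Lemma 4.2 directly: it fixes a threshold $\varepsilon$, splits unit vectors $\omega$ into three cases according to whether $\left\| d\phi^m_\theta(\omega^u+\omega^c)\right\|\ge\varepsilon$, and in the delicate case (where the image is dominated by the stable part) it controls $\left\| d\phi^m_\theta(\omega^s)\right\|$ by $\delta\left\| d\phi^m_\theta(\omega)\right\|$ using the parallel projection $P_{\phi^m(\theta)}$ onto $E^s$ along $E^u\oplus\langle G\rangle$, whose norm is uniformly bounded thanks to the uniform angle between the invariant subbundles; this yields $\tau_2=1/2\delta$. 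You instead exploit the duality $\left\| A\right\|^*=\left\| A^{-1}\right\|^{-1}$ together with the time-reversal symmetry of the Anosov property: the identity $\left\| d\phi^m_\theta\right\|^*=\left\| d\phi^{-m}_{\phi^m(\theta)}\right\|^{-1}$ reduces the co-norm estimate to an upper bound on $\left\| d\phi^{-m}\right\|$ in terms of its restriction to $E^s$, which is exactly Lemma 4.1 for the reversed flow (with $E^s$ and $E^u$ exchanged, and the same constant $\tilde\tau_1=2L+1$ since $L$ depends only on the symmetric angle bound $Q$). The bookkeeping identifying the unit vector $\zeta\in E^s(\phi^m(\theta))$ with a unit $\xi\in E^s(\theta)$ via flow-invariance is correct, and the reversed Lemma 4.1 does go through verbatim, since all the inputs (the bounds $\left\|\omega^s\right\|,\left\|\omega^u\right\|\le L$, the contraction of $E^u$ under $\phi^{-m}$, the expansion of $E^s$ under $\phi^{-m}$, and the isometry on $\langle G\rangle$) are symmetric under time reversal. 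Your approach buys economy: it avoids the $\varepsilon$-dependent case analysis and the projection-norm estimate from \cite{ipsen} entirely, reusing the already-proved expansion lemma; the paper's direct argument, on the other hand, makes the geometric mechanism (the uniform transversality of $E^s$ and $E^{cu}$) explicit and produces a constant with a different, equally uniform, dependence. Both yield a point-independent $\tau_2\in(0,1)$, which is all that Proposition 4.3 requires.
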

\begin{proof} Let $\varepsilon>0$ and consider $m\in \mathbb{N}$ large enough such that $\varepsilon\ge (L+1)C\lambda^m$ and $\sqrt{1-\varepsilon^2} >\varepsilon C\lambda^m$, where $L$ comes from Section 3. Fix $\theta\in SM$ and define the following set
\begin{align*}
\Gamma_{\theta,\varepsilon,m}:=\left\lbrace \omega\in T_\theta SM: \left\| \omega\right\| =1,  \omega=\omega^s+\omega^u+\omega^c \hspace{0.2cm} \text{and} \hspace{0.2cm} \left\| d\phi^m_\theta(\omega^u+\omega^c) \right\| \ge \varepsilon  \right\rbrace .
\end{align*}
Case 1: $\omega\in \Gamma_{\theta,\varepsilon,m}$ with $\omega^s\neq 0$.\\
Since the geodesic flow is Anosov,
\begin{align}\label{gamma1}
	\left\| d\phi^m_\theta(\omega) \right\| 
	&\ge  \left\| d\phi^m_\theta(\omega^u+\omega^c) \right\| -\left\| d\phi^m_\theta(\omega^s) \right\| \nonumber\\
	&\ge\left\| d\phi^m_\theta(\omega^u+\omega^c) \right\| - C\lambda^m \left\| \omega^s\right\|. 
\end{align}
As $\omega\in \Gamma_{\theta,\varepsilon,m}$ we have that
\begin{align}\label{gamma2}
\left\| d\phi^m_\theta(\omega^u+\omega^c) \right\|\ge \varepsilon\ge (L+1)C\lambda^m\ge (\left\| \omega^s\right\| +1)C\lambda^m.
\end{align}
Then from \eqref{gamma1} and \eqref{gamma2}
\begin{align*}
\left\| d\phi^m_\theta(\omega) \right\| \ge \left\| d\phi^m_\theta(\omega^u+\omega^c) \right\|-C\lambda^m \left\| \omega^s\right\| \ge C\lambda^m \ge \dfrac{\left\| d\phi^m_\theta(\omega^s) \right\|}{\left\| \omega^s \right\|}.
\end{align*}
Case 2: $\omega\in \Gamma_{\theta,\varepsilon,m}$ with $\omega^s=0 $.\\
Since the geodesic flow is Anosov,
\begin{align*}
\left\| d\phi^m_\theta(\omega) \right\| = \left\| d\phi^m_\theta(\omega^u+\omega^c) \right\|\ge \varepsilon>C\lambda^m\ge \left\|d\phi^m_\theta(\xi) \right\| 
\end{align*}
for every $\xi\in E^s(\theta)$ with $\left\|\xi \right\| =1$.\\
Case 3: $\omega\notin \Gamma_{\theta,\varepsilon,m}$ and $\left\|w\right\|=1$.\\
We have that
\begin{align*}
\varepsilon^2&>\left\|d\phi^m_\theta(\omega^u+\omega^c) \right\|^2= \left\|d\phi^m_\theta(\omega^u) \right\|^2+\left\|d\phi^m_\theta(\omega^c) \right\|^2=\left\|d\phi^m_\theta(\omega^u) \right\|^2+\left\|\omega^c \right\|^2.
\end{align*}
Then $\left\|\omega^c \right\|<\varepsilon$ and $\left\|d\phi^m_\theta(\omega^u) \right\|<\varepsilon$. Since the geodesic flow is Anosov,
\begin{align*}
C^{-1}\lambda^{-m}\left\| \omega^u\right\| \le \left\| d\phi^m_\theta(\omega^u)\right\|<\varepsilon.  
\end{align*}
This implies that $\left\|\omega^u \right\|<\varepsilon C\lambda^m $. On the other hand, as $\left\|w\right\|=1$, then
\begin{align*}
\left\| \omega^u\right\| + \left\| \omega^s\right\| \ge \left\| \omega^u+\omega^s\right\|=\sqrt{1-\left\| \omega^c\right\|^2}>\sqrt{1-\varepsilon^2 } .
\end{align*}
Furthermore
\begin{align}\label{v_s}
L\ge \left\| \omega^s\right\| > \sqrt{1-\varepsilon^2} - \varepsilon C\lambda^m>0.
\end{align}
In particular, $\omega^s\neq 0$. Denote by
\begin{align*}
	E^{cu}(\theta):=E^u(\theta)\oplus\left\langle G(\theta)\right\rangle
\end{align*}
and define the following linear map
\begin{align*}
P_\theta: T_\theta SM\rightarrow E^s(\theta) 
\end{align*}
as the parallel projection onto $E^s(\theta)$ along $E^{cu}(\theta)$. Since the angle between the stable and unstable subspaces is uniformly away from $0$ for every $\theta\in SM$, then there is $\delta\ge 1$ such that
\begin{align*}
	\left\|P_\theta(\omega) \right\| \le \delta\left\| \omega\right\| 
\end{align*}
for every $\theta\in SM$ and $\omega\in T_\theta SM$ (see Theorem 3.1 in \cite{ipsen}). Then
\begin{align*}
\left\|d\phi^m_\theta(\omega^s) \right\|=\left\|P_{\phi^m(\theta)}(d\phi^m_\theta(\omega)) \right\|  \le \delta\left\|d\phi^m_\theta(\omega) \right\|. 
\end{align*}
By \eqref{v_s}, if we choose $\varepsilon>0$ such that $\left\| \omega^s\right\| \ge 1/2$, we have that
\begin{align*}
\left\| d\phi^m_\theta(\omega)\right\| \ge \dfrac{1}{2\delta}\dfrac{\left\| d\phi^m_\theta(\omega^s)\right\| }{\left\| \omega^s\right\| }.
\end{align*}
If we consider $\tau_2= 1/2\delta$, in all cases we have that for all for every $\omega\in T_\theta SM$ with $\left\| \omega\right\| =1$ there is $\xi \in E^s(\theta)$, with $\left\|\xi\right\|=1$ such that 
$$\left\| d_\theta\phi^m(\omega)\right\| \ge \tau_2 \left\|  d_\theta\phi^m(\xi)\right\|.$$
Since the infimum is always attained in a finite-dimensional space, the last inequality concludes the proof of the lemma.
\end{proof}
Clearly $\left\| d\phi^m_\theta\right\|^* \le \left\| d\phi^m_\theta\right\| $ for every $\theta\in SM$. From Lemmas 4.1 and 4.2, we can obtain a positive constant, independent of $\theta$, such that the direction of the inequality changes.
\begin{proposition}
For $m\in \mathbb{N}$ large enough, there is $\kappa>1$, depending on $m$, such that
\begin{align*}
	\left\| d\phi^m_\theta\right\|\le \kappa\left\| d\phi^m_\theta\right\|^*  
\end{align*}
for every $\theta\in SM$.
\end{proposition}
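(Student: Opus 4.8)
The plan is to bound the operator norm $\|d\phi^m_\theta\|$ from above by the expansion along the unstable direction and the conorm $\|d\phi^m_\theta\|^*$ from below by the contraction along the stable direction, and then to compare these two quantities by a factor that depends only on $m$ and not on $\theta$. Fix $m$ large enough that both Lemma 4.1 and Lemma 4.2 apply. By Lemma 4.1 there is a unit vector $\eta\in E^u(\theta)$ with $\|d\phi^m_\theta\|\le \tau_1\|d\phi^m_\theta(\eta)\|$, so the norm is controlled by the unstable growth; and by Lemma 4.2 there is a unit vector $\xi\in E^s(\theta)$ with $\|d\phi^m_\theta\|^*\ge \tau_2\|d\phi^m_\theta(\xi)\|$, so the conorm is controlled from below by the stable growth. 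Since $\tau_1$ and $\tau_2$ are independent of $\theta$, the whole problem reduces to comparing $\|d\phi^m_\theta(\eta)\|$ with $\|d\phi^m_\theta(\xi)\|$ uniformly in $\theta$.

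For this comparison I would invoke the uniform two-sided control coming from Theorem 3.1, which shows that $\|d\phi^{\pm 1}_\theta\|$ is bounded by a constant $A>0$ independent of $\theta$. Iterating the cocycle gives $\|d\phi^m_\theta(\eta)\|\le \|d\phi^m_\theta\|\le A^m$ for the unstable vector; and since the conorm equals $1/\|d\phi^{-m}_{\phi^m(\theta)}\|$, the bound $\|d\phi^{-m}\|\le A^m$ gives $\|d\phi^m_\theta(\xi)\|\ge \|d\phi^m_\theta\|^*\ge A^{-m}$ for the stable vector. Hence $\|d\phi^m_\theta(\eta)\|\le A^{2m}\,\|d\phi^m_\theta(\xi)\|$ for every $\theta$. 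One could equally keep these estimates phrased through the Anosov rate $\lambda$ and the uniform Jacobi-field bound $P$ of Lemma 3.2, which again produces an $m$-dependent but $\theta$-independent factor.

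Chaining the three estimates yields $\|d\phi^m_\theta\|\le \tau_1\|d\phi^m_\theta(\eta)\|\le \tau_1 A^{2m}\|d\phi^m_\theta(\xi)\|\le \tau_1\tau_2^{-1}A^{2m}\,\|d\phi^m_\theta\|^*$, so one may take $\kappa=\tau_1\tau_2^{-1}A^{2m}>1$, depending on $m$ but not on $\theta$. (That $\kappa$ grows with $m$ is expected and harmless, since the ratio of maximal expansion to minimal contraction of an Anosov system is of order $\lambda^{-2m}$.)

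The main obstacle is precisely the uniform comparison in the middle step. The Anosov condition by itself furnishes only one-sided estimates, namely a lower bound for the unstable expansion and an upper bound for the stable contraction; what is needed here is the reverse pair — an upper bound for the expansion and a lower bound for the contraction — and these must hold uniformly over the non-compact manifold. This is exactly the content supplied by the uniform bound on $\|d\phi^{\pm 1}\|$ from Theorem 3.1 (equivalently by the uniform Jacobi-field estimates of Lemma 3.2), and it is what prevents $\kappa$ from degenerating as $\theta$ ranges over $SM$.
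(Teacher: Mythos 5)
Your proof is correct, but it replaces the paper's central comparison step with a different and considerably more elementary argument. Both proofs begin identically, reducing via Lemmas 4.1 and 4.2 to a uniform comparison of $\left\| d\phi^m_\theta(\eta)\right\|$ ($\eta\in E^u(\theta)$ unit) with $\left\| d\phi^m_\theta(\xi)\right\|$ ($\xi\in E^s(\theta)$ unit). At that point the paper works geometrically: it introduces the ratio $r(t)=\lambda^{-t}\left\| J_\xi(t)\right\|/\bigl(\lambda^{t}\left\| J_\eta(t)\right\|\bigr)$ of the associated stable and unstable Jacobi fields, controls $r'/r$ via Eberlein's estimate $\left\| J'\right\|\le c\left\| J\right\|$ and the bound $\lambda\ge e^{-c}$, and integrates to get $\left\| J_\eta(m)\right\|\le r(0)^{-1}e^{2cm}\left\| J_\xi(m)\right\|$, with $r(0)^{-1}\le\sqrt{1+c^2}$; this yields $\kappa=\tau_1\tau_2^{-1}(1+c^2)e^{2cm}$. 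You instead use the crude two-sided bound: the uniform estimate $\left\| d\phi^{\pm1}_\theta\right\|\le A$ from Theorem 3.1 gives $\left\| d\phi^m_\theta\right\|\le A^m$ by submultiplicativity and $\left\| d\phi^m_\theta\right\|^*=1/\left\| d\phi^{-m}_{\phi^m(\theta)}\right\|\ge A^{-m}$, whence $\kappa=\tau_1\tau_2^{-1}A^{2m}$. This is valid (and in fact your two endpoint bounds already give $\left\| d\phi^m_\theta\right\|\le A^{2m}\left\| d\phi^m_\theta\right\|^*$ directly, making your invocations of Lemmas 4.1 and 4.2 redundant). What each approach buys: yours is shorter and needs no Jacobi-field machinery; the paper's yields the sharper base $e^{2c}$ rather than $A^2=h(c)^2$ in the exponential, which feeds into the explicit rate $\lim_m\frac{1}{m}\log\varrho_m=-\log h(c)-2c$ quoted in the proof of Theorem 1.1 and into the diameter estimate in Lemma 5.3. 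Since those later arguments only require $\kappa$ to grow at a uniform exponential rate in $m$, your weaker constant would still suffice, so there is no gap — only a quantitatively coarser $\kappa$.
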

\begin{proof} From Lemma 4.1 we have that
\begin{align*}
\left\| d \phi^m_\theta\right\|\le \tau_1 \left\| d \phi^m_\theta(\eta)\right\|    
\end{align*}
for some $\eta\in E^u(\theta)$ with $\left\|\eta\right\|=1$. Denote by $J_\eta$ the Jacobi field associated to $\eta$. Since the geodesic flow is of Anosov type and $d\phi^t_\theta(\eta)=(J_\eta(t),J'_\eta(t))$, we have from item 1 of Section 2.3 that
\begin{align}\label{uno}
\left\| d \phi^m_\theta\right\|\le \tau_1 \left\| d \phi^m_\theta(\eta)\right\|=\tau_1 \sqrt{\left\|J_\eta(m) \right\|^2 +\left\|J'_\eta(m) \right\|^2  } \le \tau_1\sqrt{1+c^2}\left\| J_\eta(m) \right\|. 
\end{align}
In the same way, by Lemma 4.2 we have that
\begin{align}\label{dos}
\left\| d\phi^m_\theta\right\|^*\ge \tau_2 \left\| d\phi^m_\theta(\xi)\right\|=\tau_2\sqrt{1+\dfrac{\left\| J'_\xi(m)\right\|^2 }{\left\| J_\xi(m) \right\|^2 }}\left\| J_\xi(m) \right\|,
\end{align}
for some $\xi\in E^s(\theta)$ with $\left\|\xi\right\|=1$, where $J_\xi$ is the Jacobi field associated to $\xi$. Moreover,
\begin{align}\label{tres}
\sqrt{1+c^2}\le \sqrt{1+c^2}\sqrt{1+\dfrac{\left\| J'_\xi(m)\right\|^2 }{\left\| J_\xi(m) \right\|^2 }}.
\end{align}
Define the function
\begin{align*}
r:&[0,+\infty) \rightarrow \hspace{0.4cm}\mathbb{R}\\
&\hspace{0.6cm}t\hspace{0.65cm}\rightarrow \dfrac{\lambda^{-t}\left\| J_\xi(t)\right\| }{\lambda^t \left\| J_\eta(t)\right\| }.
\end{align*}
This function is well-defined because the stable and unstable Jacobi fields are never zero since the manifold has no conjugate points (see Section 2). We have that
\begin{align*}
	r'(t)=r(t)\left(-2\log \lambda + \dfrac{\left\langle J'_\xi(t),J_\xi(t) \right\rangle }{\left\langle J_\xi(t),J_\xi(t) \right\rangle } - \dfrac{\left\langle J'_\eta(t),J_\eta(t) \right\rangle }{\left\langle J_\eta(t),J_\eta(t) \right\rangle } \right). 
\end{align*}
Also
\begin{align*}
A(t)=\dfrac{ \left\langle J'_\xi(t),J_\xi(t) \right\rangle }{\left\langle J_\xi(t),J_\xi(t) \right\rangle }\in [-c,c] \hspace{0.4cm}\text{and} \hspace{0.4cm}B(t)=\dfrac{ \left\langle J'_\eta(t),J_\eta(t) \right\rangle }{\left\langle J_\eta(t),J_\eta(t) \right\rangle }\in [-c,c].
\end{align*}
Since the curvature is bounded below by $-c^2$, then $\lambda\ge e^{-c}$ (see \cite{rigidity}). Therefore
\begin{align*}
-2\log \lambda-2c\le -2\log \lambda +A(t) - B(t)\le -2\log \lambda + 2c.
\end{align*}
This implies that 
\begin{align*}
-2\log\lambda - 2c\le \dfrac{r'(t)}{r(t)} \le -2\log \lambda + 2c
\end{align*}
and
\begin{align*}
r(0)\cdot e^{(-2\log \lambda-2c)t}\le r(t)\le r(0)\cdot e^{(-2\log \lambda+2c)t}.
\end{align*}
Therefore
\begin{align*}
r(0)^{-1}\cdot e^{(2\log\lambda - 2c)t}\le \dfrac{1}{r(t)}\le r(0)^{-1}\cdot e^{(2\log\lambda+2c)t}.
\end{align*}
For $t=m$ we have that
\begin{align}\label{cuatro}
\left\|J_\eta(m) \right\|\le r(0)^{-1}\cdot e^{(2\log\lambda+2c)m}\cdot\lambda^{-2m}\left\| J_\xi(m) \right\|=r(0)^{-1}\cdot e^{2cm}\left\| J_\xi(m) \right\|.  
\end{align}
From \eqref{uno}, \eqref{dos}, \eqref{tres} and \eqref{cuatro}
\begin{align}\label{cotaK}
	\left\| d\phi^m_\theta\right\|&\le \tau_1 \sqrt{1+c^2}\left\|J_\eta(m) \right\| \nonumber\\
	&\le  \tau_1\sqrt{1+c^2}\cdot r(0)^{-1}\cdot e^{2cm}\left\| J_\xi(m) \right\| \nonumber \\
	&  \le \tau_1\sqrt{1+c^2}\sqrt{1+\dfrac{\left\| J'_\xi(m)\right\|^2 }{\left\| J_\xi(m) \right\|^2 }}\cdot r(0)^{-1}\cdot e^{2cm}\left\| J_\xi(m) \right\|.
\end{align}
From item 1 of Section 2.3 we have that
\begin{align*}
	1=\left\|\xi \right\|^2=\left\|d\pi_\theta(\xi) \right\|^2 + \left\| K_\theta(\xi)\right\|^2\le (1+c^2)  \left\|d\pi_\theta(\xi) \right\|^2.
\end{align*}
Since $1=\left\|\eta \right\|^2=\left\|d\pi_\theta(\eta) \right\|^2 + \left\| K_\theta(\eta)\right\|^2$, the last inequality implies that
\begin{align*}
r(0)^{-1}=\dfrac{\left\|d\pi_\theta(\eta) \right\|}{\left\|d\pi_\theta(\xi) \right\|}\le\dfrac{1}{\left\|d\pi_\theta(\xi) \right\|}\le  \sqrt{1+c^2}.
\end{align*}
Therefore, substituting in \eqref{cotaK} and using \eqref{dos}
\begin{align*}
	\left\| d\phi^m_\theta\right\|\le \kappa \left\| d\phi^m_\theta\right\|^*,
\end{align*}
where $\kappa=\tau_1\cdot\tau_2^{-1}\cdot (1+c^2)\cdot e^{2cm}>1$. 
\end{proof}
On the other hand, since the geodesic flow is of Anosov type, we have that the norm $\left\| d\phi^m_\theta\right\|$ is bounded between two positive constants.
\begin{proposition}
For $m\in \mathbb{N}$ large enough, there are constants $K_1, K_2>0$, $K_1$ depending on $m$, such that
\begin{align*}
	K_2<\left\| d\phi^m_\theta\right\| <K_1
\end{align*}
for every $\theta\in SM$.
\end{proposition}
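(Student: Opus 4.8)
The plan is to obtain the two bounds by entirely different mechanisms: the upper bound from submultiplicativity of the operator norm together with the uniform one-step estimate already produced in the proof of Theorem 3.1, and the lower bound from the invariance of the geodesic vector field.

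For the upper bound, I would recall that inequality \eqref{cotasup} in the proof of Theorem 3.1 furnishes a constant
\[
K:=LC\lambda+L\sqrt{1+c^2}\,P+1>0,
\]
\emph{independent of} $\theta$, such that $\left\|d\phi^1_\theta\right\|\le K$ for every $\theta\in SM$. Writing $\phi^m$ as the $m$-fold composition $\phi^1\circ\cdots\circ\phi^1$ and applying the chain rule gives
\[
d\phi^m_\theta=d\phi^1_{\phi^{m-1}(\theta)}\circ d\phi^1_{\phi^{m-2}(\theta)}\circ\cdots\circ d\phi^1_{\phi^0(\theta)}.
\]
Since the operator norm is submultiplicative and each factor is bounded by $K$ uniformly in the base point, I would conclude
\[
\left\|d\phi^m_\theta\right\|\le\prod_{i=0}^{m-1}\left\|d\phi^1_{\phi^i(\theta)}\right\|\le K^m=:K_1,
\]
a constant depending on $m$ but not on $\theta$.

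For the lower bound, the key observation is that the geodesic vector field $G(\theta)$ is a \emph{unit} vector of $T_\theta SM$ in the Sasaki metric: in horizontal--vertical coordinates $j_\theta(G(\theta))=(v,0)$, so $\left\|G(\theta)\right\|^2=\left\|v\right\|^2=1$. Because $G$ is invariant under its own flow, $d\phi^m_\theta(G(\theta))=G(\phi^m(\theta))$, which again has unit norm. Hence
\[
\left\|d\phi^m_\theta\right\|\ge\frac{\left\|d\phi^m_\theta(G(\theta))\right\|}{\left\|G(\theta)\right\|}=1>\tfrac12,
\]
so $K_2=\tfrac12$ works for every $\theta\in SM$ and is independent of $m$. (Alternatively, for a unit vector $\eta\in E^u(\theta)$ the Anosov estimate $\|d\phi^{-m}_{\phi^m(\theta)}|_{E^u}\|\le C\lambda^m$ forces $\|d\phi^m_\theta(\eta)\|\ge C^{-1}\lambda^{-m}$, giving the bound directly from expansion in the unstable direction.)

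The proposition is therefore essentially a corollary of Theorem 3.1, and I do not expect a genuine obstacle. The only point that requires care is the uniformity in $\theta$ of the one-step bound $K$; but this is exactly what \eqref{cotasup} provides, resting in turn on the uniform lower bound $\delta$ for the angle between $E^s$ and $E^u$ (Bolton's theorem) and on Lemma 3.2. Everything else is the standard cocycle estimate, so no new geometric input beyond Section 3 is needed.
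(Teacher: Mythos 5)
Your proposal is correct, and the upper bound is exactly the paper's argument: the paper also takes the uniform one-step estimate \eqref{cotasup}, rewrites it as an explicit function $h(c)$ of the curvature bound, and sets $K_1=h(c)^m$ by submultiplicativity. The only genuine difference is in the lower bound. The paper takes a unit vector $\eta\in E^u(\theta)$ and uses the Anosov expansion $\left\|d\phi^m_\theta(\eta)\right\|\ge C^{-1}\lambda^{-m}>C^{-1}$ to get $K_2=1/C$ — this is precisely the alternative you mention in parentheses. Your primary argument via the flow direction, $d\phi^m_\theta(G(\theta))=G(\phi^m(\theta))$ with $\left\|G(\theta)\right\|=1$ in the Sasaki metric (since $j_\theta(G(\theta))=(v,0)$ and $K_\theta(G(\theta))=0$ for a geodesic), is also valid and is strictly more elementary: it uses nothing but the fact that $\phi^t$ is a flow on the unit tangent bundle, with no hyperbolicity input at all. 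Both routes produce a $K_2$ independent of $m$, which is what matters downstream: Corollary 4.5 sets $\beta=K_2/K_1$, and in the proof of Theorem 1.1 only the exponential rate of $K_1$ in $m$ enters the limit $\frac{1}{m}\log(\varrho_m)$, so either choice of $K_2$ works without modification. One cosmetic remark: the paper prefers the expanded form $h(c)$ over your $K=LC\lambda+L\sqrt{1+c^2}P+1$ only because it later quotes $\log(h(c))$ explicitly in that limit; the two constants bound the same quantity.
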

\begin{proof} Fix $\theta\in SM$. Since the geodesic flow is of Anosov type, for $\eta\in E^u(\theta)$ with $\left\|\eta \right\| =1$ we have that
\begin{align*}
	\left\| d\phi^m_\theta\right\|\ge \left\| d\phi^m_\theta(\eta)\right\|\ge C^{-1}\lambda^{-m}>C^{-1},
\end{align*}
then $K_2=1/C$. On the other hand, from \eqref{cotasup} we have that
\begin{align}\label{betanov}
\left\|d\phi^1_\theta\right\| &\le LC\lambda + L\sqrt{1+c^2}\left( \left( \dfrac{1+c}{c}\right) \sinh c+C\lambda\sqrt{1+c^2}\right) +1\nonumber \\
	&\le LC\lambda + L\sqrt{1+c^2}\left( \dfrac{1+c}{c}\right) \sinh c + LC\lambda(1+c^2)+1\nonumber \\
	&\le 2LC\lambda + LC\lambda c^2 +L\sqrt{1+c^2}\left( \dfrac{1+c}{c}\right) \sinh c + 1:=h(c).
\end{align}
Then, we can consider $K_1=h(c)^m$.
\end{proof}
A direct consequence of Proposition 4.4 is the following result.
\begin{corollary}
Given $\varepsilon>0$, there is $\beta\in (0,1)$, depending on $m$, such that
\begin{align*}
	\beta \left\| d\phi^m_{\tilde{\theta}}\right\|< \left\| d\phi^m_\theta\right\|, \hspace{0.4cm}\forall\hspace{0.1cm} \tilde{\theta}\in SM: d(\theta,\tilde{\theta})<\varepsilon
\end{align*}
for every $\theta\in SM$.
\end{corollary}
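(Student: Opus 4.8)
The plan is to read this off directly from the uniform two-sided bound established in Proposition 4.4, which already holds at every point of $SM$ with constants independent of $\theta$. In particular, the hypothesis $d(\theta,\tilde\theta)<\varepsilon$ will play no essential role: the inequality we obtain is uniform over all pairs of points, so it holds \emph{a fortiori} for nearby ones. The $\varepsilon$-neighborhood formulation is presumably recorded in this form only for convenient application later in the paper.

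Concretely, I would first fix $m\in\mathbb{N}$ large enough that Proposition 4.4 applies, yielding constants $K_1,K_2>0$ (with $K_1$ depending on $m$) such that $K_2<\|d\phi^m_\theta\|<K_1$ for every $\theta\in SM$. Since both strict inequalities hold simultaneously at a single point, we get $K_2<K_1$, so the ratio $K_2/K_1$ lies in $(0,1)$. I then set $\beta:=K_2/K_1\in(0,1)$, which inherits its dependence on $m$ from $K_1$. For any $\theta,\tilde\theta\in SM$ — in particular for those with $d(\theta,\tilde\theta)<\varepsilon$ — I chain the upper bound applied at $\tilde\theta$ with the lower bound applied at $\theta$:
\[
\beta\,\|d\phi^m_{\tilde\theta}\| < \beta K_1 = K_2 < \|d\phi^m_\theta\|,
\]
where the first inequality is strict because $\beta>0$ and $\|d\phi^m_{\tilde\theta}\|<K_1$ strictly. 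This is exactly the asserted estimate.

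There is essentially no obstacle here: the entire content sits in Proposition 4.4, and the corollary is merely the rewriting of the uniform bounds as $\|d\phi^m_\theta\|/\|d\phi^m_{\tilde\theta}\|>K_2/K_1$. The only point requiring a word of justification is that the chosen $\beta$ indeed lies in $(0,1)$, which is immediate from $K_2<K_1$; and that $\beta$ may be taken independent of $\theta$, which holds because $K_1,K_2$ are themselves independent of $\theta$.
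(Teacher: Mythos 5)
Your proof is correct and follows essentially the same route as the paper: both arguments simply divide the two-sided uniform bound of Proposition 4.4 by itself at the two points, take $\beta = K_2/K_1$, and note (as you do, and as the paper leaves implicit) that the hypothesis $d(\theta,\tilde{\theta})<\varepsilon$ is never actually used. Your additional remark that $\beta\in(0,1)$ because $K_2<K_1$ holds at a single point is a small but welcome bit of extra care.
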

\begin{proof}
By Proposition 4.4 we have that
\begin{align*}
	\dfrac{K_2}{K_1}<\dfrac{\left\| d\phi^m_\theta\right\|}{\left\| d\phi^m_{\tilde{\theta}}\right\|} <\dfrac{K_1}{K_2}
\end{align*}
Considering $\beta=\dfrac{K_2}{K_1}=\dfrac{C^{-1}}{h(c)^m}$ the conclusion of the corollary follows. 
\end{proof} 
\section{Ruelle's Inequality}
In this section, we will prove Theorem 1.1. For this, we will adapt the idea of the proof of Ruelle's inequality for diffeomorphisms in the compact case exhibited in \cite{pesin}. \\

Let $M$ be a complete Riemannian manifold satisfying all the hypotheses of Theorem 1.1 and $\mu$ an $\phi^t$-invariant probability measure on $SM$. By simplicity, we consider $\mu$ an ergodic $\phi^t$-invariant probability measure on $SM$. In this case, we denote by $\left\lbrace \mathcal{X}_i\right\rbrace$ the Lyapunov exponents and $\left\lbrace k_i\right\rbrace$ their respective multiplicities. The proof in the non-ergodic case is a consequence of the ergodic decomposition of such a measure. We can also assume that $\phi=\phi^1$ is an ergodic transformation with respect to $\mu$. If it is not the case, we can choose an ergodic-time $\tau$ for $\mu$ and prove the theorem for the map $\phi^{\tau}$. The proof of the theorem for the map $\phi^\tau$ implies the proof for the map $\phi$ because the entropy of $\phi^\tau$ and the Lyapunov exponents are $\tau$-multiples of the respective values of $\phi$.\\

Fix $\varepsilon>0$ and $m\in \mathbb{N}$ large enough. There exists a compact set $K\subset SM$ such that $\mu(K)>1-\varepsilon$.
Based on the results in Section 4, we present the following theorem, which constitutes a similar version to the inclusion (10.3) described in \cite{pesin}. 
Consider the constants $\kappa>1$ and $0<\beta<1$ given by Proposition 4.3 and Corollary 4.5 respectively.
\begin{theorem}
Let $M$ be a complete Riemannian manifold without conjugate points and sectional curvature bounded below by $-c^2$, for some $c>0$. If the geodesic flow is of Anosov type, then for every $\theta\in K$ there exists $\varrho:=\varrho(K)\in (0,1)$ such that
\begin{align*}
\phi^m(exp_\theta(B(0,\beta \kappa^{-1}\varrho)))\subseteq exp_{\phi^m(\theta)} (d \phi^m_\theta(B(0,\varrho)).
\end{align*}
\end{theorem}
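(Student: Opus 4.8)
The plan is to pass to the exponential charts of the Sasaki metric and reduce the stated inclusion to a single metric estimate for $\phi^m$, which is then settled by the uniform comparisons built in Section 4. I first reduce the right-hand side. Since $d\phi^m_\theta$ is a linear isomorphism whose smallest singular value is exactly $\|d\phi^m_\theta\|^*$, a one-line computation gives the inclusion of balls
\[
B\bigl(0,\varrho\,\|d\phi^m_\theta\|^*\bigr)\subseteq d\phi^m_\theta\bigl(B(0,\varrho)\bigr)
\]
in $T_{\phi^m(\theta)}SM$: if $\|w\|<\varrho\|d\phi^m_\theta\|^*$ then $\|(d\phi^m_\theta)^{-1}w\|\le\|w\|/\|d\phi^m_\theta\|^*<\varrho$. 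On the other hand, $SM$ is complete by Lemma 2.1, so Hopf--Rinow guarantees that every point at distance less than $r$ from $\phi^m(\theta)$ equals $exp_{\phi^m(\theta)}$ of a vector of norm less than $r$; that is, the metric ball satisfies $B_{SM}(\phi^m(\theta),r)\subseteq exp_{\phi^m(\theta)}\bigl(B(0,r)\bigr)$. Taking $r=\varrho\,\|d\phi^m_\theta\|^*$ and chaining these two inclusions, it suffices to prove the metric estimate
\[
\phi^m\bigl(exp_\theta(B(0,\beta\kappa^{-1}\varrho))\bigr)\subseteq B_{SM}\bigl(\phi^m(\theta),\,\varrho\,\|d\phi^m_\theta\|^*\bigr).
\]

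To prove this estimate I would bound the displacement of a single point. Fix $v$ with $\|v\|<\beta\kappa^{-1}\varrho$ and let $c(s)=exp_\theta(sv)$, $s\in[0,1]$, be the radial Sasaki geodesic from $\theta$ to $exp_\theta(v)$, so that $\|c'(s)\|=\|v\|$. The curve $s\mapsto\phi^m(c(s))$ joins $\phi^m(\theta)$ to $\phi^m(exp_\theta(v))$, and the mean-value inequality gives
\[
d\bigl(\phi^m(\theta),\phi^m(exp_\theta(v))\bigr)\le\int_0^1\|d\phi^m_{c(s)}(c'(s))\|\,ds\le\|v\|\,\sup_{s\in[0,1]}\|d\phi^m_{c(s)}\|.
\]
Here I choose $\varrho$ at most the distance threshold $\varepsilon$ attached to $\beta$ in Corollary 4.5 (and, for the later volume comparison of Lemma 5.3, also at most the constant $t_0$ of Proposition 2.2 and a lower bound for the injectivity radius of $SM$ over the compact set $K$; this is the source of the dependence $\varrho=\varrho(K)$). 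Every $c(s)$ then satisfies $d(\theta,c(s))\le\|v\|<\varrho\le\varepsilon$, so Corollary 4.5 yields $\|d\phi^m_{c(s)}\|<\beta^{-1}\|d\phi^m_\theta\|$ uniformly in $s$. Combining this with $\|v\|<\beta\kappa^{-1}\varrho$ gives $d(\phi^m(\theta),\phi^m(exp_\theta(v)))<\kappa^{-1}\varrho\,\|d\phi^m_\theta\|$, and Proposition 4.3, in the form $\kappa^{-1}\|d\phi^m_\theta\|\le\|d\phi^m_\theta\|^*$, bounds the right-hand side by $\varrho\,\|d\phi^m_\theta\|^*$. This is precisely the membership in the metric ball required above, and letting $v$ range over $B(0,\beta\kappa^{-1}\varrho)$ finishes the proof.

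I expect the only delicate point to be the uniform control of $\|d\phi^m_{c(s)}\|$ along the entire connecting geodesic: the mean-value inequality is useful only after Corollary 4.5 is applied at \emph{every} $c(s)$, which forces $\varrho$ to be small enough that the radial geodesic of length $\|v\|<\varrho$ never leaves the $\varepsilon$-neighbourhood of $\theta$ on which the corollary holds. Everything else is routine: the singular-value inclusion and the Hopf--Rinow inclusion are elementary, and the two genuinely global inputs (Proposition 4.3 and Corollary 4.5) are exactly what Section 4 was designed to furnish. The hypotheses (no conjugate points, sectional curvature bounded below, Anosov geodesic flow) enter only through those uniform constants $\kappa$ and $\beta$.
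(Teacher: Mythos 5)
Your proof is correct, and it reaches the inclusion by a genuinely different logical route than the paper, even though the two analytic ingredients are the same. The paper argues by contradiction: it supposes the inclusion fails along sequences $\varrho=1/n$, extracts points $v_n$ with $\|v_n\|=\beta\kappa^{-1}/n$ and $w_n$ with $\|w_n\|=1/n$ satisfying $\phi^m(\exp_{\theta_n}(v_n))=\exp_{\phi^m(\theta_n)}(d\phi^m_{\theta_n}(w_n))$, runs the same mean-value estimate along $c_n(t)=\exp_{\theta_n}(tv_n)$ together with Corollary 4.5, and derives $\kappa\|d\phi^m_{\theta_n}\|^*<\|d\phi^m_{\theta_n}\|$, contradicting Proposition 4.3. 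That setup implicitly relies on a connectedness/boundary argument (the image of the small ball contains $\phi^m(\theta)$, so if it escapes the image of the large ball it must meet the image of the sphere of radius $1/n$) and on the injectivity radius over the compact set $K$ to identify $\|d\phi^m_{\theta_n}(w_n)\|$ with a distance; neither point is spelled out. Your direct version replaces that wrapper with two elementary inclusions, $B(0,\varrho\|d\phi^m_\theta\|^*)\subseteq d\phi^m_\theta(B(0,\varrho))$ (smallest singular value) and $B_{SM}(\phi^m(\theta),r)\subseteq \exp_{\phi^m(\theta)}(B(0,r))$ (Hopf--Rinow via Lemma 2.1), and then the identical displacement bound $d(\phi^m(\theta),\phi^m(\exp_\theta(v)))\le\|v\|\sup_s\|d\phi^m_{c(s)}\|<\beta\kappa^{-1}\varrho\cdot\beta^{-1}\|d\phi^m_\theta\|\le\varrho\|d\phi^m_\theta\|^*$. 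What this buys: an explicit, quantitative choice of $\varrho$ (any $\varrho$ below the threshold $\varepsilon$ of Corollary 4.5 works), no appeal to compactness of $K$ or to the injectivity radius for the inclusion itself, and no hidden topological step. The only caveat is the one you already flag: Corollary 4.5 must be applied at every point of the radial geodesic, which is exactly why $\varrho$ must not exceed the neighbourhood size on which that corollary is invoked.
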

\noindent
\begin{proof} We will proceed by contradiction. Suppose that for every $n\in \mathbb{N}$, there are $\theta_n\in K$ and $v_n\in T_{\theta_n}SM$ with $\left\| v_n\right\|=\dfrac{\beta \kappa^{-1}}{n}$ such that
\begin{align*}
	\phi^m(exp_{\theta_n}(v_n))=exp_{\phi^m(\theta_n)}(d\phi^m_{\theta_n}(w_n))
\end{align*}
where $\left\|w_n \right\|=\dfrac{1}{n}$. Since $K$ is compact and $w_n\rightarrow 0$, then $\left\|d\phi^m_{\theta_n}(w_n) \right\| $ is less than injectivity radius of the
exponential map restricted to the compact set $K$, for $n$ large enough by Proposition 4.4. Therefore
\begin{align*}
	\left\| d\phi^m_{\theta_n}(w_n)\right\| &=d(\phi^m(\theta_n),exp_{\phi^m(\theta_n)}(d\phi^m_{\theta_n}(w_n)))\\
&=d(\phi^m(\theta_n),\phi^m(exp_{\theta_n}(v_n)))\\
	&\leq \int_0^1\left\|(\phi^m\circ c_n)'(t) \right\| dt ,
\end{align*} 
where $c_n(t)=exp_{\theta_n}(tv_n)$. Then
\begin{align*}
	\left\| d\phi^m_{\theta_n}(w_n)\right\|&\le \sup_{t\in [0,1]}\left\|d\phi^m_{c_n(t)} \right\| \int_0^1 \left\| c'_n(t) \right\| dt\\
	&=\sup_{t\in [0,1]}\left\|d\phi^m_{c_n(t)} \right\|\cdot \left\| v_n\right\| .
\end{align*}
For $n$ large enough, by Corollary 4.5 we have that
\begin{align*}
\kappa\beta^{-1}\dfrac{\left\| d\phi^m_{\theta_n}(w_n)\right\|}{\left\| w_n\right\| }& = \dfrac{\left\| w_n\right\| }{\left\| v_n\right\| }\dfrac{\left\| d\phi^m_{\theta_n}(w_n)\right\|}{\left\| w_n\right\| }\\
	&\le \sup_{t\in [0,1]}\left\|d_{c_n(t)}\phi^m \right\|\\
	&<\beta^{-1} \left\|d\phi^m_{\theta_n} \right\|.
\end{align*}
Therefore
\begin{align*}
\kappa\beta^{-1}\left\|d\phi^m_{\theta_n} \right\|^*<\beta^{-1} \left\|d\phi^m_{\theta_n} \right\|
\end{align*}
which contradicts the Proposition 4.3. 
\end{proof}
Now, denote by $\varrho_m=\beta \kappa^{-1}\varrho<1$, where the constants $\beta, \kappa$ and $\varrho$ come from Theorem 5.1.  Using the techniques of separate sets applied in \cite{pesin} we define a finite partition $\mathcal{P}=\mathcal{P}_{K}\cup \left\lbrace SM\setminus K\right\rbrace $ of $SM$ in the following way:
\begin{itemize}
	\item[.] $\mathcal{P}_{K}$ is a partition of $K$ such that for every $X\in \mathcal{P}_{K}$, there exist balls $B(x,r')$ and $B(x,r)$ such that the constants satisfy $0<r'<r<2r'\le   \dfrac{\varrho_m}{2}$ and $$B(x,r')\subset X\subset B(x,r).$$
	\item[.] There exists a constant $\zeta>0$ such that the cardinal of $\mathcal{P}_{K}$, denoted by $\left| \mathcal{P}_{K}\right|$, satisfies $$\left| \mathcal{P}_{K}\right|\le \zeta\cdot(\varrho_m)^{-\dim(SM)}.$$
	\item[.] $h_{\mu}(\phi^m,\mathcal{P})\ge h_{\mu}(\phi^m)-\varepsilon$.
\end{itemize}
By definition of entropy,
\begin{align}\label{ruelle}
	h_{\mu}(\phi^m,\mathcal{P})&=\lim_{k\to +\infty} H_{\mu}\left( \left. \mathcal{P}\right|\phi^m\mathcal{P} \vee\ldots \vee \phi^{km}\mathcal{P} \right) \nonumber \\
	&\le H_{\mu}\left( \left. \mathcal{P}\right| \phi^m\mathcal{P}\right) \nonumber \\
	&\le \sum_{D\in \phi^m\mathcal{P}}\mu(D)\cdot\log\text{card}\left\lbrace X\in \mathcal{P}: X\cap D\neq\emptyset \right\rbrace.
\end{align}
Denote by $\varphi=\sup_{\theta\in SM}\left\|d\phi_\theta \right\|>1$. First, we estimate the number of elements $X\in \mathcal{P}$ that intersect a given element $D\in \phi^m\mathcal{P}$.
\begin{lemma}
	There exists a constant $L_1>0$ such that if $D\in \phi^m\mathcal{P}$ then
	\begin{align*}
		\emph{card} \left\lbrace X\in \mathcal{P}: X\cap D\neq\emptyset \right\rbrace \le L_1\cdot\max\left\lbrace \varphi^{m\cdot\dim (SM)}, (\varrho_m)^{-\dim(SM)} \right\rbrace .
	\end{align*}
\end{lemma}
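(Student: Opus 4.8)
The plan is to split the count according to which type of element $Y\in\mathcal P$ produces $D=\phi^m(Y)$, arranging that each of the two cases yields exactly one of the two terms in the maximum. Throughout write $n=\dim(SM)$ and recall $\varphi=\sup_{\theta}\|d\phi_\theta\|>1$, so that $\|d\phi^m_\theta\|\le\varphi^m$ for every $\theta$; since $SM$ is a complete length space (Lemma 2.1), this makes $\phi^m$ globally $\varphi^m$-Lipschitz, and in particular $\mathrm{diam}(\phi^m(A))\le\varphi^m\,\mathrm{diam}(A)$ for every set $A$.

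First I would dispose of the case $Y=SM\setminus K$. Here no expansion estimate is needed: any $D$ meets at most every element of $\mathcal P$, and the second defining property of the partition gives $|\mathcal P|\le|\mathcal P_K|+1\le\zeta\varrho_m^{-n}+1\le(\zeta+1)\varrho_m^{-n}$, using $\varrho_m<1$. This already bounds $\mathrm{card}\{X:X\cap D\neq\emptyset\}$ by $(\zeta+1)\varrho_m^{-n}$, i.e. by the second term of the maximum.

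The main case is $Y\in\mathcal P_K$, which produces the factor $\varphi^{mn}$. Since $Y\subset B(x_Y,r_Y)$ with $r_Y<\varrho_m/2$, the Lipschitz bound gives $\mathrm{diam}(D)\le\varphi^m\varrho_m$. Fixing $q_0\in D$ and using, for each cell $X\in\mathcal P_K$ meeting $D$, a point $p\in X\cap D$, every $y\in X\subset B(x_X,r_X)$ satisfies $d(q_0,y)\le\mathrm{diam}(D)+\mathrm{diam}(X)\le\varphi^m\varrho_m+\varrho_m\le 2\varphi^m\varrho_m=:R$, so all such cells lie in $B(q_0,R)$. Their inner balls $B(x_X,r'_X)\subset X$ are pairwise disjoint (the cells partition $K$) and, by the net construction of $\mathcal P_K$, satisfy $r'_X\ge a\varrho_m$ for a universal constant $a>0$. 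To count them I would transport everything to $T_{q_0}SM$: after shrinking $\varrho_m$ (that is, refining $\mathcal P$, which is compatible with all three defining properties) so that $R=2\varphi^m\varrho_m$ is below the constant $t_0$ of Proposition~2.2 and below the injectivity radius of $SM$ along the \emph{compact} set $\phi^m(K)$, the map $\exp_{q_0}$ is a diffeomorphism of $B(0,R)\subset T_{q_0}SM$ onto $B(q_0,R)$, and by Proposition~2.2 (applied to $SM$, whose sectional curvature is bounded) it is $\tfrac52$-Lipschitz there. Hence $\exp_{q_0}\big(B(u_X,\tfrac25 r'_X)\big)\subset B(x_X,r'_X)$, where $u_X=\exp_{q_0}^{-1}(x_X)$, so the preimages $B(u_X,\tfrac25 r'_X)$ are pairwise disjoint Euclidean balls inside $B(0,R)$. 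Comparing Lebesgue volumes in $T_{q_0}SM\cong\mathbb R^n$ (with $\omega_n$ the volume of the unit ball and $N$ the number of cells of $\mathcal P_K$ meeting $D$),
\[
N\,\omega_n\Big(\tfrac{2}{5}\,a\varrho_m\Big)^{n}\;\le\;\sum_{X}\omega_n\Big(\tfrac{2}{5}\,r'_X\Big)^{n}\;\le\;\omega_n\,R^{\,n}\;=\;\omega_n\,(2\varphi^m\varrho_m)^{n},
\]
the factor $\varrho_m$ cancels, giving $N\le(5/a)^n\varphi^{mn}$; adding the possible contribution of $SM\setminus K$ yields $\mathrm{card}\{X:X\cap D\neq\emptyset\}\le(5/a)^n\varphi^{mn}+1$. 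Taking $L_1=\max\{(5/a)^n+1,\zeta+1\}$ covers both cases.

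The delicate point, and the place where non-compactness must be tamed, is exactly the passage to the tangent space: I need a chart $\exp_{q_0}$ that is injective with uniformly controlled derivative on a ball whose radius $2\varphi^m\varrho_m$ does not depend on $q_0$. The uniform estimate of Proposition~2.2 supplies the $\tfrac52$-Lipschitz bound with an \emph{absolute} constant (this is what keeps $L_1$ independent of $m$, crucial for later dividing by $m$ and letting $m\to\infty$), while injectivity is obtained by localizing to the compact set $\phi^m(K)$ rather than asking for a global injectivity radius; this is precisely the volume comparison formalized through Lemma~5.3. The only genuine care required is to verify that $\varrho_m$ can indeed be chosen small enough at each fixed $m$ while retaining the three defining properties of $\mathcal P$, and that the lower bound $r'_X\ge a\varrho_m$ is uniform—both of which are features of the net-type construction of $\mathcal P_K$.
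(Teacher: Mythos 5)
Your overall strategy is the paper's: split according to whether $D=\phi^m(X')$ with $X'\in\mathcal P_K$ or $X'=SM\setminus K$, dispose of the second case by the cardinality bound $|\mathcal P_K|\le\zeta\varrho_m^{-\dim(SM)}$, and treat the first case by a volume-packing argument. The second case and the final assembly of $L_1$ match the paper exactly. The problem is in how you execute the packing step.

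The genuine gap is the passage to the tangent space at $q_0\in D\subset\phi^m(K)$. Your disjointness argument for the pulled-back balls $B(u_X,\tfrac25 r'_X)$ requires $exp_{q_0}$ to be injective on $B(0,R)$ with $R=2\varphi^m\varrho_m$, i.e.\ a lower bound on the injectivity radius of $SM$ along the compact set $\phi^m(K)$, which changes with $m$ and is not uniformly controlled on a non-compact manifold (bounded Sasaki curvature controls the conjugate radius, not short geodesic loops). Your proposed remedy --- shrinking $\varrho_m$ by refining $\mathcal P$ --- is not admissible: $\varrho_m=\beta\kappa^{-1}\varrho$ is fixed by Theorem~5.1, it appears in the statement of the lemma, and refining $\mathcal P_K$ destroys the property $|\mathcal P_K|\le\zeta\varrho_m^{-\dim(SM)}$ that your own Case~II (and the final estimate \eqref{fin}) depends on. (The requirement $R<t_0$, by contrast, is automatic for large $m$ without any shrinking, since $\varphi^m\varrho_m\le h(c)^m\varrho_m=C^{-1}\tau_1^{-1}\tau_2(1+c^2)^{-1}e^{-2cm}\varrho\to 0$; this is exactly the computation in the proof of Lemma~5.3.) A secondary, fixable issue: you invoke $r'_X\ge a\varrho_m$, which is not among the three stated properties of $\mathcal P_K$ (only $r'<r<2r'\le\varrho_m/2$ is given); it does hold for the net construction, and is implicitly needed for the cardinality property, but it must be recorded.

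The paper's proof avoids both difficulties by never leaving $SM$ and by measuring at scale $r'$ rather than $\varrho_m$: since $X'\subset B(x,2r')$, one gets $\mathrm{diam}(D)\le 4r'\varphi^m$, so every $X\in\mathcal P_K$ meeting $D$ lies in a set $W$ of diameter $<12r'\varphi^m$; then $\mathrm{vol}(W)\le A_1(r')^{\dim(SM)}\varphi^{m\dim(SM)}$ (volume of a metric ball of bounded radius in a manifold of bounded Ricci curvature) while each $X$ contains a disjoint ball of volume $\ge A_2(r')^{\dim(SM)}$ (G\"unther, with $r'$ below the injectivity radius on the fixed compact set $K$, independent of $m$). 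The factor $(r')^{\dim(SM)}$ cancels and no lower bound $r'\gtrsim\varrho_m$ and no chart at $\phi^m(\theta)$ are needed. If you keep your scale $\varrho_m$, the same direct comparison in $SM$ (Bishop--Gromov above, G\"unther below) also closes your argument; the detour through $exp_{q_0}$ is what cannot be justified as written.
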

\begin{proof}
Consider $D\in \phi^m\mathcal{P}$, then $D=\phi^m(X')$ for some $X'\in \mathcal{P}$.\\
Case I: $X'\in \mathcal{P}_{K}$.\\
By the mean value inequality
\begin{align*}
	\text{diam}(D)&=\text{diam}(\phi^m(X'))\\
	&\le \sup_{\theta\in SM} \left\|d\phi_\theta \right\|^{m}\cdot\text{diam}(X')\\
	&\le \varphi^m\cdot 4r',
\end{align*}
since $X'\subset B(x,2r')$. If $X\in \mathcal{P}_{K}$ satisfies $X\cap D\neq \emptyset$, then $X$ is contained in a $4r'$-neighborhood of $D$, denoted by $W$. Since $\varphi^m>1$ we have that
\begin{align*}
	\text{diam}(W)& \le \varphi^m\cdot 4r' + 8r'\\
	&=4r'\cdot \left( \varphi^m + 2\right)\\
 &<12r'\cdot\varphi^m.
\end{align*} 
Hence
\begin{align}\label{vol1}
	\sum_{\left\lbrace X\in \mathcal{P}_{K}:X\cap D\neq \emptyset\right\rbrace }\text{vol}(X)\le \text{vol}(W)\le A_1 \cdot (r')^{\dim (SM)} \cdot\varphi^{m\cdot\dim (SM)},
\end{align}
where $A_1>0$. Since $X\in \mathcal{P}_{K}$ contains a ball of radius $r'$, the volume of $X$ is bounded below by
\begin{align}\label{vol2}
	A_2\cdot (r')^{\dim (SM)}\le\text{vol}(X),
\end{align}
where $A_2>0$. From \eqref{vol1} and \eqref{vol2} we have that
\begin{align*}
	\text{card}\left\lbrace X\in \mathcal{P}:X\cap D\neq \emptyset\right\rbrace &\le \dfrac{A_1}{A_2} \cdot\varphi^{m\cdot\dim (SM)} +1\\
	&\le \left(\dfrac{A_1}{A_2}+1\right) \cdot\varphi^{m\cdot\dim (SM)} .
\end{align*}
\noindent
Case II: $X'=SM\setminus K$.\\
In this case, we have that 
\begin{align*}
	\text{card}\left\lbrace X\in \mathcal{P}:X\cap D\neq \emptyset\right\rbrace&\le \left| \mathcal{P}_{K}\right| +1\\
	&\le (\zeta+1)(\varrho_m)^{-\dim(SM)}.
\end{align*}
Considering $L_1=\max\left\lbrace \dfrac{A_1}{A_2}+1, \zeta+1\right\rbrace$ we obtain the desired result.
\end{proof}
Now we will get a finer exponential bound for the number of those sets $D\in \phi^m\mathcal{P}_K$ that contain regular points. For this, let $\Lambda_m$ be the set of regular points $\theta\in SM$ which satisfy the following condition: for $k\ge m$ and $\xi\in T_\theta SM$
\begin{align*}
	e^{k\left( \mathcal{X}(\theta,\xi)-\varepsilon\right) } \left\| \xi\right\| \le \left\| d\phi^k_\theta (\xi) \right\|\le e^{k\left( \mathcal{X}(\theta,\xi)+\varepsilon\right) } \left\| \xi\right\|,
\end{align*}
where $	\mathcal{X}(\theta,\xi)=\displaystyle \lim_{n\rightarrow \pm \infty} \dfrac{1}{n}\log \left\|d \phi^n_\theta(\xi) \right\|$. 
\begin{lemma}
If $D\in \phi^m\mathcal{P}_K$ has non-empty intersection with $\Lambda_m$, then there is a constant $L_2>0$ such that
\begin{align*}
	\emph{card}\left\lbrace X\in \mathcal{P}: X\cap D\neq \emptyset\right\rbrace\le L_2\cdot e^{m\varepsilon}\prod_{i:\mathcal{X}_i>0}e^{m(\mathcal{X}_i+\varepsilon)k_i}. 
\end{align*}
\end{lemma}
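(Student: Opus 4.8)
The plan is to trap $D$ inside the image under $d\phi^m$ of a single round ball, to recognise that image as an ellipsoid whose axes are governed by the Lyapunov spectrum at a regular point, and then to count the partition elements meeting $D$ by a covering estimate in which only the expanding directions contribute exponentially.

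First I would write $D=\phi^m(X')$ with $X'\in\mathcal P_K$ and, using the hypothesis $D\cap\Lambda_m\neq\emptyset$, pick a point $\theta_0\in D\cap\Lambda_m$ and set $\bar\theta:=\phi^{-m}(\theta_0)\in X'\subseteq K$. Since $X'\subset B(x,r)$ with $2r<\varrho_m$ below the injectivity radius on $K$, every point of $X'$ lies within $2r<\varrho_m$ of $\bar\theta$, so $X'\subseteq exp_{\bar\theta}(B(0,\varrho_m))$. Applying Theorem 5.1 at $\bar\theta\in K$ (recall $\varrho_m=\beta\kappa^{-1}\varrho$) then gives
\[
D=\phi^m(X')\subseteq \phi^m\bigl(exp_{\bar\theta}(B(0,\varrho_m))\bigr)\subseteq exp_{\theta_0}\bigl(d\phi^m_{\bar\theta}(B(0,\varrho))\bigr),
\]
so that $D$ sits inside the $exp_{\theta_0}$-image of the ellipsoid $\mathcal E:=d\phi^m_{\bar\theta}(B(0,\varrho))\subset T_{\theta_0}SM$.

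Next I would describe the axes of $\mathcal E$ through the Oseledec splitting $T_{\theta_0}SM=\bigoplus_i H_i(\theta_0)$. Because this splitting is $d\phi^m$-invariant and $\theta_0\in\Lambda_m$, the regularity estimates defining $\Lambda_m$ (used at negative time to control $d\phi^{-m}_{\theta_0}=(d\phi^m_{\bar\theta})^{-1}$ along the $H_i$) show that $\mathcal E$ is contained in a box adapted to this splitting whose side in the $H_i$-direction has length at most $2\varrho\,e^{m(\mathcal X_i+\varepsilon)}$, counted with multiplicity $k_i$. The single zero exponent carried by the flow direction $\langle G\rangle$ produces one factor $e^{m\varepsilon}$, the strictly negative exponents produce sides bounded by $2\varrho$, and only the positive exponents produce genuinely expanding sides. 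Each $X\in\mathcal P$ meeting $D$ lies in a fixed neighbourhood of this box and contains a ball of radius $r'$ with pairwise disjoint interiors, where $r'\asymp\varrho_m\asymp\varrho$. Since the curvature tensor and its derivative are uniformly bounded, the Sasaki curvature of $SM$ is bounded by \eqref{curvaturaSM}, and Proposition 2.2 together with the volume comparison of Lemma 5.3 let me pass between Sasaki volume on $SM$ and Lebesgue measure on $T_{\theta_0}SM$ through $exp_{\theta_0}$ with uniformly bounded distortion. Hence the number of such $X$ is bounded by the number of $r'$-balls needed to cover the box, namely
\[
\prod_i\Bigl[\max\{1,\ 2\varrho\,e^{m(\mathcal X_i+\varepsilon)}/r'\}\Bigr]^{k_i}\le L_2\,e^{m\varepsilon}\prod_{i:\mathcal X_i>0}e^{m(\mathcal X_i+\varepsilon)k_i},
\]
where $L_2$ absorbs the bounded factors $(2\varrho/r')^{\dim(SM)}$, the contracting directions, the single dimensional covering constant, and the distortion constants.

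The main obstacle is making this covering estimate rigorous so that \emph{only} the positive exponents appear. A naive volume bound would give $|\det d\phi^m_{\bar\theta}|=\prod_i e^{m\mathcal X_i k_i}$, which wrongly discounts the contracting directions; the point is that an $r'$-ball cannot be thinner than $r'$, so each non-expanding axis contributes an $O(1)$ factor rather than $e^{m\mathcal X_i}$. Turning this geometric picture into a clean count requires both the uniform comparison between intrinsic ($SM$) and linearised ($T_{\theta_0}SM$) volumes — exactly where boundedness of the Sasaki curvature and Proposition 2.2 enter — and the regularity of $\Lambda_m$ to guarantee that the ellipsoid's axes track $e^{m(\mathcal X_i+\varepsilon)}$ rather than their geometric mean. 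A secondary technical point, which I would address using the invariance of the Oseledec decomposition together with the uniform lower bound on the angle between the invariant subspaces, is the transport of the regularity estimate from $\theta_0\in\Lambda_m$ to the forward derivative $d\phi^m_{\bar\theta}$.
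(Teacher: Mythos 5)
Your proposal follows the paper's argument in all essentials: trap $X'$ in $exp_\theta(B(0,\varrho_m))$, push forward through Theorem 5.1 to enclose $D$ in the exponential image of the ellipsoid $d\phi^m(B(0,\varrho))$, bound the axes of that ellipsoid by the regularity estimates defining $\Lambda_m$ so that only the positive exponents contribute exponentially (each non-expanding axis costs $O(1)$ because every atom of $\mathcal{P}_K$ contains a ball of radius comparable to $\varrho_m$), and control the distortion of the exponential map via the bounded Sasaki curvature and Proposition 2.2. The counting step, phrased by you as covering an adapted box by small balls and by the paper as comparing $\mathrm{vol}(B_1)$ with $\varrho^{\dim(SM)}$, is the same estimate.

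The one place where your version diverges, and where as written it would fail, is the choice of regular point. You take $\theta_0\in D\cap\Lambda_m$ and propose to control $d\phi^m_{\bar\theta}=(d\phi^{-m}_{\theta_0})^{-1}$ by invoking the defining estimates of $\Lambda_m$ ``at negative time.'' But $\Lambda_m$ is defined only by two-sided bounds on $\left\|d\phi^k_\theta(\xi)\right\|$ for \emph{forward} times $k\ge m$; the forward lower bound at $\theta_0$, applied to $\xi=d\phi^m_{\bar\theta}(\eta)$, gives no control on $\left\|\xi\right\|/\left\|\eta\right\|$, and the uniform angle bound between $E^s$ and $E^u$ does not supply the missing backward regularity along the individual Oseledec subspaces $H_i$. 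The paper sidesteps this entirely by choosing the regular point $\theta$ in $X'\cap\Lambda_m$ (its stated hypothesis $D\cap\Lambda_m\neq\emptyset$ notwithstanding) and applying the forward estimate directly to $d\phi^m_\theta$; alternatively one can enlarge the definition of $\Lambda_m$ to include backward estimates, which still exhausts a full-measure set by Oseledec. Either fix is routine, but the mechanism you propose for transporting regularity from $\theta_0$ to $\bar\theta$ is not the right one.
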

\begin{proof}
 Let $X'\in \mathcal{P}_{K}$ such that $\phi^m(X')=D$ and suppose that $X'\cap \Lambda_m\neq \emptyset$. Pick a point $\theta\in X'\cap \Lambda_m$ and consider the ball $B=B(0,\varrho)\subset T_\theta SM$. We claim that $$X'\subseteq exp_\theta(B(0,\varrho_m)),$$
where $exp_\theta$ denotes the exponential map defined on the tangent plane $T_\theta SM$. In fact, let $z\in X'$. Since $SM$ is complete with the Sasaki metric (see Lemma 2.1) we can choose $w\in T_\theta SM$ such that $\gamma(t)=exp_\theta(tw)$, where $\gamma$ is a geodesic with $\gamma(0)=\theta$ and $\gamma(1)=exp_\theta(w)=z$. As diam $\mathcal{P}_{K} < \varrho_m$ then $$d(\theta,z)=l(\gamma)< \varrho_m.$$
Similar to the proof of Proposition 2.2, we obtain that
\begin{align*}
	\varrho_m> \int_0^1\left\| \gamma'(s)\right\| ds
	=\left\| w\right\| .
\end{align*}
Then $w\in B(0,\varrho_m)$ and hence
$$z=exp_{\theta}(w)\in exp_\theta(B(0,\varrho_m)).$$
Since $z\in X'$ was arbitrary, the claim is proven. Therefore, from Theorem 5.1 we have that
\begin{align*}
	D=\phi^m(X')\subseteq B_0:=exp_{\phi^m(\theta)}(\tilde{B}_0),
\end{align*}
where $\tilde{B}_0=d\phi^m_\theta(B)$ is an ellipsoid. Since the curvature tensor and the derivative of the curvature tensor of $M$ are both uniformly bounded, we have that the Sasaki sectional curvature of $SM$ is uniformly bounded (see \eqref{curvaturaSM}). This implies that the curvature tensor of $SM$ is uniformly bounded. Applying Proposition 2.2 to $SM$, there exists $t_0>0$ such that  
\begin{align}\label{novot0}
\left\|d (exp_{\phi^m(\theta)})_{tv} \right\|\le \dfrac{5}{2}    
\end{align} 
for every $\left| t\right|\le t_0$ and $v\in T_{\phi^m(\theta)}SM$ with $\left\|v \right\|=1$. Then, for $m$ large enough, we have that
\begin{align*}
	\text{diam} (D)&\le h(c)^m\cdot \text{diam} (X')\\
	&\le h(c)^m\cdot \varrho_m\\
	&=\dfrac{1}{C}\cdot\dfrac{\tau_2}{\tau_1}\cdot\dfrac{1}{1+c^2}\cdot e^{-2cm}\cdot \varrho\\
	&<\dfrac{t_0}{2}, 
\end{align*}
where $h(c)$ is the expression that bounds the derivative of  $\phi$ (see Proposition 4.4). Therefore, we can choose $B_0$ that satisfies $D\subset B_0$ and diam$(B_0)<t_0$. We know that diam$\mathcal{P}_{K}< \varrho_m<\varrho$, then if $X\in \mathcal{P}_{K}$ intersects $D$, it lies in the set
$$B_1=\left\lbrace \Psi\in SM: d(\Psi,B_0)<\varrho\right\rbrace.$$ 
Since $X\subset B(x,r)$ and $2r<\varrho_m<\varrho$, then $B(x,\varrho/2)\subset B_1$ and 
\begin{align}\label{cardi}
	\text{card}\left\lbrace X\in \mathcal{P}_{K}: X\cap D\neq \emptyset \right\rbrace \le b\cdot\text{vol}(B_1)\cdot \varrho^{-\dim(SM)},
\end{align}
for some $b>0$, where vol$(B_1)$ denotes the volume of $B_1$ induced by the Sasaki metric. Consider a subset $\tilde{B}^*_0\subset \tilde{B}_0$ such that $exp_{\phi^m(\theta)}$ is a diffeomorphism between $\tilde{B}^*_0$ and $B_0$. Since
\begin{align*}
	\left| \det d (exp_{\phi^m(\theta)})_v\right| \le \left\|d (exp_{\phi^m(\theta)})_v \right\|^{\dim(SM)}
\end{align*}
for every $v\in \tilde{B}^*_0$, from \eqref{novot0} we have that
\begin{align*}
	\text{vol}(B_0)\le \left( \dfrac{5}{2}\right) ^{\dim(SM)}\cdot \text{vol}(\tilde{B}_0).
\end{align*}
This implies that the volume of $B_1$ is bounded, up to a bounded factor, by the product of the lengths of the axes of the ellipsoid $\tilde{B}_0$. Those corresponding to non-positive Lyapunov exponents are at most sub-exponentially large. The remaining ones are of size at most $e^{m(\mathcal{X}_i+\varepsilon)}$, up to a bounded factor, for all sufficiently large $m$. Thus 
\begin{align*}
	\text{vol}(B_1)&\le A\cdot e^{m\varepsilon}\cdot(\text{diam}(B))^{\dim (SM)}\prod_{i:\mathcal{X}_i>0}e^{m(\mathcal{X}_i+\varepsilon)k_i}\\
	&\le A\cdot e^{m\varepsilon}\cdot(2\varrho)^{\dim (SM)}\prod_{i:\mathcal{X}_i>0}e^{m(\mathcal{X}_i+\varepsilon)k_i}\\
	& =\tilde{A}\cdot e^{m\varepsilon}\cdot \varrho^{\dim (SM)}\prod_{i:\mathcal{X}_i>0}e^{m(\mathcal{X}_i+\varepsilon)k_i},
\end{align*}
where $\tilde{A}=A\cdot 2^{\dim (SM)}$, for some $A>0$. Then substituting in \eqref{cardi} we have that
\begin{align*}
	\text{card}\left\lbrace X\in \mathcal{P}: X\cap D\neq \emptyset \right\rbrace &\le b\cdot\text{vol}(B_1)\cdot \varrho^{-\dim (SM)}+1\\
	&\le b\cdot\tilde{A}\cdot e^{m\varepsilon}\prod_{i:\mathcal{X}_i>0}e^{m(\mathcal{X}_i+\varepsilon)k_i}+1\\
	&\le (b\cdot\tilde{A}+1)\cdot e^{m\varepsilon}\prod_{i:\mathcal{X}_i>0}e^{m(\mathcal{X}_i+\varepsilon)k_i}.
\end{align*}
Considering $L_2=b\cdot\tilde{A}+1$ we obtain the desired result.
\end{proof}
\textbf{Proof of Theorem 1.1.} We have that $\mu(SM\setminus K)<\varepsilon$. From \eqref{ruelle}, Lemmas 5.2 and 5.3 we obtain
\begin{align*}	
	mh_{\mu}(\phi)-\varepsilon&=h_{\mu}(\phi^m)-\varepsilon \nonumber\\
	&\le h_{\mu}(\phi^m,\mathcal{P}) \nonumber\\
	&\le \sum_{D\in \phi^m\mathcal{P}}\mu(D)\cdot\log\text{card}\left\lbrace X\in \mathcal{P}: X\cap D\neq\emptyset \right\rbrace \nonumber\\
	&\le \sum_{D\in \phi^m\mathcal{P}_K, D\cap \Lambda_m=\emptyset}\mu(D)\cdot\log\text{card}\left\lbrace X\in \mathcal{P}: X\cap D\neq\emptyset \right\rbrace \nonumber\\
	&\hspace{0.5cm} +	\sum_{D\in \phi^m\mathcal{P}_K, D\cap \Lambda_m\neq \emptyset}\mu(D)\cdot\log\text{card}\left\lbrace X\in \mathcal{P}: X\cap D\neq\emptyset \right\rbrace \nonumber\\
&\hspace{0.5cm}+\mu(\phi^m(SM\setminus K))\cdot\log\text{card}\left\lbrace X\in \mathcal{P}: X\cap \phi^m(SM\setminus K)\neq\emptyset \right\rbrace \nonumber\\
&\le \sum_{D\in \phi^m\mathcal{P}_K, D\cap \Lambda_m=\emptyset}\mu(D)\left( \log (L_1) +\dim (SM)\cdot\max\left\lbrace m\log(\varphi),-\log(\varrho_m) \right\rbrace\right) \nonumber\\
	&\hspace{0.5cm}+ \sum_{D\in \phi^m\mathcal{P}_K, D\cap \Lambda_m\neq \emptyset}\mu(D)\left( \log(L_2) +m\varepsilon  +m\sum_{i:\mathcal{X}_i>0}(\mathcal{X}_i+\varepsilon)k_i\right) \nonumber \\
	& \hspace{0.5cm} + \mu(SM\setminus K)\cdot\left( \log (L_1) +\dim (SM)\cdot\max\left\lbrace m\log(\varphi),-\log(\varrho_m) \right\rbrace\right) \nonumber
 \end{align*}
 \begin{align}\label{fin}
\hspace{1.2cm}
	&\le \left( \log (L_1) +\dim (SM)\cdot\max\left\lbrace m\log\left(\varphi\right) ,-\log(\varrho_m) \right\rbrace\right)\cdot \mu(SM\setminus \Lambda_m) \nonumber\\
	&\hspace{0.5cm} + \log(L_2) +m\varepsilon  +m\sum_{i:\mathcal{X}_i>0}(\mathcal{X}_i+\varepsilon)k_i \nonumber\\
	&\hspace{0.5cm}+ \varepsilon\cdot\left( \log (L_1) +\dim (SM)\cdot\max\left\lbrace m\log\left(\varphi\right) ,-\log(\varrho_m) \right\rbrace\right).
\end{align}
By Oseledec's Theorem we have that $\mu(SM\setminus \Lambda_m)\rightarrow 0$ as $m\rightarrow \infty$. Moreover,
\begin{align*}
	\lim_{m\rightarrow +\infty}\dfrac{1}{m}\log(\varrho_m) = -\log(h(c))-2c,
\end{align*}
where $h(c)$ is the expression that bounds the derivative of $\phi$ (see Proposition 4.4). Then, dividing by $m$ in \eqref{fin} and taking $m\rightarrow +\infty$ we obtain
\begin{align*}
	h_{\mu}(\phi)\le \varepsilon + \sum_{i:\mathcal{X}_i>0}(\mathcal{X}_i+\varepsilon)k_i +\varepsilon\cdot\dim (SM)\cdot\max\left\lbrace \log\left(\varphi\right) ,\log(h(c))+2c \right\rbrace.
\end{align*}
Letting $\varepsilon\rightarrow 0$ we have
\begin{align*}
	h_{\mu}(\phi)\le  \sum_{i:\mathcal{X}_i>0}\mathcal{X}_ik_i,
\end{align*}
which is the desired upper bound. $\hfill\square$

\section{Pesin's Formula}
\noindent
In this section, we aim to prove Theorem 1.2. To achieve this goal, we will use the techniques applied by Mañé in \cite{mane} which don't use the theory of stable manifolds. Adopting this strategy greatly simplifies our proof since we only need to corroborate that all the technical hypotheses used by Mañé continue to be satisfied under the condition of the geodesic flow being Anosov. To simplify notation, we write
\begin{align*}
\mathcal{X}^+(\theta)=\sum_{\mathcal{X}_i(\theta)>0}\mathcal{X}_i(\theta)\cdot\dim (H_i(\theta)).
\end{align*}
We start introducing some notations. Set $g:SM\rightarrow SM$ a map and $\rho:SM\rightarrow (0,1)$ a function. For $\theta\in SM$ and $n\ge 0$, define
\begin{align*}
S_n(g,\rho,\theta)=\left\lbrace \omega\in SM: d(g^j(\theta),g^j(\omega))\le \rho(g^j(\theta)), 0\le j\le n\right\rbrace .
\end{align*}
If $\mu$ is a measure on $SM$ and $g$ and $\rho$ are measurable, define
\begin{align*}
h_\mu(g,\rho,\theta)=\limsup_{n\rightarrow\infty}-\dfrac{1}{n}\log\mu(S_n(g,\rho,\theta)).
\end{align*}
Let $E$ be a normed space and $E=E_1\oplus E_2$ a splitting.
We say that a subset $W\subset E$ is a $(E_1,E_2)$-graph if there exists an open set $U\subset E_2$ and a $C^1$-map $\psi:U\rightarrow E_1$ such that $W=\left\lbrace (\psi (x),x): x\in U\right\rbrace$. The number $$\sup\left\lbrace \dfrac{\left\|\psi(x)-\psi(y) \right\|}{\left\| x-y\right\| }: x,y\in U, x\neq y  \right\rbrace$$ is called the dispersion of $W$.\\

Let $M$ be a complete Riemannian manifold and $\mu$ an $\phi^t$-invariant probability measure on $SM$ satisfying the assumptions of Theorem 1.2. Denote by $\nu$ the Lebesgue measure on $SM$. Since the geodesic flow is of Anosov type, consider	$$E^{cs}(\theta)=\left\langle G(\theta)\right\rangle \oplus E^s(\theta)$$ for every $\theta\in SM$. From Theorem 3.1 there is a set $\Lambda\subset SM$ such that $\mu(SM\setminus\Lambda)=0$ and the Lyapunov exponents of $\phi$ exist for every $\theta\in \Lambda$.  Fix any $\varepsilon>0$. By Egorov's and Oseledec's Theorems, there is a compact set $K\subset \Lambda$ with $\mu(K)\ge 1-\varepsilon$ such that the splitting $T_\theta SM=E^{cs}(\theta)\oplus E^u(\theta)$ is continuous when $\theta$ varies in $K$ and, for some $N>0$, there are constants $\alpha>\beta>1$ such that, if $g=\phi^N$, the inequalities
\begin{align}\label{pesin1}
\left\|d g^n_\theta(\eta) \right\|&\ge \alpha^n\left\| \eta\right\| \nonumber \\ 
 \left\|\left.d g^n_\theta\right|_{E^{cs}(\theta)}  \right\|&\le \beta^n \nonumber\\ 
 \log\left| \det\left( \left. d g^n_\theta\right|_{E^u(\theta)} \right) \right| &\ge Nn\left(\mathcal{X}^+(\theta)-\varepsilon\right)
\end{align}
hold for all $\theta\in K$, $n\ge 0$ and $\eta\in E^u(\theta)$.\\

In the same way as in \cite{mane}, in the remainder of this section, we will treat $SM$ as if it were an Euclidean space. The arguments we use can be formalized without any difficulty by the direct use of local coordinates. Since the geodesic flow is $C^1$-Hölder, we have the following result proved by Mañe in \cite{mane}.
\begin{lemma}
For every $\sigma>0$ there is $\xi>0$ such that, if $\theta\in K$ and $g^m(\theta)\in K$ for some $m>0$, then if a set $W\subset SM$ is contained in the ball $B_{\xi^m}(\theta)$ and is a $(E^{cs}(\theta),E^u(\theta))$-graph with dispersion $\le \sigma$, then $g^m(W)$ is a $(E^{cs}(g^m(\theta)),E^u(g^m(\theta)))$-graph with dispersion $\le \sigma$.
\end{lemma}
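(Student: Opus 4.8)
The plan is to prove this cone-invariance / graph-transform estimate as a local perturbation argument, exploiting that the splitting $T_\theta SM = E^{cs}(\theta)\oplus E^u(\theta)$ is dominated (indeed Anosov) and that $g=\phi^N$ is $C^1$-H\"older, so that its derivative varies H\"older-continuously and $dg_\theta$ itself is uniformly close to its value at nearby points on a ball of rapidly shrinking radius. The key mechanism is that a $(E^{cs}(\theta),E^u(\theta))$-graph of small dispersion is the same thing as a piece of manifold whose tangent spaces lie in a narrow unstable cone around $E^u$; since $dg_\theta$ expands $E^u$ by at least $\alpha$ and contracts $E^{cs}$ by at most $\beta<\alpha$ (by \eqref{pesin1}), it maps narrow unstable cones strictly into themselves, and the dispersion of the image graph is controlled by the dispersion of the original graph multiplied by a hyperbolic contraction factor plus an error coming from the nonlinearity of $g$ and the variation of the splitting along $W$.

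First I would fix $\sigma>0$ and reduce the statement to a single application of $g$ composed $m$ times by choosing $\xi>0$ small enough that the estimate is preserved at each intermediate step. The crucial observation is that for $\theta\in K$ with $g^m(\theta)\in K$, the intermediate iterates $g^j(\theta)$ need not lie in $K$, so I cannot directly invoke the continuity of the splitting at $g^j(\theta)$; instead I would track the image graph $W_j=g^j(W)$ as an abstract graph over $E^u(g^j(\theta))$ using the \emph{reference} splitting carried along the orbit, and use that $W\subset B_{\xi^m}(\theta)$ forces $g^j(W)$ to remain inside a ball of radius roughly $\beta^{m}\xi^{m}$, hence extremely small once $\xi$ is chosen with $\beta\xi<1$. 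On such a small ball the H\"older continuity of $dg$ gives $\|dg_\omega - dg_{g^j(\theta)}\| \le L\,\mathrm{diam}(W_j)^{\gamma}$ for the H\"older exponent $\gamma$, which I make as small as I like by shrinking $\xi$. This is the standard Hadamard--Perron graph-transform computation: writing the new graph map $\psi'$ implicitly through $g$, its Lipschitz constant satisfies an inequality of the schematic form $\mathrm{disp}(W_{j+1}) \le \frac{(\beta+\text{err})\,\mathrm{disp}(W_j)+\text{err}}{\alpha-\text{err}}$, and since $\beta/\alpha<1$ one checks that the set $\{\mathrm{disp}\le\sigma\}$ is forward-invariant provided the error terms are below a threshold determined by $\sigma$, $\alpha$, $\beta$.

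The main obstacle I expect is honestly handling the fact that the hyperbolicity constants $\alpha,\beta$ in \eqref{pesin1} are guaranteed only on $K$, while the orbit segment $g^j(\theta)$ for $0<j<m$ may leave $K$, so the clean contraction $\beta$ of $E^{cs}$ and expansion $\alpha$ of $E^u$ are not available at the intermediate times. The resolution is that the Anosov property holds \emph{everywhere} on $SM$ (with the uniform constants $C,\lambda$ from the definition), and Proposition 4.4 together with the bound $\varphi=\sup_\theta\|d\phi_\theta\|$ gives a uniform global bound $\|dg_\theta\|\le\varphi^{N}$ and $\|dg_\theta^{-1}|_{E^u}\|$ controlled, so that the graph stays inside a controllably small ball and the cone field stays within the global hyperbolic cones; the genuine contraction of dispersion only needs to be recovered over the \emph{whole} interval $[0,m]$, using that the endpoints lie in $K$ and that the product of the intermediate global factors telescopes against $(\beta/\alpha)^m$. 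I would therefore state the error estimates in terms of the global constants, choose $\xi$ so that $\varphi^{N}\xi<1$ (forcing all intermediate diameters to decay geometrically and the accumulated H\"older error $\sum_j L(\varphi^{N}\xi)^{j\gamma}$ to be summable and small), and conclude that $\mathrm{disp}(g^m(W))\le\sigma$ as required.
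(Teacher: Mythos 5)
The paper does not actually prove this lemma: the text merely records that, because the flow is $C^1$-H\"older, the statement holds ``as proved by Ma\~n\'e,'' so the only thing to compare your argument with is Ma\~n\'e's original one. Your reconstruction via the Hadamard--Perron graph transform is sound in outline, and the genuinely different choice you make is to handle the intermediate iterates $g^j(\theta)$, which may leave $K$, by invoking the \emph{global} Anosov domination: $\|dg|_{E^{cs}}\|$ stays close to $1$ (the flow direction is neutral and $E^s$ is contracted) while $\|dg(\eta)\|\ge C^{-1}\lambda^{-N}\|\eta\|$ on $E^u$, the splitting is $dg$-invariant, and it is uniformly transversal by Bolton's theorem, so the one-step recursion
\begin{align*}
\mathrm{disp}(W_{j+1})\le \dfrac{(\beta_0+\mathrm{err}_j)\,\mathrm{disp}(W_j)+\mathrm{err}_j}{\alpha_0-\mathrm{err}_j}
\end{align*}
contracts at every step once the H\"older errors $\mathrm{err}_j$ are small. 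Ma\~n\'e, working in the general nonuniformly hyperbolic setting, cannot argue stepwise at points outside the Pesin block; he instead compares $g^m$ on the tiny ball $B_{\xi^m}(\theta)$ with the single linear map $dg^m_\theta$ and lets the exponentially shrinking radius $\xi^m$ absorb both the worst-case expansion $(\sup\|dg\|)^m$ and the accumulated H\"older nonlinearity, using only the cumulative estimates \eqref{pesin1} at the endpoint $\theta\in K$. Your shortcut is legitimate here precisely because the flow is Anosov and buys a simpler induction; the price is that it would not survive in the general setting from which the lemma is borrowed. Two points to repair before this counts as a complete proof: the claim that $g^j(W)$ stays in a ball of radius ``roughly $\beta^m\xi^m$'' is wrong as stated (nothing contracts a priori; the correct bound is $(\sup\|dg\|)^j\cdot\mathrm{diam}(W)\le (\varphi^N)^j\, 2\xi^m$, which your later requirement $\varphi^N\xi<1$ does make uniformly small over $0\le j\le m$, so this is a slip rather than a gap); and for the global one-step domination to be usable you must take $N$ large enough that $C\lambda^N<1$, which needs to be said explicitly since in \eqref{pesin1} the integer $N$ was fixed for a different purpose (the Oseledec--Egorov estimates on $K$), and you should also note that the uniform H\"older constant for $dg$ on the non-compact manifold $SM$ is exactly what the hypothesis ``the flow is $C^1$-H\"older'' is being used for.
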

Fix the constant $\sigma>0$ of the statement of Lemma 6.1 small enough such that exists $a\in (0,1)$, $a\le t_0/2$, where $t_0$ comes from Proposition 2.2 applied to $SM$, with the following property: if $\theta\in K$, $\omega\in SM$ and $d(\theta,\omega)<a$, then for every subspace $E\subset T_\omega SM$ which is a $(E^{cs}(\theta), E^u(\theta))$-graph with dispersion $\le \sigma$ we have
\begin{align}\label{determinante}
\left| \log \left|\det \left( \left. d g_\omega\right|_{E}\right) \right|-\log|\det ( \left. d g_\theta\right|_{E^u(\theta)} )  |  \right| \le \varepsilon.
\end{align}
We proved in Theorem 3.1 that the norm of the derivative of $\phi$ is bounded, then denote $$P=\sup\left\lbrace \log \left|\det \left. \left( d \phi_\theta\right|_E\right)   \right| : \theta\in SM, E\subset T_\theta SM\right\rbrace.  $$
The following proposition is an adaptation of Mañe's result in \cite{mane} applied to the case of Anosov geodesic flow for non-compact manifolds. To ensure a comprehensive understanding of our arguments, we chose to include the full proof provided by Mañé. 
\begin{proposition}
For every small $\varepsilon>0$, there exist a function $\rho:SM\rightarrow (0,1)$ with $\log \rho\in L^1(SM,\mu)$, an integer $N>0$ and a compact set $K'\subset SM$ with $\mu(SM\setminus K')\le 2\sqrt{\varepsilon}$ such that
\begin{align*}
h_\nu(\phi^N,\rho,\theta)\ge N\left( \mathcal{X}^+(\theta)-\varepsilon-\dfrac{\varepsilon}{N}-4P\sqrt{\varepsilon}\right)
\end{align*}
for every $\theta\in K'$.
\end{proposition}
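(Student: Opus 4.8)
The plan is to estimate the Lebesgue measure of the dynamical balls $S_n(g,\rho,\theta)$ from above by the reciprocal of the unstable Jacobian of $g^n$, so that $-\tfrac1n\log\nu(S_n(g,\rho,\theta))$ is bounded below by the unstable volume growth rate, which \eqref{pesin1} identifies with $N(\mathcal{X}^+(\theta)-\varepsilon)$. Put $g=\phi^N$ with $N$ as in \eqref{pesin1}. First I would fix $\rho$: measurable, bounded above by the constant $a\le t_0/2$ appearing in \eqref{determinante} (with $t_0$ from Proposition 2.2 applied to $SM$), and of the form $\rho(\theta)\asymp\xi^{r(\theta)}$, where $\xi$ is the scale of Lemma 6.1 and $r(\theta)$ is the gap to the next visit of the $g$-orbit to $K$. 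This makes Lemma 6.1 applicable across each excursion off $K$, and the integrability $\log\rho\in L^1(SM,\mu)$ then follows from Kac's lemma, since the entry/return time to the positive-measure set $K$ is $\mu$-integrable.

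Next I would isolate the good set. By Birkhoff's theorem applied to $\chi_{SM\setminus K}$ (recall $\mu(SM\setminus K)\le\varepsilon$) together with Markov's inequality, the set of $\theta$ whose forward $g$-orbit visits $SM\setminus K$ with asymptotic frequency exceeding $2\sqrt{\varepsilon}$ has $\mu$-measure at most $\varepsilon/(2\sqrt{\varepsilon})=\tfrac12\sqrt{\varepsilon}$; intersecting its complement with $K$ and taking a compact subset of almost full measure yields a compact $K'$ with $\mu(SM\setminus K')\le 2\sqrt{\varepsilon}$ such that for $\theta\in K'$ and $n$ large the bad-time set $B_n(\theta)=\{0\le j<n:g^j\theta\notin K\}$ satisfies $|B_n(\theta)|\le 2\sqrt{\varepsilon}\,n$.

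Then comes the central estimate: for $\theta\in K'$ I bound $\nu(S_n(g,\rho,\theta))$ from above. Since the $j=0$ condition already forces $S_n(g,\rho,\theta)\subset B_{\rho(\theta)}(\theta)$, I foliate this set by $(E^{cs}(\theta),E^u(\theta))$-graphs $W$ of dispersion $\le\sigma$, which is legitimate because $\rho\le a\le t_0/2$ keeps everything inside the scale on which Proposition 2.2 controls $\exp_\theta$. For $\omega\in W\cap S_n(g,\rho,\theta)$ every iterate $g^j\omega$ stays in $B_{\rho(g^j\theta)}(g^j\theta)$; iterating Lemma 6.1 across the excursions between consecutive visits to $K$ keeps each tangent space $T_{g^j\omega}(g^jW)$ a $\sigma$-graph at the good times and forces $g^n(W\cap S_n)\subset B_{\rho(g^n\theta)}(g^n\theta)$ to have uniformly bounded $\dim E^u$-volume. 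Writing $\text{vol}_u(g^n(W\cap S_n))=\int_{W\cap S_n}\prod_{j=0}^{n-1}\big|\det(dg|_{T_{g^j\omega}(g^jW)})\big|\,d\text{vol}_u(\omega)$, I bound the integrand below: at a good time $j$ the subspace $T_{g^j\omega}(g^jW)$ is a $\sigma$-graph and $d(g^j\theta,g^j\omega)\le\rho(g^j\theta)\le a$, so \eqref{determinante} gives a factor $\ge e^{-\varepsilon}\big|\det(dg_{g^j\theta}|_{E^u(g^j\theta)})\big|$, while at each of the at most $2\sqrt{\varepsilon}\,n$ bad times the definition of $P$ gives a crude factor $\ge e^{-NP}$. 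Using the $g$-invariance of $E^u$ to telescope, $\sum_{j\text{ good}}\log\big|\det(dg_{g^j\theta}|_{E^u})\big|=\log\big|\det(dg^n_\theta|_{E^u})\big|-\sum_{j\text{ bad}}\log\big|\det(dg_{g^j\theta}|_{E^u})\big|\ge Nn(\mathcal{X}^+(\theta)-\varepsilon)-2NP\sqrt{\varepsilon}\,n$ by \eqref{pesin1}. Combining the contributions yields $\prod_{j}\big|\det(dg|_{T(g^jW)})\big|\ge\exp\big(Nn(\mathcal{X}^+(\theta)-\varepsilon)-\varepsilon n-4NP\sqrt{\varepsilon}\,n\big)$, hence $\text{vol}_u(W\cap S_n)\le C\exp\big(-Nn(\mathcal{X}^+(\theta)-\varepsilon-\tfrac{\varepsilon}{N}-4P\sqrt{\varepsilon})\big)$. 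Integrating over the bounded transversal of plaques by Fubini gives the same bound, up to a uniform factor absorbing the exponential-map distortion of Proposition 2.2, for $\nu(S_n(g,\rho,\theta))$; taking $-\tfrac1n\log$ and $\limsup_n$ produces the claimed inequality on $K'$.

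The main difficulty is this central estimate, precisely the need to keep both the graph structure and the determinant comparison \eqref{determinante} valid along a genuine orbit that repeatedly exits $K$. The tangent spaces $T_{g^j\omega}(g^jW)$ are only transported $\sigma$-graphs rather than the invariant bundle $E^u(g^j\theta)$, so Lemma 6.1 must be invoked across each excursion between visits to $K$ — which is what dictates the exponentially small scale $\rho(\theta)\asymp\xi^{r(\theta)}$ and, through the $\mu$-integrability of the return time to $K$, secures $\log\rho\in L^1(SM,\mu)$ — while the invariance of $E^u$ is used to telescope only the good-time determinants into the clean bound of \eqref{pesin1}, the rare bad times being absorbed into the $4P\sqrt{\varepsilon}$ error via $|B_n(\theta)|\le 2\sqrt{\varepsilon}\,n$. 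Reconciling the two competing demands on $\rho$, namely being small enough on the $\xi^{r}$-scale for the graph transform yet having integrable logarithm, is the delicate point, and is exactly where the $C^1$-Hölder regularity of the flow underlying Lemma 6.1 is essential.
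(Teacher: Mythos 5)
Your proposal follows essentially the same route as the paper's proof: the same choice $\rho=\min\{a,\xi^{L(\theta)}\}$ with integrability via the return time to $K$, the same Birkhoff--Markov--Egorov construction of $K'$ with bad-time frequency $\le 2\sqrt{\varepsilon}$, the same Fubini slicing of $S_n(g,\rho,\theta)$ into unstable plaques whose images under $g^n$ are dispersion-$\sigma$ graphs of uniformly bounded volume, and the same determinant accounting combining \eqref{pesin1}, \eqref{determinante} and the crude bound $P$ at bad times to reach the $-\varepsilon n-4NP\sqrt{\varepsilon}\,n$ error. The argument is correct and matches the paper's in all essentials.
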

\begin{proof} For $\theta\in K$, define $L(\theta)$ as the minimum integer $\ge 1$ such that $g^{L(\theta)}(\theta)\in K$. This function is well defined for $\mu$-almost every $\theta\in K$ and it is integrable. Extend $L$ to $SM$, putting $L(\theta)=0$ when $\theta\notin K$ and at points of $K$ that do not return to this set. Define $\rho:SM\rightarrow (0,1)$ as
\begin{align}\label{rho}
\rho(\theta)=\min\left\lbrace a, \xi^{L(\theta)}\right\rbrace,
\end{align}
where $a\in (0,t_0/2)$ comes from property \eqref{determinante} and $\xi>0$ comes from Lemma 6.1. Since $L$ is integrable then clearly $\log\rho$ is also integrable. 
On the other hand, by Birkhoff's ergodic theorem, the function
\begin{align*}
\Psi(\theta)=\lim_{n\rightarrow +\infty}\dfrac{1}{n}\text{card}\left\lbrace 0\le j< n: g^j(\theta)\in \Lambda\setminus K\right\rbrace 
\end{align*}
is defined for $\mu$-almost every $\theta\in \Lambda$. Then
\begin{align*}
\varepsilon\ge \mu(\Lambda\setminus K)&=\int_{\Lambda} \Psi d\mu\\
&\ge \int_{\left\lbrace \theta\in \Lambda: \Psi(\theta)>\sqrt{\varepsilon}\right\rbrace }\Psi d\mu \\
&>\sqrt{\varepsilon}\cdot\mu\left( \left\lbrace \theta\in \Lambda: \Psi(\theta)>\sqrt{\varepsilon}\right\rbrace \right).
\end{align*}
Therefore,
\begin{align*}
\mu\left( \left\lbrace \theta\in \Lambda: \Psi(\theta)\le\sqrt{\varepsilon}\right\rbrace \right)\ge 1-\sqrt{\varepsilon}.
\end{align*}
By Egorov's Theorem, there exists a compact set $K'\subset K$ with $\mu(K')\ge 1-2\sqrt{\varepsilon}$ and $N_0>0$ such that, if $n\ge N_0$,
\begin{align}\label{17}
\text{card}\left\lbrace 0\le j< n: g^j(\theta)\in \Lambda\setminus K\right\rbrace \le 2n\sqrt{\varepsilon}
\end{align}
for all $\theta\in K'$. Since the subspaces $E^{cs}(\theta)$ and $E^u(\theta)$ are not necessary orthogonal, there exists $B>0$ such that
\begin{align}\label{vol}
\nu(S_n(g,\rho,\theta))\le B\int_{E^{cs}(\theta)} \nu\left(\left( \omega+E^u(\theta)\right) \cap S_n(g,\rho,\theta) \right) d\nu(\omega)
\end{align}
for every $\theta\in K'$ and $n\ge 0$, where $\nu$ also denotes the Lebesgue measure in the subspaces $E^{cs}(\theta)$ and $\omega+E^u(\theta)$. For $\omega\in E^{cs}(\theta)$, denote by
$$\Omega_n(\omega)=\left(\omega+E^u(\theta)\right) \cap S_n(g,\rho,\theta).$$
Take $D>0$ such that $D>$ vol$(W)$ for every $(E^{cs}(\theta),E^u(\theta))$-graph $W$ with dispersion $\le \sigma$ contained in $B_{\rho(\theta)}(\theta)$, where $\theta\in K'$ and $\rho$ is the function defined in \eqref{rho}. This constant exists because the domain of the graphs is contained in a ball of radius $<1$ and the derivatives of the functions defining the graphs are uniformly bounded in norm by $\sigma$. 
If $g^n(\theta)\in K'$ and $\omega\in E^{cs}(\theta)$, from Lemma $5$ of \cite{mane} we have that $g^n(\Omega_n(\omega))$ is a $(E^{cs}(g^n(\theta)),E^u(g^n(\theta)))$-graph with dispersion $\le\sigma$ and
\begin{align}\label{19}
D>\text{vol}(g^n(\Omega_n(\omega)))=\int_{\Omega_n(\omega)} \left| \det \left. dg^n_z\right|_{T_z \Omega_n(\omega)} \right| d\nu(z).
\end{align}
Fix any $\theta\in K'$ and let $S_n=\left\lbrace 0\le j<n: g^j(\theta)\in K'\right\rbrace$. If $n\ge N_0$, it follows from \eqref{pesin1}, \eqref{determinante} and \eqref{17} that for $\omega\in E^{cs}(\theta)$ we have
\begin{align*}
\log \left| \det\left( \left. dg^n_z\right|_{T_z\Omega_n(\omega)} \right) \right|&=\sum_{j=0}^{n-1}\log \left| \det\left( \left. dg_{g^j(z)}\right|_{T_{g^j(z)}g^j(\Omega_n(\omega))} \right) \right| \\
&\ge \sum_{j\in S_n}\log \left| \det\left( \left. dg_{g^j(z)}\right|_{T_{g^j(z)}g^j(\Omega_n(\omega))} \right) \right|-NP(n-\text{card}S_n)\\
&\ge \sum_{j\in S_n}\log \left| \det\left( \left. dg_{g^j(\theta)}\right|_{E^u(g^j(\theta))} \right) \right|-\varepsilon n-NP(n-\text{card}S_n)\\
&\ge \sum_{j=0}^{n-1}\log \left| \det\left( \left. dg_{g^j(\theta)}\right|_{E^u(g^j(\theta))} \right) \right|-\varepsilon n-2NP(n-\text{card}S_n)\\
&=\log\left| \det \left(\left. dg^n_\theta \right|_{E^u(\theta)}  \right)\right|  -\varepsilon n-2NP(n-\text{card}S_n)\\ 
&\ge nN(\mathcal{X}^+(\theta)-\varepsilon)-\varepsilon n-2NP(n-\text{card}S_n)\\
&\ge  nN(\mathcal{X}^+(\theta)-\varepsilon)-\varepsilon n-4NPn\sqrt{\varepsilon}.
\end{align*}
From \eqref{19} we obtain that
\begin{align*}
D>\nu(\Omega_n(\omega))\cdot \exp\left( nN(\mathcal{X}^+(\theta)-\varepsilon)-\varepsilon n-4NPn\sqrt{\varepsilon}\right) 
\end{align*}
for every $\theta\in K'$ and $\omega\in E^{cs}(\theta)$. It follows from \eqref{vol} that
\begin{align*}
\nu(S_n(g,\rho,\theta))\le B\cdot D\cdot\exp\left( -nN(\mathcal{X}^+(\theta)-\varepsilon)+\varepsilon n+4NPn\sqrt{\varepsilon}\right) .
\end{align*}
Therefore, for every $\theta\in K'$,
\begin{align*}
h_\nu(g,\rho,\theta)=\limsup_{n\rightarrow\infty}-\dfrac{1}{n}\log \nu(S_n(g,\rho,\theta))\ge N\left( \mathcal{X}^+(\theta)-\varepsilon-\dfrac{\varepsilon}{N}-4P\sqrt{\varepsilon}\right). 
\end{align*}
This completes the proof of the proposition.
\end{proof}
We will show that the function $\rho$ of Proposition 6.2 allows us to find a lower bound for the entropy of $\phi^N$. To prove this, Mañé constructed a partition of the manifold with certain properties using strongly the compactness condition (see Lemma 2 of \cite{mane}). Since the manifold $SM$ is not necessarily compact in our case, we will use another technique to construct a partition that satisfies the same properties. Consider the constant $a\in (0,1)$, $a<t_0/2$, used in property \eqref{determinante}.
\begin{lemma}
Let $M$ be a complete Riemannian manifold and suppose that the curvature tensor and the derivative of the curvature tensor are both uniformly bounded. For every $\theta\in SM$ we have that
\begin{align*}
\emph{diam}\,exp_\theta U\le \dfrac{5}{2}\cdot \emph{diam } U,    
\end{align*}
where $U\subset B(0,a)\subset T_\theta SM$. 
\end{lemma}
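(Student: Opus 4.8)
The plan is to reduce the statement to the pointwise derivative bound furnished by Proposition 2.2, applied to the manifold $SM$ itself equipped with the Sasaki metric. First I would record that $SM$ is a complete Riemannian manifold (Lemma 2.1) whose curvature tensor is uniformly bounded: this is exactly the consequence of the Sasaki curvature formula \eqref{curvaturaSM} together with the hypothesis that the curvature tensor of $M$ and its covariant derivative are uniformly bounded. Hence Proposition 2.2 applies to $N=SM$ and yields a constant $t_0>0$ — the very one used to fix $a$, so that $a<t_0/2$ — with $\left\| d(exp_\theta)_{tv}w\right\|\le \frac{5}{2}$ whenever $\left\| v\right\| =\left\| w\right\| =1$ and $\left| t\right|\le t_0$.

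Next I would fix arbitrary $u_1,u_2\in U$ and join them by the straight segment $c(s)=u_1+s(u_2-u_1)$, $s\in[0,1]$. Since $U\subset B(0,a)$ and $B(0,a)$ is convex, the whole segment stays in $B(0,a)$, so $\left\| c(s)\right\| <a<t_0$ for every $s$. Consider the curve $\alpha=exp_\theta\circ c$ in $SM$, which joins $exp_\theta u_1$ to $exp_\theta u_2$; its speed is $\left\|\alpha'(s)\right\| =\left\| d(exp_\theta)_{c(s)}(u_2-u_1)\right\|$. Writing $c(s)=\left\| c(s)\right\| v(s)$ with $v(s)$ a unit vector (and observing that $d(exp_\theta)_0$ is the identity at the possible instant $c(s)=0$), the estimate of Proposition 2.2, combined with the linearity of the differential in its vector argument, gives $\left\| d(exp_\theta)_{c(s)}(u_2-u_1)\right\| \le \frac{5}{2}\left\| u_2-u_1\right\|$.

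Integrating this bound over $s\in[0,1]$ controls the length of $\alpha$, and therefore the distance between its endpoints:
\[
d(exp_\theta u_1,exp_\theta u_2)\le \ell(\alpha)=\int_0^1\left\|\alpha'(s)\right\|\,ds\le \frac{5}{2}\left\| u_2-u_1\right\|\le \frac{5}{2}\,\text{diam } U .
\]
Taking the supremum over all $u_1,u_2\in U$ then yields $\text{diam}\,exp_\theta U\le \frac{5}{2}\,\text{diam } U$, which is the claim.

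The only point requiring care — and the main (mild) obstacle — is the legitimacy of invoking Proposition 2.2 along the segment: I must confirm that $c(s)$ never leaves the radius $t_0$ in which the derivative estimate holds (guaranteed by convexity of $B(0,a)$ together with $a<t_0/2$) and that the estimate, stated for unit directions $w$, transfers to the non-unit displacement vector $u_2-u_1$ by homogeneity of $w\mapsto d(exp_\theta)_{c(s)}w$, the degenerate instant $c(s)=0$ being absorbed by $d(exp_\theta)_0=\mathrm{id}$.
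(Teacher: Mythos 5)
Your proposal is correct and follows essentially the same route as the paper: join $u_1,u_2$ by the straight segment in $T_\theta SM$, push it forward by $exp_\theta$, bound the speed of the image curve via Proposition 2.2 applied to $SM$ (whose curvature is uniformly bounded by the Sasaki curvature formula), and integrate. The paper's proof is the same computation, including the rescaling $q(t)=s(t)w(t)$ to reduce to unit directions; your extra remarks on convexity of $B(0,a)$ and the degenerate instant $c(s)=0$ are harmless elaborations of points the paper leaves implicit.
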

\begin{proof}
Fix $\theta\in SM$ and consider $U\subset B(0,a)\subset T_\theta SM$. We need to prove that 
\begin{align*}
 d(exp_\theta u, exp_\theta v)\le \dfrac{5}{2} \left\| u-v\right\|    
\end{align*}
for every $u,v\in U$. Consider the segment $q(t)=tu + (1-t)v$ and the curve $\gamma(t)=exp_\theta q(t)$ that joins $exp_\theta u$ with $exp_\theta v$. Then
\begin{align}\label{roro}
 l(\gamma)&=\int_{0}^{1} \left\| \gamma'(t)\right\| \nonumber \\
 &=\int_{0}^{1}\left\| d(exp_\theta)_{q(t)} (u-v)\right\| dt.    
\end{align}
For each $t\in [0,1]$, there are $w(t)\in T_\theta SM$ with $\left\| w(t)\right\|=1$ and $s(t)\in \mathbb{R}$ with $\left|s(t)\right|\le t_0$ such that
\begin{align*}
 q(t)=s(t)w(t).   
\end{align*}
Since $a\le t_0/2$, from Proposition 2.2 we have that
\begin{align*}
 \left\| d(exp_\theta)_{q(t)} (u-v)\right\|&=\left\| d(exp_\theta)_{s(t)w(t)} (u-v)\right\|   \\
&\le \dfrac{5}{2} \left\|u-v\right\|.
\end{align*}
Therefore in \eqref{roro}
\begin{align*}
d(exp_\theta u, exp_\theta v)\le l(\gamma)\le  \dfrac{5}{2} \left\|u-v\right\|   
\end{align*}
completing the proof. 
\end{proof}
Consider the function $\rho:SM\rightarrow (0,1)$ defined in \eqref{rho}.
\begin{lemma}
There exists a countable partition $\mathcal{P}$ of $SM$ with finite entropy such that, if $\mathcal{P}(\theta)$ denotes the atom of $\mathcal{P}$ containing $\theta$, then
	\begin{align*}
		\emph{diam } \mathcal{P}(\theta)\le \rho(\theta)
	\end{align*}
	for $\mu$-almost every $\theta\in SM$.
\end{lemma}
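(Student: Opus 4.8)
The plan is to reduce the construction to a countable family of \emph{finite} partitions, one for each dyadic scale of $\rho$, exploiting in an essential way that $M$ (hence $SM$) has finite volume. Set $d=\dim SM$ and, for $n\ge n_0:=\lceil-\log a\rceil$, let
\begin{align*}
A_n=\left\{\theta\in SM: e^{-(n+1)}<\rho(\theta)\le e^{-n}\right\}.
\end{align*}
Since $\rho\le a<1$ by \eqref{rho}, the sets $\{A_n\}_{n\ge n_0}$ form a countable measurable partition of $SM$. Because $\log\rho\in L^1(SM,\mu)$ and $\rho<1$, on $A_n$ one has $-\log\rho\ge n$, so $n\,\mu(A_n)\le\int_{A_n}(-\log\rho)\,d\mu$ and hence $\sum_{n}n\,\mu(A_n)\le\int_{SM}(-\log\rho)\,d\mu<\infty$. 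I would first record that this already forces $H_\mu(\{A_n\})<\infty$: writing $p_n=\mu(A_n)$, split the sum $-\sum_n p_n\log p_n$ according to whether $p_n\ge e^{-n}$ or $p_n<e^{-n}$; in the first case each term is at most $n\,p_n$, and in the second, monotonicity of $x\mapsto x\log(1/x)$ on $(0,1/e)$ bounds the term by $n\,e^{-n}$, and both resulting series converge.

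Second, I would build for each $n\ge n_0$ a \emph{finite} Borel partition $\mathcal{Q}_n$ of $SM$ whose atoms have diameter $\le e^{-(n+1)}$ and whose cardinality satisfies $\#\mathcal{Q}_n\le C\,e^{(n+1)d}$ for a constant $C$ independent of $n$. Here the finite-volume hypothesis enters twice. First, a maximal $a$-separated set $\{p_1,\dots,p_J\}\subset SM$ is \emph{finite}: the balls $B(p_j,a/2)$ are disjoint and, since the Sasaki sectional curvature is uniformly bounded (see \eqref{curvaturaSM}), each has volume bounded below by a fixed positive constant, so their number is at most $\mathrm{vol}(SM)/\inf_j\mathrm{vol}(B(p_j,a/2))<\infty$; by maximality the charts $\exp_{p_j}(B(0,a))\supseteq B(p_j,a)$ cover $SM$. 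Within each chart I would tile the Euclidean ball $B(0,a)\subset T_{p_j}SM\cong\mathbb{R}^{d}$ by cubes of side $\delta_n$ chosen so that $\tfrac52\,\delta_n\sqrt d\le e^{-(n+1)}$; by Lemma 6.3 the images of these cubes under $\exp_{p_j}$ have diameter $\le\tfrac52\,\delta_n\sqrt d\le e^{-(n+1)}$, and there are $O(\delta_n^{-d})=O(e^{(n+1)d})$ of them per chart. Taking the union over the finitely many charts and disjointifying yields the desired $\mathcal{Q}_n$.

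Finally, I would set
\begin{align*}
\mathcal{P}=\left\{A_n\cap Q: n\ge n_0,\ Q\in\mathcal{Q}_n\right\}.
\end{align*}
For $\theta\in A_n$ the atom $\mathcal{P}(\theta)$ is contained in the atom of $\mathcal{Q}_n$ through $\theta$, whence $\mathrm{diam}\,\mathcal{P}(\theta)\le e^{-(n+1)}<\rho(\theta)$, giving the diameter bound for every $\theta$ (in particular $\mu$-a.e.). For the entropy, since $\mathcal{P}$ refines $\{A_n\}$,
\begin{align*}
H_\mu(\mathcal{P})=H_\mu(\{A_n\})+\sum_{n\ge n_0}\mu(A_n)\,H_{\mu_n}(\mathcal{Q}_n)\le H_\mu(\{A_n\})+\sum_{n\ge n_0}\mu(A_n)\log\#\mathcal{Q}_n,
\end{align*}
where $\mu_n$ is the normalized restriction of $\mu$ to $A_n$; using $\log\#\mathcal{Q}_n\le\log C+(n+1)d$ together with $\sum_n n\,\mu(A_n)<\infty$ this is finite. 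The main obstacle is the geometric covering estimate of the second step: one must combine finite volume with the two-sided bound on the Sasaki curvature to guarantee simultaneously that finitely many charts suffice and that the number of diameter-$e^{-(n+1)}$ atoms grows only exponentially in $n$ (equivalently, a uniform lower bound $\mathrm{vol}(B(\theta,r))\ge c\,r^{d}$ for small $r$, coming from comparison geometry); once this is in place the remaining entropy bookkeeping is routine.
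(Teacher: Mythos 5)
Your first step (finiteness of $H_\mu(\{A_n\})$ from $\sum_n n\,\mu(A_n)<\infty$) is fine and is exactly what the paper does by citing Lemma 1 of Mañé. The second step, however, contains a fatal error. You claim that for each $n$ there is a \emph{finite} partition $\mathcal{Q}_n$ of all of $SM$ into sets of diameter at most $e^{-(n+1)}$. When $M$ is non-compact and complete it is unbounded, and by inequality \eqref{comple} so is $SM$; a finite union of sets of diameter $\le e^{-(n+1)}$ is bounded, so no such partition can exist. The precise point where your justification breaks is the assertion that the disjoint balls $B(p_j,a/2)$ centered at a maximal $a$-separated set have volume uniformly bounded below. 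Bounded Sasaki curvature gives such a lower bound only up to the injectivity radius, and in a complete non-compact manifold of finite volume and bounded curvature the injectivity radius necessarily tends to $0$ at infinity (otherwise one could place infinitely many disjoint balls of uniform radius and uniformly positive volume, forcing infinite volume). Concretely, in the unit tangent bundle of a finite-volume hyperbolic surface with a cusp, balls of fixed radius deep in the cusp have volume tending to $0$ and a maximal $a$-separated set is infinite. So the ``uniform lower bound $\mathrm{vol}(B(\theta,r))\ge c\,r^d$'' that you defer to comparison geometry at the end is exactly what fails in the setting of the theorem.

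The paper avoids this by exploiting the structure of $\rho$: outside the compact set $K'$ the return-time function vanishes, hence $\rho\equiv a$ there, so only atoms of the fixed diameter $a$ are needed off $K'$; partitions of the fine diameter $e^{-(n+1)}$ are required only on the compact set $K'$, where the cardinality bound $A\,r^{-\dim SM}$ is available. The non-compact complement is then handled by a countable family of ``rings'' $V_n$ chosen so that $\mu(V_n)\le \sqrt{\varepsilon}\,2^{-(n-1)}$, each $V_n$ being covered by at most $kN_1N_2^{n-2}$ sets of diameter $\le a$ via Proposition 2.2 and Lemma 6.3, so that both $\sum_n\mu(V_n)\log(1/\mu(V_n))$ and $\sum_n\mu(V_n)\log(kN_1N_2^{n-2})$ converge. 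Any repair of your argument must replace the finite $\mathcal{Q}_n$ by such a countable partition with simultaneous control of the measures and the cardinalities of its pieces; that control is the real content of the lemma and is absent from your proposal.
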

\begin{proof} 
For each $n\ge 0$, define
\begin{align*}
	U_n=\left\lbrace \theta\in SM: e^{-(n+1)}<\rho(\theta)\le e^{-n} \right\rbrace .
\end{align*}
Since $\log \rho\in L^1(SM,\mu)$, we have that
\begin{align*}
	\sum_{n=0}^\infty n \mu(U_n)\le -\sum_{n=0}^\infty \quad\int_{U_n}\log \rho(\theta)d\mu(\theta)= -\int_{SM}\log \rho(\theta)d\mu(\theta)<\infty.
\end{align*}
Then, by Lemma 1 of \cite{mane} we obtain
\begin{align}\label{un}
	\sum_{n=0}^\infty \mu(U_n)\log \mu(U_n)<\infty.
\end{align}
For $\theta\in SM\setminus K'$ we have that $\rho(\theta)=a$. Then there exists $n_0\ge 0$ such that $$e^{-(n_0+1)}<a\le e^{-n_0}$$ and $U_n\cap (SM\setminus K')=\emptyset$ for every $n\neq n_0$. This implies that $U_n\subset K'$ for every $n\neq n_0$. Define
\begin{align*}
	U_{n_0}^*=U_{n_0}\cap K'.
\end{align*} 
Since $K'$ is compact, there exist $A>0$ and $r_0>0$ such that for all $0<r\le r_0$, there exists a partition $\mathcal{Q}_r$ of $K'$ whose atoms have diameter less than or equal to $r$ and such that the number of atoms in $\mathcal{Q}_r$, denoted by $\left| \mathcal{Q}_r\right|$, satisfies
\begin{align*}
\left| \mathcal{Q}_r\right|\le A\left( \dfrac{1}{r}\right)^{\dim (SM)}. 
\end{align*} 
Define $\mathcal{Q}$ as the partition of $K'$ given by
\begin{itemize}
    \item[.] Sets $X\cap U_n$, for $n\ge 0$, $n\neq n_0$, where $X\in \mathcal{Q}_{r_n}$ and $r_n=e^{-(n+1)}$ such that $\mu(X\cap U_n)>0$.
    \item[.] Sets $X\cap U_{n_0}^*$, where $X\in \mathcal{Q}_{r_{n_0}}$ and $r_{n_0}=e^{-(n_0+1)}$ such that $\mu(X\cap U_{n_0}^*)>0$.
\end{itemize}
On the other hand, consider $0<\varepsilon'<a/10$ such that, we can choose a measurable set (like a ``ring" covering $SM\setminus K'$) 
\begin{align*}
	V_1\subseteq\left\lbrace \theta\in SM\setminus K': d(\theta,K')\le \varepsilon'\right\rbrace:=E_1 
\end{align*}
that satisfies $$\mu(V_1)\le \sqrt{\varepsilon}.$$
Define $K'_1=K'\cup V_1$ and choose a measurable set (like a ``ring" covering $SM\setminus K'_1$)
\begin{align*}
	V_2\subseteq\left\lbrace \theta\in SM\setminus K'_1: d(\theta,K'_1)\le \varepsilon'\right\rbrace:=E_2 
\end{align*}
that satisfies $$\mu(V_2)\le \dfrac{\sqrt{\varepsilon}}{2}.$$ 
Proceeding inductively, we define bounded measurable sets 
\begin{align*}
	V_n\subseteq \left\lbrace \theta\in SM\setminus K'_{n-1}: d(\theta,K'_{n-1})\le \varepsilon'\right\rbrace:=E_n, 
\end{align*}
where $K'_{n-1}=K'\cup V_1\ldots\cup V_{n-1}$, with measure $$\mu(V_n)\le \dfrac{\sqrt{\varepsilon}}{2^{n-1}}.$$
Since
\begin{align*}
\sum_{n=1}^\infty n\mu(V_n)\le \sum_{n=1}^\infty\dfrac{n}{2^{n-1}}\cdot  \sqrt{\varepsilon}<\infty,
\end{align*}
by Lemma 1 of \cite{mane} we have that
\begin{equation}\label{vn}
\sum_{n=1}^\infty \mu(V_n)\log \mu(V_n)<\infty.
\end{equation}
Let $k$ be the number of balls of radius $a/10$ which cover $E_1$ and denote by $ B(\theta_1,a/10),\ldots,$\linebreak$ B(\theta_k,a/10)$ this covering. We claim that
\begin{align*}
E_2\subseteq \bigcup_{i=1}^k B(\theta_i,a/5).
\end{align*}
In fact, suppose that exists $\theta\in E_2$ such that $d(\theta,\theta_i)\ge a/5$, for every $i=1,\ldots,k$. By construction, there is $\omega\in E_1$ such that 
\begin{align}\label{contrad}
 d(\theta,\omega)\le \varepsilon'<\dfrac{a}{10}.  
\end{align}
Since we cover $E_1$ by balls, $\omega\in B(\theta_{i_0},a/10)$ for some $i_0\in \{ 1,\ldots,k\}$. Therefore,
\begin{align*}
d(\theta,\omega)&\ge d(\theta,\theta_{i_0})-d(\theta_{i_0},\omega)\\
&> \dfrac{a}{5} -\dfrac{a}{10}\\
&=\dfrac{a}{10},
\end{align*}
which is a contradiction with \eqref{contrad}. This proves the claim. Since $SM$ is complete (see Lemma 2.1), for each $i\in \{1,\ldots,k\}$, there is an open ball $B^i(0,a/5)\subset T_{\theta_i}SM$ such that $$exp_{\theta_i}(B^i(0,a/5))=B(\theta_i,a/5).$$ By \cite{verger} there exists $N_1:=N_1(a)>0$, which depends on the dimension of $SM$ and $a$, such that the minimal number of balls of radius $a/10$ which can cover $B^i(0,a/5)$ is bounded by $N_1$. Suppose that
\begin{align*}
B^{i}_{1},\ldots, B^{i}_{N_1}
\end{align*}
are balls of radius $a/10$ that cover $B^i(0,a/5)$. From Lemma 6.3, if we project these balls to the manifold $SM$ by the exponential map we have that $exp_{\theta_i}B^{i}_{j}$ are sets of diameter
\begin{align*}
 \text{diam } exp_{\theta_i} B^{i}_{j}\le \dfrac{5}{2}\text{diam }B^{i}_{j}=\dfrac{5}{2}\cdot \dfrac{2a}{10}=\dfrac{a}{2}.  
\end{align*}
Then we can cover $E_2$ by $k N_1$ sets of diameter $\le a/2$. Since every set of diameter $\le a/2$ is contained in a ball of radius $a/2$, we can cover $E_2$ by $kN_1$ balls $B(\omega_1,a/2),\ldots, B(\omega_{kN_1},a/2)$. Analogously, since $\varepsilon'<a/10$ we have that
\begin{align*}
E_3\subset \bigcup_{i=1}^{kN_1} B(\omega_i,6a/10).    
\end{align*}
For each $i\in \{1,\ldots,kN_1\}$, there is an open ball $B^i(0,6a/10)\subset T_{\omega_i}SM$ such that $$exp_{\omega_i}(B^i(0,6a/10))=B(\omega_i,6a/10).$$ By \cite{verger} there exists $N_2:=N_2(a)>0$, which depends on the dimension of $SM$ and $a$, such that the minimal number of balls of radius $a/10$ which can cover $B^i(0,6a/10)$ is bounded by $N_2$ and repeating the previous process we have that we can cover $E_3$ by $kN_1N_2$ balls of radius $a/2$. Continuing inductively, we obtain that $E_n$ can be covered by $kN_1N_2^{n-2}$ balls of radius $a/2$. Therefore, for every $n\ge 1$, define a partition $\hat{\mathcal{P}}_n$ of $V_n$ whose atoms have diameter $\le a$ and the number of atoms satisfies
\begin{align*}
| \hat{\mathcal{P}}_1 |	\le k, \hspace{0.6cm}|\hat{\mathcal{P}}_n |\le kN_1N_2^{n-2}, \hspace{0.3cm}\forall n\ge 2. 
\end{align*}
Finally, define the partition of $SM$ as $$\mathcal{P}=\mathcal{Q}\cup \bigcup_{n\ge 1} \hat{\mathcal{P}}_n.$$
Recalling the well-known inequality
\begin{align*}
-\sum_{i=1}^m x_i\log x_i\le \left(\sum_{i=1}^m x_i\right)\left( \log m -\log \sum_{i=1}^m x_i\right)  
\end{align*}
which holds for any set of real numbers $0<x_i\le 1$, $i=1,\ldots,m$. We claim that $H(\mathcal{P})<+\infty$. In fact, from \eqref{un} and \eqref{vn} we obtain that
\begin{align*}
H(\mathcal{P})&=\sum_{n\ge 0,\ n\neq n_0} \left( -\sum_{P\in \mathcal{Q},P\subset U_n} \mu(P)\log \mu(P)\right)+\left( -\sum_{P\in \mathcal{Q},P\subset U_{n_0}^*} \mu(P)\log \mu(P)\right)\\
&\hspace{0.5cm}+ \sum_{n\ge 1} \left( -\sum_{P\in \hat{\mathcal{P}}_n} \mu(P)\log \mu(P)\right)\\
&\le \sum_{n\ge 0,\ n\neq n_0} \mu(U_n)\left[ \log \left| \mathcal{Q}_{r_n} \right| - \log\mu(U_n) \right] +\mu(U_{n_0}^*)\left[ \log | \mathcal{Q}_{r_{n_0}} | - \log\mu(U_{n_0}^*) \right]\\ 
&\hspace{0.5cm}+\sum_{n\ge 1,} \mu(V_n)\left[ \log | \hat{\mathcal{P}}_n | - \log\mu(V_n) \right]
\end{align*}

\begin{align*}
\hspace{0.8cm}&\le \sum_{n\ge 0,\ n\neq n_0} \mu(U_n)\left[\log A +  \dim (SM)(n+1) - \log\mu(U_n)\right]  \\
& \hspace{0.6cm} +\mu(U_{n_0}^*)\left[\log A+  \dim (SM)(n_0+1)- \log\mu(U_{n_0}^*)\right]+\mu(V_1)\left[\log k -\log \mu(V_1)\right]  \\
&\hspace{0.6cm}+ \sum_{n\ge 2} \mu(V_n)\left[\log k + \log N_1 + (n-2)\log N_2- \log\mu(V_n)\right]\\
&<\infty.
\end{align*}
Moreover, if $\theta\in U_n$, for $n\ge 0, n\neq n_0$, then $\mathcal{P}(\theta)$ is contained in an atom of $\mathcal{Q}_{r_n}$ and
\begin{align*}
	\text{diam } \mathcal{P}(\theta)\le r_n= e^{-(n+1)}<\rho(\theta).
\end{align*}
 If $\theta\in U_{n_0}^*$, then $\mathcal{P}(\theta)$ is contained in an atom of $\mathcal{Q}_{r_{n_0}}$ and
\begin{align*}
	\text{diam } \mathcal{P}(\theta)\le r_{n_0}= e^{-(n_0+1)}<\rho(\theta).
\end{align*}
In another case, if $\theta\in V_n$, for $n\ge 1$, then $\mathcal{P}(\theta)$ is contained in an atom of $\mathcal{\hat{P}}_n$ and $\text{diam} \mathcal{P}(\theta)\le a=\rho(\theta)$. 
\end{proof}
Given that $M$ has finite volume, it follows that $SM$ also has finite volume. Lemma 6.4, together with the Radon-Nikodym Theorem and Shannon-McMillan-Breiman Theorem, allow us to obtain the following result proved in \cite{mane}.
\begin{proposition}  If $\mu\ll\nu$, where $\nu$ denotes the Lebesgue measure on $SM$, then 
\begin{align*}
	h_\mu(\phi^N)\ge \int\limits_{SM} h_\nu(\phi^N,\rho,\theta)d\mu(\theta).
\end{align*}  
\end{proposition}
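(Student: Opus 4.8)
The plan is to bound $\mu(\mathcal{P}^n(\theta))$ from above using the Radon-Nikodym derivative and the dispersion-graph structure, then apply the entropy formula via the Shannon-McMillan-Breiman Theorem. The point is to show that the dynamical balls $S_n(g,\rho,\theta)$ essentially contain the atoms of the refined partition $\mathcal{P}^n = \bigvee_{j=0}^{n-1} g^{-j}\mathcal{P}$, so that the growth rate $h_\nu(\phi^N,\rho,\theta)$ of the $\nu$-volume of $S_n$ controls the $\mu$-measure of $\mathcal{P}^n(\theta)$.

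First I would observe that since $\operatorname{diam}\mathcal{P}(\theta)\le\rho(\theta)$ for $\mu$-almost every $\theta$ by Lemma 6.4, the atom $\mathcal{P}^n(g,\theta)=\bigcap_{j=0}^{n-1}g^{-j}\mathcal{P}(g^j(\theta))$ is contained in the dynamical ball $S_n(g,\rho,\theta)$: indeed, if $\omega$ lies in this atom, then for each $0\le j\le n-1$ the points $g^j(\omega)$ and $g^j(\theta)$ lie in the common atom $\mathcal{P}(g^j(\theta))$, whence $d(g^j(\theta),g^j(\omega))\le\operatorname{diam}\mathcal{P}(g^j(\theta))\le\rho(g^j(\theta))$. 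Thus $\mathcal{P}^n(g,\theta)\subseteq S_n(g,\rho,\theta)$. Next I would use the hypothesis $\mu\ll\nu$ to pass from $\nu$-volume to $\mu$-measure. Writing $h=\tfrac{d\mu}{d\nu}\in L^1(\nu)$ for the Radon-Nikodym derivative, one estimates $\mu(S_n(g,\rho,\theta))=\int_{S_n(g,\rho,\theta)}h\,d\nu$; on the set where $h$ is suitably controlled (obtained via a Luzin-type or truncation argument on the integrable density $h$), this integral is comparable, up to a slowly-growing factor, to $\nu(S_n(g,\rho,\theta))$, so that
\begin{align*}
\limsup_{n\to\infty}-\frac{1}{n}\log\mu(\mathcal{P}^n(g,\theta))\ge \limsup_{n\to\infty}-\frac{1}{n}\log\nu(S_n(g,\rho,\theta))=h_\nu(g,\rho,\theta)
\end{align*}
for $\mu$-almost every $\theta$.

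Finally I would invoke the Shannon-McMillan-Breiman Theorem for the finite-entropy partition $\mathcal{P}$ of Lemma 6.4: since $H(\mathcal{P})<\infty$, the local entropy limit $\lim_{n\to\infty}-\tfrac1n\log\mu(\mathcal{P}^n(g,\theta))$ exists $\mu$-almost everywhere and its integral against $\mu$ equals $h_\mu(g,\mathcal{P})\le h_\mu(g)=h_\mu(\phi^N)$. Combining this with the pointwise lower bound from the previous step and integrating over $SM$ with respect to $\mu$ yields
\begin{align*}
h_\mu(\phi^N)\ge h_\mu(g,\mathcal{P})=\int_{SM}\lim_{n\to\infty}-\frac{1}{n}\log\mu(\mathcal{P}^n(g,\theta))\,d\mu(\theta)\ge\int_{SM}h_\nu(\phi^N,\rho,\theta)\,d\mu(\theta),
\end{align*}
which is the claim. \textbf{The main obstacle} will be justifying the comparison between $\mu$-measure and $\nu$-volume of the dynamical balls when the density $h=\tfrac{d\mu}{d\nu}$ is merely $L^1$ and not bounded below: one must ensure that $-\tfrac1n\log\mu(S_n)$ and $-\tfrac1n\log\nu(S_n)$ have the same $\limsup$ $\mu$-almost everywhere, which requires controlling the density along almost every orbit (for instance, showing $\tfrac1n\log h(g^n(\theta))\to 0$, a consequence of integrability via Birkhoff together with the invariance of $\mu$), so that the density contributes only a sub-exponential factor to the measure of the shrinking dynamical balls.
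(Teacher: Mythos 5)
Your overall architecture is the right one, and it is essentially Mañé's argument that the paper invokes: from $\mathrm{diam}\,\mathcal{P}(\theta)\le\rho(\theta)$ you get the inclusion $\mathcal{P}^n(\theta)=\bigcap_{j=0}^{n-1}g^{-j}\mathcal{P}(g^j(\theta))\subseteq S_n(g,\rho,\theta)$, and the finite entropy of $\mathcal{P}$ from Lemma 6.4 licenses Shannon--McMillan--Breiman and the bound $h_\mu(\phi^N)\ge h_\mu(g,\mathcal{P})$. The gap is precisely in the step you flag as the main obstacle, which is the only place the hypothesis $\mu\ll\nu$ is used, and your proposed repairs do not close it. You compare $\mu(S_n(g,\rho,\theta))$ with $\nu(S_n(g,\rho,\theta))$ via pointwise control of the density $h=d\mu/d\nu$; but what the chain of inequalities actually requires is an upper bound of the form $\mu(\mathcal{P}^n(\theta))\le e^{n\varepsilon}\nu(\mathcal{P}^n(\theta))$ for all large $n$ and $\mu$-a.e.\ $\theta$, and neither knowledge of $h$ at the single orbit points $g^j(\theta)$ nor a truncation to $\{h\le R\}$ (which controls $\int_{\mathcal{P}^n(\theta)\cap\{h\le R\}}h\,d\nu$ but says nothing about the complementary piece of the atom) yields this. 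Birkhoff applied to $\log h$ is also not available in general, since only $h\in L^1(\nu)$ is assumed and $\int\log h\,d\mu=\int h\log h\,d\nu$ need not converge. Finally, the sets $S_n$ do not form a countable partition, so there is no summation-over-atoms argument for them.

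The standard way to execute this step --- and the one Mañé uses --- is to compare $\mu$ and $\nu$ on the atoms of the countable partition $\mathcal{P}^n$ rather than on the dynamical balls. For $\varepsilon>0$ set $C_n=\left\lbrace\theta:\mu(\mathcal{P}^n(\theta))\ge e^{n\varepsilon}\nu(\mathcal{P}^n(\theta))\right\rbrace$; this is a union of atoms of $\mathcal{P}^n$, so $\nu(C_n)=\sum_{P\subseteq C_n}\nu(P)\le e^{-n\varepsilon}\sum_{P\subseteq C_n}\mu(P)\le e^{-n\varepsilon}$. Borel--Cantelli gives $\nu(\limsup_nC_n)=0$, and only now does $\mu\ll\nu$ enter: it transfers this to $\mu(\limsup_nC_n)=0$. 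Hence for $\mu$-a.e.\ $\theta$ and all large $n$,
$-\tfrac1n\log\mu(\mathcal{P}^n(\theta))\ge-\tfrac1n\log\nu(\mathcal{P}^n(\theta))-\varepsilon\ge-\tfrac1n\log\nu(S_n(g,\rho,\theta))-\varepsilon$,
the last inequality being your inclusion. Taking $\limsup$, integrating against $\mu$, applying Shannon--McMillan--Breiman and letting $\varepsilon\to0$ gives the proposition. So: right route and correct endpoints, but the one step that genuinely uses absolute continuity is left unjustified in your write-up, and as sketched it would not go through.
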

\textbf{Proof of Theorem 1.2.} We just need to prove that
\begin{align*}
 h_\mu(\phi) \ge  \int_{SM} \mathcal{X}^+(\theta)d\mu(\theta). 
\end{align*}
Consider $\displaystyle \Upsilon=\sup_{\theta\in SM}\left\lbrace \left\|d\phi^1_\theta \right\|, \left\|d\phi^{-1}_\theta \right\|  \right\rbrace$. Then
\begin{align*}
\int\limits_{SM\setminus {K'}}\mathcal{X}^+(\theta)d\mu(\theta)&\le \mu(SM\setminus K')\cdot\dim (SM)\cdot\log \Upsilon\\
	&\le 2\sqrt{\varepsilon} \cdot\dim (SM)\cdot\log \Upsilon.
\end{align*}
From Propositions 6.2 and 6.5 we have that
\begin{align*}
	h_\mu(\phi^N)&\ge \int_{SM}h_\nu(\phi^N,\rho,\theta)d\mu(\theta)\\
	&\ge \int_{K'} h_\nu(\phi^N,\rho,\theta)d\mu(\theta)\\
	&\ge N\int_{K'} \mathcal{X}^+(\theta)d\mu(\theta)-N\varepsilon-\varepsilon-4NP\sqrt{\varepsilon}\\
	&\ge N\int_{SM} \mathcal{X}^+(\theta)d\mu(\theta)-2\sqrt{\varepsilon}N\cdot\dim (SM)\cdot\log \Upsilon-N\varepsilon-\varepsilon-4NP\sqrt{\varepsilon}.
\end{align*}
Hence,
\begin{align*}
	h_\mu(\phi)\ge \int_{SM} \mathcal{X}^+(\theta)d\mu(\theta)-2\sqrt{\varepsilon}\cdot\dim (SM)\cdot\log \Upsilon- \varepsilon-\dfrac{\varepsilon}{N}-4P\sqrt{\varepsilon}.
\end{align*}
Letting $\varepsilon\rightarrow 0$ we obtain the desired lower bound.$\hfill\square$
\subsection*{Acknowledgments} Alexander Cantoral thanks FAPERJ for partially supporting the research (Grant  E-26/202.303/2022). Sergio Romaña thanks ``Bolsa Jovem Cientista do Nosso Estado No. E-26/201.432/2022",  NNSFC 12071202, and NNSFC 12161141002 from China. The second author thanks the Department of Mathematics of the SUSTech- China for its hospitality during the execution of this work.
	\bibliographystyle{abbrv}
	
	\bibliography{references} 
	
\end{document}